\documentclass[10pt,letterpaper,showkeys]{article}
\usepackage{amsmath}
\usepackage{amssymb}
\usepackage{authblk}
\usepackage{graphicx,subfigure}
\usepackage{graphics}
\usepackage{epsfig}
\usepackage{color}

\marginparwidth 0pt
\oddsidemargin 0pt
\evensidemargin 0pt
\marginparsep 0pt

\topmargin -.5in
\hoffset -0.1in
\textwidth 6.7 in
\textheight 8.5 in

\newtheorem{theorem}{Theorem}[section]
\newtheorem{lemma}[theorem]{Lemma}
\newtheorem{corollary}[theorem]{Corollary}

\newtheorem{example}[theorem]{Example}
\newtheorem{assumption}{Assumption}
\newtheorem{definition}{Definition}

\newcommand{\beq}{\begin{equation}}
\newcommand{\eeq}{\end{equation}}
\newcommand{\beqa}{\begin{eqnarray}}
\newcommand{\eeqa}{\end{eqnarray}}
\newcommand{\beqas}{\begin{eqnarray*}}
\newcommand{\eeqas}{\end{eqnarray*}}
\newcommand{\ba}{\begin{array}}
\newcommand{\ea}{\end{array}}
\newcommand{\bi}{\begin{itemize}}
\newcommand{\ei}{\end{itemize}}
\newcommand{\gap}{\hspace*{2em}}

\newcommand{\nn}{\nonumber}

\setcounter{page}{1}

\def\Argmin{{\rm Argmin}}

\def\vgap{\vspace*{.1in}}
\def\QED{\ifhmode\unskip\nobreak\fi\ifmmode\ifinner\else\hskip5pt\fi\fi
  \hbox{\hskip5pt\vrule width5pt height5pt depth1.5pt\hskip1pt}}
  \def\bL{{\bar L}}
\def\bt{{\bar t}}
\def\cB{{\cal B}}
\def\cI{{\cal I}}
\def\cJ{{\cal J}}
\def\cK{{\cal K}}
\def\cL{{\cal L}}

\def\cS{{\cal S}}
\def\dist{{\rm dist}}
\def\eps{{\epsilon}}
\def\feas{{\rm feas}}

\def\init{{\rm init}}
\def\M{\mathcal{M}}
\def\A{{A}}
\def\avg{{\rm avg}}
\def\b{{b}}
\def\c{{\rm c}}
\def\cC{{\cal C}}
\def\cS{{\cal S}}
\def\co{{\rm conv}}
\def\cU{{\cal U}}
\def\range{{\cal R}}
\def\rank{{\rm rank}}
\def\tL{{\widetilde \cL}}

\def\tw{{\tilde w}}
\def\tx{{\tilde x}}
\def\ut{{\underline t}}

\title{Sparse Recovery via Partial Regularization: \\ Models, Theory and Algorithms
\thanks{This work was supported in part by NSERC Discovery Grant.}
}
\author{Zhaosong Lu}
\author{Xiaorui Li}
\affil{Department of Mathematics, Simon Fraser University, Canada \\ 
\{zhaosong,xla97\}@sfu.ca}


\date{November 23, 2015}

\begin{document}

\maketitle

\begin{abstract}
In the context of sparse recovery, it is known that most of existing regularizers 
such as $\ell_1$ suffer from some bias incurred by some leading entries (in magnitude) 
of the associated vector. To neutralize this bias, we propose a class of models with partial regularizers for recovering a sparse solution of a linear system. We show that 
every local minimizer of these models is sufficiently sparse or the magnitude of all its 
nonzero entries is above a uniform constant depending only on the data 
of the linear system. Moreover, for a class of partial regularizers, any global minimizer 
of these models is a sparsest solution to the linear system. We also establish some 
sufficient conditions for local or global recovery of the sparsest solution to the linear 
system, among which one of the conditions is weaker than the best known restricted 
isometry property (RIP) condition for sparse recovery by $\ell_1$. In addition, a 
first-order feasible augmented Lagrangian (FAL) method is proposed for solving these 
models, in which each subproblem is solved by a nonmonotone proximal gradient 
(NPG) method. Despite the complication of the partial regularizers, we show that 
each proximal subproblem in NPG can be solved as a certain number of one-dimensional 
optimization problems, which usually have a closed-form solution. We also show that 
any accumulation point of the sequence generated by FAL is a first-order stationary point 
of the models. Numerical results on compressed sensing and sparse logistic regression 
demonstrate that the proposed models substantially outperform the widely used ones in 
the literature in terms of solution quality.                  

\vskip14pt

 \noindent {\bf Keywords}: sparse recovery, partial regularization, null space property, 
 restricted isometry property, proximal gradient method, augmented Lagrangian method, 
compressed sensing, sparse logistic regression

\vskip14pt
\noindent {\bf AMS subject classifications}: 65C60, 65K05, 90C26, 90C30
\end{abstract}

\section{Introduction} 
\label{intro}

Over the last decade, seeking a sparse solution to a linear system has attracted a great deal of attention in science and engineering (e.g., see  \cite{Lustig2007,Baraniuk2007,Lustig2008,Herman2009,Tropp2010,Davenport2012}). It has found numerous applications in imaging processing, signal processing and statistics. 
Mathematically, it can be formulated as the $\ell_0$ minimization problem 
\beq \label{L0}
\min_{x \in \Re^n} \{ \| x \|_0: \|A x-b\| \le \sigma\},
\eeq
where $A \in \Re^{m \times n}$ is a data matrix, $b \in \Re^m$ is an observation vector, $\sigma \ge 0$ is the noisy level (i.e., $\sigma=0$ for noiseless data and observation and  $\sigma>0$ for noisy data and/or observation), and $\| x \|_0$ denotes the cardinality of the vector $x$.

 Given that $\| \cdot \|_0$ is an integer-valued, discontinuous and nonconvex function, it is generally hard to solve \eqref{L0}. One common approach in the literature (e.g., see \cite{Ti96,ChDoSa98,Candes2005}) is to solve the following $\ell_1$ relaxation problem  
\beq \label{L1}
\min_{x \in \Re^n} \{ \| x \|_1: \|A x-b\| \le \sigma\}.
\eeq
Under some condition on $A$ such as restricted isometry property (RIP) and  
null space property (NSP), it has been shown in the literature that the solution of \eqref{L0} 
can be stably recovered by that of \eqref{L1} (e.g., see \cite{Candes2005,Don06,DoElTe06,Cai2013,Cai2014} and the references therein). It is well-known that  $\|\cdot\|_1$ is generally a biased approximation to $\|\cdot\|_0$. To neutralize the bias in $\|x\|_1$ incurred by some leading entries (in magnitude) of $x$, one can replace $\|x\|_1$ by $\sum^n_{i=r+1} |x|_{[i]}$ in \eqref{L1} and 
obtain a less biased relaxation for \eqref{L0}:    
\beq \label{partial-L1}
\min_{x \in \Re^n} \left\{ \sum^n_{i=r+1} |x|_{[i]}: \|A x-b\| \le \sigma\right\},
\eeq
where $r\in\{0,1,\ldots, n-1\}$ and $|x|_{[i]}$ is the $i$th largest element in $\{|x_1|, |x_2|, \ldots, |x_n|\}$. 
We now provide a simple example to illustrate this. 
\begin{example}
The general solution to $Ax=b$ is $x=(t,t,1-t, 2-t,3-t)^T$ for all $t \in \Re$, where 
\[
A =\left(
\begin{matrix}
1&-1&0&0&0\\
1&0&1&0&0\\
1&0&0&1&0\\
1&0&0&0&1
\end{matrix}
\right ), \quad \quad
b=\left(
\begin{matrix}
0\\
1\\
2\\
3
\end{matrix}
\right ).
\]
Thus $x^{\rm s}=(0,0,1,2,3)$ is the sparsest solution to $Ax=b$. One can verify that $x^{\rm f}=(1,1,0,1,2)^T$ is the unique 
solution of \eqref{L1} with $\sigma=0$, while $x^{\rm p}= (0,0,1,2,3)^T$ is the unique solution of \eqref{partial-L1} with $\sigma=0$ and $r=2$ or $3$. Clearly, the partial $\ell_1$ model \eqref{partial-L1} recovers successfully the sparest solution  but the full $\ell_1$ model \eqref{L1} fails.  
\end{example}

A more general relaxation of \eqref{L0} considered in the literature is in the form of 
\beq \label{phi-L1}
\min_{x \in \Re^n} \left\{ \sum^n_{i=1} \phi(|x_i|): \|A x-b\| \le \sigma\right\},
\eeq
where $\phi$ is some regularizer (e.g., see \cite{FrFr93,Fu98,FaLi01,WeElScTi03,Ch07,CaWaBo08,Ch08,Zhang09,CHZ10}).  Some popular $\phi$'s are listed as follows.
\bi
\item[(i)]  ($\ell_1$ \cite{Ti96,ChDoSa98,Candes2005}):  $\phi(t) =  t$ \ $\forall t \ge 0$;
\item[(ii)] ($\ell_q$ \cite{FrFr93,Fu98}): $\phi(t)= t^q$ \ $\forall t \ge 0$;
\item[(iii)] (Log \cite{WeElScTi03,CaWaBo08}): $\phi(t) = \log(t+\varepsilon)- \log(\varepsilon)$ \ $\forall t \ge 0$;
\item[(iv)] (Capped-$\ell_1$ \cite{Zhang09}): $\phi(t) = \left\{
\ba{ll}
t & \mbox{if} \ 0 \le t < \nu, \\
\nu & \mbox{if} \ t \ge \nu;
\ea\right.$
\item[(v)]  (MCP \cite{CHZ10}): $\phi(t) = \left\{
\ba{ll}
\lambda t -\frac{t^2}{2\alpha} &  \mbox{if} \ 0 \le t < \lambda \alpha, \\ [4pt]
\frac{\lambda^2 \alpha}{2} & \mbox{if } \ t \ge  \lambda \alpha;
\ea\right.
$
\item[(vi)]  (SCAD \cite{FaLi01}): $\phi(t) = \left\{
\ba{ll}
\lambda t &  \mbox{if} \  0 \le t \le \lambda, \\ [4pt]
\frac{-t^2+2\beta\lambda t -\lambda^2}{2(\beta-1)} &  \mbox{if} \ \lambda < t <  \lambda \beta, \\ [4pt]
\frac{(\beta+1)\lambda^2}{2} & \mbox{if } \ t \ge  \lambda \beta,
\ea\right.
$
\ei 
 where $0<q<1$, $\varepsilon>0$, $\nu>0$, $\lambda>0$, $\alpha >1$ and  $\beta>1$ are some parameters. One can observe that problem \eqref{L1} is a special case of \eqref{phi-L1} with $\phi(t)=t$ for $t\ge 0$. Analogous to $\|\cdot\|_1$, $\sum^n_{i=1} \phi(|x_i|)$ 
is generally also a biased approximation to $\|\cdot\|_0$ in some extent.  
Similar as above, one can neutralize the bias in $\sum^n_{i=1} \phi(|x_i|)$ incurred by some leading entries (in magnitude) of $x$ to obtain a less biased relaxation for \eqref{L0}:    
\beq \label{pL1}
\min_{x \in \Re^n} \left\{ \sum^n_{i=r+1} \phi(|x|_{[i]}): \|A x-b\| \le \sigma\right\}
\eeq
for some $r\in\{0,1,\ldots, n-1\}$. In this paper we are interested in problem \eqref{pL1} 
for which $\phi$ satisfies the following assumption.
\begin{assumption} \label{assump-phi}
 $\phi$ is lower semi-continuous and increasing in $[0,\infty)$. Moreover,  
$\phi(0)=0$.
\end{assumption}

Under Assumption \ref{assump-phi}, we study some theoretical properties of problem \eqref{pL1} and also develop an efficient algorithm for solving it. In particular,  we show that problem \eqref{pL1} has at least an 
optimal solution. We also establish some sparsity inducing properties of \eqref{pL1} with $\sigma=0$. It is shown that any local minimizer of \eqref{pL1} has at most $r$ 
nonzero entries or the magnitude of all its nonzero entries is not too small, which is indeed above a uniform constant depending only on $A$ and $b$. Moreover, we prove that for a class of $\phi$, any global minimizer of \eqref{pL1} is a sparsest solution to $Ax=b$. In addition, we study some sufficient conditions for local or global recovery of a sparest solution $x^*$ of $Ax=b$ by model \eqref{pL1}. It is shown that if a local null space property holds at $x^*$ for $A$ and $\phi$, $x^*$ is a {\it strictly local} minimizer of \eqref{pL1} with $\sigma=0$. Specifically, for $\phi(t)=|t|^q$ with $q\in(0,1]$, we develop some weaker 
RIP conditions for local recovery than the well-known optimal ones for \eqref{L1}. It is also shown that if a global null space property holds at $x^*$ for $A$ and $\phi$, $x^*$ is a {\it unique global} minimizer of \eqref{pL1} with $\sigma=0$. Some RIP conditions are particularly derived for global recovery by \eqref{pL1} with $\sigma=0$ and $\phi(t)=|t|^q$ with $q\in(0,1]$. Furthermore, we study stable recoverability of \eqref{pL1} with $\phi(t)=|t|$ and $\sigma \ge 0$ under some RIP conditions. Additionally, we develop a first-order feasible augmented Lagrangian (FAL) method for solving problem 
\eqref{pL1} in which each subproblem is solved by a nonmonotone proximal gradient (NPG)  method. Though the subproblems of NPG appear to be sophisticated due to $\sum^n_{i=r+1} \phi(|x|_{[i]})$, we are fortunate to show that they 
can be solved as $n-r$ number of one-dimensional optimization problems that  
have a closed-form solution for the widely used $\phi$. We also show that any accumulation point of the sequence generated by FAL is a first-order stationary point 
of problem \eqref{pL1}. Finally we conduct some numerical experiments to compare the performance of \eqref{pL1} with \eqref{phi-L1}. Numerical results demonstrate that \eqref{pL1} substantially outperforms \eqref{phi-L1} in terms of solution quality.          
      
The rest of this paper is organized as follows. 
In Section \ref{existence} we establish the existence of optimal solutions for problem \eqref{pL1}. Some sparsity inducing properties for \eqref{pL1} are studied in Section \ref{sparse-prop}. We establish some sufficient conditions for local or global recovery of a sparse solution by problem \eqref{pL1} and study its stable recoverability for in Section \ref{recovery}. We propose an efficient algorithm for solving \eqref{pL1} in 
Section \ref{alg}. Numerical results are presented in Section \ref{results}. Finally we 
make some concluding remarks in Section \ref{remarks}.   

\subsection{Notation and terminology}
\label{notation}

In this paper the set of all nonnegative real numbers is denoted by $\Re_+$.  For a real number $t$, let $t_+ = \max\{0,t\}$. Let $\Re_+^n$ denote the nonnegative orthant of $\Re^n$. For any $x,y \in \Re^n$, $x \geq y$ means $x - y \in \Re_+^n$. 
$\{0,1\}^n$ denotes the set of all $n$-dimensional vectors with binary entries. For any $x\in\Re^n$,  $\|x\|_0$ and $\|x\|$ denote the cardinality (i.e., the number of nonzero entries) and the Euclidean norm of $x$, respectively, and $\|x\|_q =\sqrt[\leftroot{-3}\uproot{3}q]{\sum\limits_{i=1}^n |x_i|^q}$ for any $q \in (0,1]$. In addition, $x_{[i]}$ denotes the $i$th largest entry of $x$ for $i = 1, \ldots, n$ and $|x|$ stands for the $n$-dimensional vector whose $i$th entry is $|x_{i}|$. Given an integer $1 \leq k \leq n$, we denote by $x_{-\max (k)}$ the vector resulted from $x$ by replacing the entries of $x$ with $k$ largest absolute value by zero, and set $x_{\max (k)} = x - x_{-\max (k)}$. A vector $x$ is said to be $k$-sparse if $\|x\|_0 \le k$. Given any $A \in \Re^{m \times n}$,  $\rank(A)$ stands for the rank of $A$, and the null space and range space of $A$ are denoted by $\mathcal{N}(A)$ and $\range(A)$, respectively. Given an index set $J\subseteq \{ 1, \ldots, n \}$, $|J|$ and $J^\c$ denote the size of $J$ and the complement of $J$ in $\{ 1, \ldots, n \}$, respectively. For any $x \in \Re^n$ and $M \in \Re^{m \times n}$, $x_J$ denotes the subvector formed by the entries of $x$ indexed by $J$, and $M_J$ denotes the submatrix formed by the columns of $M$ indexed by $J$. For any closed set $S$, let ${\rm dist}(x,S)$ denote the distance from $x$ to $S$, and ${\rm conv}(S)$ denote the convex hull of $S$.

We recall from \cite[Definition~8.3]{Rock98} that for a proper 
lower semi-continuous function $\vartheta$ (not necessarily locally Lipschitz), the limiting and horizon subdifferentials are defined respectively as
\begin{equation*}
\begin{split}
\partial \vartheta(x)&:=\left\{v\,\left|\;\exists x^k \stackrel{\vartheta}{\to} x,\;v^k\to v\;\mbox{ with }\liminf_{z \to x^k}\frac{\vartheta(z)-\vartheta(x^k)-\langle v^k,z-x^k\rangle}{\|z-x^k\|}\ge 0\quad \forall k\right.\right\},\\
\partial^{^\infty}\!\!\vartheta(x)&:=\left\{v \,\left|\;\exists x^k \stackrel{\vartheta}{\to} x,\;\lambda_k v^k\to v, \lambda_k\downarrow 0\;\mbox{ with }\liminf_{z \to x^k}\frac{\vartheta(z)-\vartheta(x^k)-\langle v^k,z-x^k\rangle}{\|z-x^k\|}\ge 0\quad \forall k\right.\right\},
\end{split}
\end{equation*}
where $\lambda_k\downarrow 0$ means $\lambda_k > 0$ and $\lambda_k\to 0$, and $x^k  \stackrel{\vartheta}{\to} x$ means both $x^k \to x$ and $\vartheta(x^k)\to \vartheta(x)$. The following concept was introduced in \cite{Candes2005} by Cand\`{e}s and Tao.  
\begin{definition} [restricted isometry constant]
Given $\A \in \Re^{m \times n}$ and an integer $1\leq k \leq n$, the restricted isometry constant (RIC) of order $k$, denoted by $\delta^A_k$,  is the smallest number $\delta$ such that for all $k$-sparse vectors $x \in \Re^n$,
\[
(1-\delta)\| x \|^2 \leq \|\A x\|^2 \leq (1+\delta)\|x\|^2.
\]
\end{definition}
We define $\delta^A_k$ as $\delta^A_{\left \lceil k \right\rceil}$ when $k>0$ is not an integer. 

\section{Existence of optimal solutions}
\label{existence}

In this section we establish the existence of optimal solutions of problem \eqref{pL1}. To this end, we first consider a more general class of problems in the form of 
\beq \label{p1}
F^* = \inf_{x \in \Re^n} \left \{F(x):= g(\A x-\b) + \Psi(x) \right \},
\eeq
where  $\A \in \Re^{m \times n}$, $\b \in \Re^m$, and $F^*$, $g$ and $\Psi$ satisfy the following assumption.

\begin{assumption} \label{assump-exist}
\bi
\item[(a)] $g: \Re^m \to \Re \cup \{\infty\}$ is lower semi-continuous in $\Re^n$ 
and any level set of $g$ is bounded.
\item[(b)] $\Psi: \Re^n \to \Re$ is bounded below,  lower semi-continuous, and moreover, 
\[
\Psi(y) \ge \Psi(z) \ \  \mbox{if}  \ \ |y| \ge |z|.
\] 
\item[(c)] $F^*$ is finite.
\ei
\end{assumption}

We next show that under Assumption \ref{assump-exist}, the infimum 
of problem \eqref{p1} is attainable at some point.

\begin{theorem} \label{th1}
Under Assumption \ref{assump-exist}, the infimum $F^*$ of problem \eqref{p1} is achievable at some point, that is, \eqref{p1} has at least an 
optimal solution. 
\end{theorem}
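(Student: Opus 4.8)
The plan is to apply the direct method (a Weierstrass-type argument): produce a minimizing sequence, improve it to a bounded one, and pass to a limit using lower semicontinuity of $F$. First I would take any $\{x^k\}\subseteq\Re^n$ with $F(x^k)\to F^*$. Since $\Psi$ is bounded below by Assumption~\ref{assump-exist}(b) and $F(x^k)$ is eventually at most $F^*+1$, the identity $g(Ax^k-b)=F(x^k)-\Psi(x^k)$ shows that $g(Ax^k-b)$ is bounded above; as $g$ has bounded level sets by Assumption~\ref{assump-exist}(a), the sequence $\{Ax^k-b\}$, and hence $y^k:=Ax^k$, stays in a bounded set. Passing to a subsequence, $y^k\to\bar y\in\range(A)$, the latter being closed.

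The obstacle is that $\{x^k\}$ need not be bounded: $g$ depends on $x$ only through $Ax$, and the monotonicity of $\Psi$ in Assumption~\ref{assump-exist}(b) does not force coercivity---$\Psi$ may even be constant along directions of $\mathcal{N}(A)$---so a minimizing sequence can run off to infinity inside the null space. To neutralize this, I would replace each $x^k$ by $\hat x^k$, the Euclidean projection of the origin onto the set $\{z:Az=y^k,\ |z|\le|x^k|\}$. This set is convex and compact and contains $x^k$, so $\hat x^k$ is well defined; moreover $A\hat x^k=y^k$ and $|\hat x^k|\le|x^k|$, whence the monotonicity hypothesis gives $\Psi(\hat x^k)\le\Psi(x^k)$ and thus $F(\hat x^k)\le F(x^k)$. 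Consequently $\{\hat x^k\}$ is again a minimizing sequence, now lying in the corresponding boxes.

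The key step is to show that $\{\hat x^k\}$ is bounded, which I would prove by contradiction. If $\|\hat x^k\|\to\infty$ along a subsequence, then, since $A\hat x^k=y^k$ stays bounded, a further subsequence yields $\hat x^k/\|\hat x^k\|\to d$ with $\|d\|=1$ and $Ad=0$. Projecting off this direction, $p^k:=\hat x^k-\langle\hat x^k,d\rangle d$ still satisfies $Ap^k=y^k$ and has strictly smaller norm than $\hat x^k$; the point is that the coordinates carrying the blow-up are exactly those with $d_i\ne0$, where $|x^k_i|\ge|\hat x^k_i|$ grows like $\|\hat x^k\|$, while $|p^k_i|$ remains of lower order, so $p^k$ lies in the box $|z|\le|x^k|$ for all large $k$. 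This contradicts the minimal-norm choice of $\hat x^k$, proving boundedness.

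Finally, a bounded $\{\hat x^k\}$ has a subsequence converging to some $\bar x$ with $A\bar x=\bar y$. Since $g$ and $\Psi$ are lower semicontinuous, so is $F=g(A\,\cdot-b)+\Psi$, giving $F(\bar x)\le\liminf_k F(\hat x^k)=F^*$; as $F(\bar x)\ge F^*$ by definition, $\bar x$ attains the infimum. I expect the boundedness argument in the third paragraph to be the crux: it is precisely where the monotonicity assumption is indispensable, since it is what lets the box-truncation preserve the objective while the minimal-norm projection defeats any escape into $\mathcal{N}(A)$.
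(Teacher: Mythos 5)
Your proposal is correct, and it reaches the conclusion by a genuinely different mechanism at the crucial step. Both arguments share the same skeleton: from a minimizing sequence $\{x^k\}$, Assumptions \ref{assump-exist}(b), (c) and (a) give boundedness of $\{Ax^k\}$; one then replaces $x^k$ by a point with the same image under $A$ and componentwise no larger absolute value, so that the monotonicity of $\Psi$ keeps the sequence minimizing; boundedness of the new sequence plus lower semicontinuity of $F$ finishes the proof. The difference is how the bounded replacement is produced. The paper partitions the coordinates into the set $J$ on which $\{x^k_j\}$ is bounded and its complement, and invokes Hoffman's lemma \cite{Hoffman1952} for the system $A_{J^\c} z = A_{J^\c} x^k_{J^\c}$ to obtain solutions $z^k$ with $\|z^k\| \le \zeta \|A_{J^\c} x^k_{J^\c}\|$, so that the replacement $\tilde x^k = (x^k_J, z^k)$ is bounded by construction. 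You instead take $\hat x^k$ to be the minimum-norm point of the compact convex set $\{z: Az = Ax^k,\ |z|\le |x^k|\}$ and rule out $\|\hat x^k\|\to\infty$ by contradiction: an asymptotic direction $d\in\mathcal{N}(A)$ of the blow-up yields $p^k=\hat x^k-\langle \hat x^k,d\rangle d$ with $Ap^k=Ax^k$ and $\|p^k\|<\|\hat x^k\|$ for large $k$, and $p^k$ stays in the box because $p^k_i=o(\|\hat x^k\|)$ while $|x^k_i|\ge|\hat x^k_i|\sim |d_i|\,\|\hat x^k\|$ when $d_i\ne 0$, whereas $p^k_i=\hat x^k_i$ when $d_i=0$; this contradicts minimality of $\|\hat x^k\|$. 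Your route is self-contained (no external error bound is cited), and it also quietly avoids a small imprecision in the paper's version: the paper infers $|x^k_{J^\c}|\ge |\tilde x^k_{J^\c}|$ for all sufficiently large $k$ from mere unboundedness of the coordinates in $J^\c$, which strictly speaking requires first passing to a subsequence along which those coordinates actually diverge, while your inequality $|\hat x^k|\le |x^k|$ holds for every $k$ by construction. What Hoffman's lemma buys the paper is a quantitative bound with a constant depending only on $A_{J^\c}$; your argument is purely qualitative, which is all that existence requires.
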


\begin{proof}
By the definition of $F^*$, there exists a sequence $\{x^k\}$ such that 
\beq \label{F-seq}
\lim\limits_{k \to \infty}  F(x^k) =  \lim\limits_{k \to \infty} g(\A x^k - \b) + \Psi(x^k) = F^*.
\eeq
By Assumption \ref{assump-exist}(b) and \ref{assump-exist}(c), we know that $\{\Psi(x^k)\}$ is bounded below and $F^*$ is finite. These together with \eqref{F-seq} imply that $\{g(\A x^k - \b)\}$ is 
bounded above. Using this and Assumption \ref{assump-exist}(a), we conclude that 
$\{Ax^k - b\}$ is bounded and so is $\{Ax^k\}$. Let 
$J = \{j: \{x_j^k\} {\rm{\ is\ bounded}} \}$. It then follows that $\{A_Jx^k_J\}$ is bounded. This together with the boundedness of $\{Ax^k\}$ implies that $\{A_{J^\c}x^k_{J^\c}\}$ is bounded. 
Consider the linear system 
\[
A_{J^\c} z= A_{J^\c}x^k_{J^\c}.
\]
Clearly it has at least a solution $z=x^k_{J^\c}$.  By Hoffman Lemma \cite{Hoffman1952},  there 
exist $z^k$ and some constant $\zeta>0$ depending only on $A_{J^\c}$ such that  
\beq \label{hoffman}
A_{J^\c} z^k= A_{J^\c}x^k_{J^\c}, \quad \|z^k\| \le \zeta \|A_{J^\c}x^k_{J^\c}\|.
\eeq
Let $\tx^k=(x^k_J, z^k)$. Using \eqref{hoffman}, the boundedness of 
 $\{A_{J^\c}x^k_{J^\c}\}$, 
and the definitions of $J$ and $\tx^k$, one can observe that $A \tx^k = Ax^k$ and $\{\tx^k\}$ is 
bounded. Considering a convergent subsequence if necessary, assume for convenience that 
$\tx^k \to x^*$ for some $x^*$. By the definition of $F$ and the lower semi-continuity of $g$ and $\Psi$, we can see that $F$ is lower semi-continuous and hence 
\beq \label{F-limit}
\liminf_{k\to \infty} F(\tx^k) \ge F(x^*). 
\eeq  
 In addition, by the definition of $J$, we know that $\{x^k_{J^\c}\}$ is 
unbounded. This along with the boundedness of $\{\tx^k_{J^\c}\}$ implies $|x_{J^\c}^k| \geq |\tx_{J^\c}^k|$ for sufficiently large $k$. It follows from this and $\tx_J^k = x_J^k$ that $|x^k| \ge |\tx^k|$ for sufficiently large $k$. This and Assumption \ref{assump-exist}(b) implies that $\Psi(x^k) \geq \Psi(\tx^k)$ for sufficiently large $k$. By this relation, $A\tx^k=Ax^k$ and \eqref{p1}, we have that for sufficiently large $k$,
\[
F(x^k) = g(Ax^k-b)+\Psi(x^k) \ge g(A\tx^k-b)+\Psi(\tx^k) = F(\tx^k).
\]
In view of this, \eqref{F-seq} and \eqref{F-limit}, one has 
\[
F^* = \lim\limits_{k \to \infty}  F(x^k) \ge \liminf_{k\to \infty} F(\tx^k) \ge F(x^*).
\]
This along with the definition of $F^*$ implies $F(x^*)=F^*$ and hence $x^*$ is an optimal solution of problem \eqref{p1}.
\end{proof}

\gap 

As a consequence of the above theorem, we next establish the existence of optimal solutions of problem \eqref{pL1}. 

\begin{theorem} \label{exist-soln}
Suppose that Assumption \ref{assump-phi} holds for $\phi$ and the feasible region of  \eqref{pL1} is nonempty. Then problem \eqref{pL1} has at least an optimal solution.  
\end{theorem}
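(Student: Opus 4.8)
The plan is to realize problem \eqref{pL1} as a special instance of the general problem \eqref{p1} and then invoke Theorem \ref{th1}. To this end I would set $\Psi(x) = \sum_{i=r+1}^n \phi(|x|_{[i]})$ and take $g$ to be the indicator function of the closed ball $B = \{y \in \Re^m : \|y\| \le \sigma\}$, that is, $g(y) = 0$ if $\|y\| \le \sigma$ and $g(y) = \infty$ otherwise. With this choice, $F(x) = g(\A x - \b) + \Psi(x)$ equals $\Psi(x)$ whenever $\|\A x-\b\|\le\sigma$ and $+\infty$ otherwise, so that minimizing $F$ over $\Re^n$ is exactly problem \eqref{pL1}. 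It then suffices to verify that this $g$ and $\Psi$ satisfy Assumption \ref{assump-exist}.

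Checking Assumption \ref{assump-exist}(a) is immediate: $g$ is the indicator of the closed set $B$, hence lower semi-continuous, and every level set of $g$ equals either $\emptyset$ or $B$, which is bounded. For the bounded-below and lower-semicontinuity requirements in Assumption \ref{assump-exist}(b), I would use Assumption \ref{assump-phi}: since $\phi$ is increasing on $[0,\infty)$ with $\phi(0)=0$, each summand satisfies $\phi(|x|_{[i]}) \ge \phi(0) = 0$, so $\Psi \ge 0$; and since the sorting map $x \mapsto |x|_{[i]}$ is continuous while $\phi$ is lower semi-continuous, each composition $x \mapsto \phi(|x|_{[i]})$ is lower semi-continuous, whence the finite sum $\Psi$ is lower semi-continuous.

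The monotonicity condition $\Psi(y) \ge \Psi(z)$ whenever $|y| \ge |z|$ is the one point that needs a small lemma, and it is the step I expect to be the main obstacle. The key fact is that componentwise domination of nonnegative vectors is preserved under nonincreasing rearrangement: if $|y| \ge |z|$, then $|y|_{[i]} \ge |z|_{[i]}$ for every $i$. This follows from the min-max representation $a_{[i]} = \max_{|S|=i}\min_{j\in S} a_j$, since $\min_{j\in S}|y_j| \ge \min_{j\in S}|z_j|$ for each index set $S$. Combining this with the monotonicity of $\phi$ gives $\phi(|y|_{[i]}) \ge \phi(|z|_{[i]})$ for each $i$, and summing over $i = r+1,\ldots,n$ yields $\Psi(y)\ge\Psi(z)$.

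Finally, for Assumption \ref{assump-exist}(c), the nonemptiness of the feasible region of \eqref{pL1} supplies a point $\bar x$ with $\|\A\bar x - \b\|\le\sigma$; then $F^* \le F(\bar x) = \Psi(\bar x) < \infty$, while $F^* \ge 0$ because $F\ge\Psi\ge0$, so $F^*$ is finite. With all hypotheses of Theorem \ref{th1} in place, \eqref{p1} attains its infimum at some $x^*$; since $F(x^*) = F^* < \infty$ forces $g(\A x^* - \b) < \infty$, the point $x^*$ lies in $B$ and is therefore feasible for, and hence optimal to, \eqref{pL1}. Apart from the rearrangement lemma underlying the monotonicity of $\Psi$, every step is a direct verification of the hypotheses of Theorem \ref{th1}.
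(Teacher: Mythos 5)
Your proposal is correct and takes essentially the same route as the paper's own proof: both realize \eqref{pL1} as the special case of \eqref{p1} with $g$ the indicator function of $\{w\in\Re^m:\|w\|\le\sigma\}$ and $\Psi(x)=\sum_{i=r+1}^n\phi(|x|_{[i]})$, and then invoke Theorem \ref{th1}. The only difference is that the paper dismisses the verification of Assumption \ref{assump-exist} as routine, whereas you spell it out—including the rearrangement fact that $|y|\ge|z|$ implies $|y|_{[i]}\ge|z|_{[i]}$ for all $i$—and your verification is accurate.
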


\begin{proof}
Let $\Omega=\{w\in\Re^m: \|w\| \le \sigma\}$ and $\iota_\Omega$ be the indicator function of $\Omega$, that is, $\iota_\Omega(w)=0$ if  $w \in \Omega$ and $\infty$ 
otherwise. One can observe that \eqref{pL1} is a special case of \eqref{p1} with 
$g=\iota_\Omega$ and $\Psi(\cdot)=\sum^n_{i=r+1} \phi(|\cdot|_{[i]})$. Since $\phi$ 
satisfies Assumption \ref{assump-phi}, it is not hard to verify that  Assumption \ref{assump-exist} holds for such $g$ and $\Psi$. The conclusion then follows from Theorem \ref{th1}. 
\end{proof}

\gap

The following model has been widely used in the literature for 
sparse recovery:
\beq
\min\limits_{x\in\Re^n} \ \lambda \|Ax-b\|^2 + \Psi(x), \label{mod1}\\ 
\eeq
 where $\lambda>0$ and $\Psi$ is a sparsity-inducing regularizer 
such as $\ell_0$ \cite{BlDa08,BlDa09,LuZh13,Lu14},  $\ell_1$ \cite{Ti96,ChDoSa98,Candes2005}), $\ell_q$  \cite{FrFr93,Fu98}, 
Capped-$\ell_1$ \cite{Zhang09}, Log \cite{WeElScTi03,CaWaBo08}, MCP \cite{CHZ10} and SCAD \cite{FaLi01} that are presented in Section \ref{intro}.  It is not hard to observe that \eqref{mod1} is a special case of 
\eqref{p1} with $g(\cdot) = \lambda\|\cdot\|^2_2$ for \eqref{mod1}.  
The following result thus immediately follows from  Theorem \ref{th1}. 
 
\begin{corollary}
Problem \eqref{mod1} has at least an optimal solution if $\Psi$ is  one of the regularizers $\ell_0$, $\ell_1$,  $\ell_q$,   Capped-$\ell_1$, Log,  MCP and SCAD.
\end{corollary}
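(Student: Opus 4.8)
The plan is to recognize problem \eqref{mod1} as an instance of the general problem \eqref{p1} and then to verify that Assumption \ref{assump-exist} holds for the resulting data, so that the conclusion follows at once from Theorem \ref{th1}. Concretely, I would set $g(\cdot)=\lambda\|\cdot\|^2$, so that the objective of \eqref{mod1} reads $F(x)=g(Ax-b)+\Psi(x)$, which is exactly the form treated in \eqref{p1}. It thus suffices to check parts (a), (b) and (c) of Assumption \ref{assump-exist} for this $g$ and for each of the seven regularizers $\Psi$.

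For part (a), I would note that $g=\lambda\|\cdot\|^2$ with $\lambda>0$ is continuous, hence lower semi-continuous, and that every level set $\{w:\lambda\|w\|^2\le c\}$ is a (possibly empty) Euclidean ball and therefore bounded. For part (b), I would observe that each of $\ell_1$, $\ell_q$, Capped-$\ell_1$, Log, MCP and SCAD can be written as $\Psi(x)=\sum_{i=1}^n\phi(|x_i|)$ for a scalar $\phi$ satisfying Assumption \ref{assump-phi}, i.e. lower semi-continuous, increasing on $[0,\infty)$, with $\phi(0)=0$. Since such a $\phi$ is increasing with $\phi(0)=0$, it is nonnegative, so $\Psi\ge 0$ is bounded below; lower semi-continuity of each coordinate map $x\mapsto\phi(|x_i|)$ (a composition of the lsc increasing $\phi$ with the continuous $|\cdot|$) yields lower semi-continuity of $\Psi$; and $|y|\ge|z|$ gives $|y_i|\ge|z_i|$ for every $i$, whence $\phi(|y_i|)\ge\phi(|z_i|)$ by monotonicity and $\Psi(y)\ge\Psi(z)$ after summation. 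For the remaining choice $\Psi=\|\cdot\|_0$, nonnegativity and lower semi-continuity of $\|\cdot\|_0$ are standard, and $|y|\ge|z|$ forces $\{i:z_i\ne 0\}\subseteq\{i:y_i\ne 0\}$, so $\|z\|_0\le\|y\|_0$; hence (b) holds in all seven cases. For part (c), since $g\ge 0$ and $\Psi\ge 0$ we have $F\ge 0$, so $F^*\ge 0$, while evaluating at $x=0$ gives $F(0)=\lambda\|b\|^2+\Psi(0)<\infty$; thus $F^*$ is finite.

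With Assumption \ref{assump-exist} verified, Theorem \ref{th1} immediately produces an optimal solution of \eqref{mod1}. I expect no genuine obstacle here: the only point requiring a little care is the monotonicity and lower semi-continuity bookkeeping for the nonsmooth regularizers ($\ell_0$, Capped-$\ell_1$, MCP and SCAD), but each reduces to the elementary componentwise observations above, and the three piecewise-defined regularizers are in fact continuous (hence lsc) by matching the one-sided values at their breakpoints.
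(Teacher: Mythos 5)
Your proposal is correct and matches the paper's own argument: the paper likewise treats \eqref{mod1} as the special case of \eqref{p1} with $g(\cdot)=\lambda\|\cdot\|^2$ and invokes Theorem \ref{th1}, leaving the verification of Assumption \ref{assump-exist} implicit. Your componentwise checks of parts (a), (b) and (c) for the seven regularizers simply fill in the details the paper omits, and they are all sound.
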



\section{Sparsity inducing property}
\label{sparse-prop}

In this section we study some sparsity inducing properties of model \eqref{pL1} with $\sigma=0$, that is, 
\beq\label{p2}
\min\limits_{x\in\Re^n}\left\{\sum\limits_{i=r+1}^n \phi (|x|_{[i]}): 
A x -b=0\right\}.
\eeq
In particular, we show that any local minimizer of \eqref{p2} is $r$-sparse or the magnitude of all its nonzero entries cannot be too small, which is indeed above a uniform constant depending only on $A$ and $b$.  We also show that under some assumption on $\phi$, any global minimizer of \eqref{p2} is a sparsest solution to the linear system $\A x = \b$. 

Before proceeding, we introduce some notations that will be used subsequently.  Given any $x\in\Re^n$ and $j \in\{1,\ldots,n\}$, we define 
\beq \label{index}
\ba{ll}
\cI_0(x) = \{ i: x_i=0 \},\quad  & \cI^\c_0(x) = \{ i: x_i \neq 0 \}, \\ [5pt]
\cI_j^=(x) = \{ i: |x_i| = |x|_{[j]} \}, \quad & \cI^>_j(x) = \{ i: |x_i| > |x|_{[j]} \}, \\ [5pt]
 \cI^<_j(x) = \{ i: |x_i| < |x|_{[j]} \}, \quad & \cI^{<+}_j(x) = \{ i: 0<|x_i| < |x|_{[j]} \}, \\ [5pt]
\cI^\le_j(x) = \{ i: |x_i| \leq |x|_{[j]} \}, \quad &  \cI^{\le+}_j(x) = \{ i: 0 < |x_i| \le |x|_{[j]} \}. 
\ea
\eeq
 It is not hard to observe that 
\[
\ba{ll}
 \cI^>_j(x) \cup \cI^=_j(x) \cup \cI^<_j(x) =\{1,\ldots, n\}, &
 I^\c_0(x)=I^{\le+}_j(x) \cup I^>_j(x), \\ [6pt]
 \cI^\le_j(x) =  \cI^=_j(x) \cup \cI^<_j(x), & 
 \cI^\le_j(x) =  \cI_0(x) \cup \cI^{\le+}_j(x).
\ea
\]

\vgap

The following lemma characterizes some nonzero components of a local minimizer of problem \eqref{p2}.
  
\begin{lemma} \label{lem:lowerbnd}
Assume $\phi''(t)<0$ for all $t>0$. Suppose $x^*$ is a local minimizer of problem \eqref{p2}. Let 
$\cI^>_{r+1}=\cI^>_{r+1}(x^*)$, $\cI^{\le+}_{r+1}=\cI^{\le+}_{r+1}(x^*)$, $\cJ$ a subset of $\cI^>_{r+1}$ 
such that $\rank(\A_\cJ) = \rank(\A_{\cI^>_{r+1}})$ and $\A_\cJ$ has full column rank, and 
$B = [A_{\cI^{\le+}_{r+1}} \ A_\cJ] $, where $\cI^>_{r+1}(x^*)$ and $\cI^{\le+}_{r+1}(x^*)$ are defined in 
\eqref{index}. Then $x^*$ is $r$-sparse or the following statements hold:
\bi
\item[(i)] $B$ has full column rank;
\item[(ii)] $x^*_{\cI^{\le+}_{r+1}} = [ e_1 \cdots e_l]^T (B^T B)^{-1} B^T \b$, where $l = |\cI^{\le+}_{r+1}|$ and $e_i$ is the $i$th coordinate vector for all $i$.
\end{itemize}
\end{lemma}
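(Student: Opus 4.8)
The plan is to reduce the statement to part (i), since part (ii) then follows quickly. Assume $x^*$ is not $r$-sparse, so that $\mu := |x^*|_{[r+1]} > 0$ and $\cI^{\le+}_{r+1}$ is nonempty; write $l = |\cI^{\le+}_{r+1}|$. Granting (i), I would deduce (ii) as follows. The zero entries of $x^*$ contribute nothing, so feasibility gives $\b = \A x^* = \A_{\cI^{\le+}_{r+1}} x^*_{\cI^{\le+}_{r+1}} + \A_{\cI^>_{r+1}} x^*_{\cI^>_{r+1}}$. Since $\cJ \subseteq \cI^>_{r+1}$ and $\rank(\A_\cJ) = \rank(\A_{\cI^>_{r+1}})$, we have $\range(\A_\cJ) = \range(\A_{\cI^>_{r+1}})$, so $\A_{\cI^>_{r+1}} x^*_{\cI^>_{r+1}} = \A_\cJ w$ for some $w$, whence $\b = B\,(x^*_{\cI^{\le+}_{r+1}}; w) \in \range(B)$. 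Full column rank of $B$ makes this the unique solution, equal to $(B^TB)^{-1}B^T\b$, and extracting its first $l$ coordinates via $[e_1\cdots e_l]^T$ yields (ii).

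The core is therefore (i), which I would prove by contradiction using a feasible perturbation. If $B$ were rank-deficient there is a nonzero $(u;v)$ with $\A_{\cI^{\le+}_{r+1}} u + \A_\cJ v = 0$; since $\A_\cJ$ has full column rank, $u = 0$ would force $v = 0$, so necessarily $u \neq 0$. Define a direction $d$ supported on $\cI^{\le+}_{r+1}\cup\cJ$ by $d_{\cI^{\le+}_{r+1}} = u$ and $d_\cJ = v$, so that $\A d = B(u;v) = 0$ and $x(t) := x^* + t d$ is feasible for all $t$. Let $h(t)$ be the objective of \eqref{p2} at $x(t)$. Every entry moved by $d$ is nonzero at $x^*$, so for small $|t|$ each magnitude $|x_i(t)|$ is affine in $t$ (no sign change), the strictly largest entries $\cI^>_{r+1}$ stay strictly largest, the entries below $\mu$ stay below, and only the magnitude-$\mu$ entries (all of which lie in $\cI^{\le+}_{r+1}$) reorder among themselves. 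Consequently the $r$ excluded largest terms consist of $\cI^>_{r+1}$ together with the $r-p$ largest magnitude-$\mu$ entries, where $p = |\cI^>_{r+1}|$; and because the $\cJ$-part of $d$ moves only excluded entries, $h$ depends on $t$ solely through $u$.

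The key technical step is to show that $h$ is concave near $0$. Writing $q$ for the number of magnitude-$\mu$ entries and $k = q - (r-p) \ge 1$ for how many are retained (here $q \ge r-p+1$ because the sorted magnitudes in positions $p+1,\dots,r+1$ all equal $\mu$), $h(t)$ is a sum of terms $\phi(|x^*_i| + t\,\mathrm{sign}(x^*_i)u_i)$ over the below-$\mu$ indices plus the sum of the $k$ smallest of the terms $\phi(\mu + t\,\mathrm{sign}(x^*_i)u_i)$ over the magnitude-$\mu$ indices. Each such term is concave in $t$ since $\phi'' < 0$, and the map sending a vector to the sum of its $k$ smallest coordinates is concave and nondecreasing, so composing it with these concave functions yields a concave function; adding the plainly concave below-$\mu$ sum, $h$ is concave on a neighborhood of $0$. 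Feasibility of $x(t)$ and local minimality of $x^*$ make $t=0$ an interior local minimum of the concave $h$, forcing $h$ to be constant near $0$.

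It remains to contradict $u \neq 0$. I would compute the one-sided second derivatives $h''(0^+)$ and $h''(0^-)$: each is a sum of terms $\phi''(\cdot)(\mathrm{sign}(x^*_i)u_i)^2 \le 0$ taken over the below-$\mu$ indices together with, respectively, the $k$ indices of smallest and of largest slope among the magnitude-$\mu$ entries. Constancy of $h$ forces both to vanish, and $\phi'' < 0$ then forces $u_i = 0$ on all below-$\mu$ indices and on both the $k$ smallest- and the $k$ largest-slope magnitude-$\mu$ indices; a short sign argument (the $k$ smallest slopes vanishing makes every slope nonnegative, the $k$ largest vanishing makes every slope nonpositive) gives $u = 0$, the desired contradiction. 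I expect the main obstacle to be the bookkeeping around ties at level $\mu$: making precise that for small $|t|$ the excluded set changes only by reordering the magnitude-$\mu$ entries, and that the retained objective is exactly a sum of the $k$ smallest such terms, so that both the concavity and the second-derivative computation are rigorously justified.
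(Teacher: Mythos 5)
Your proof is correct, and your part (ii) (feasibility, the range identity $\range(\A_\cJ)=\range(\A_{\cI^>_{r+1}})$, and the least-squares formula via full column rank) is exactly the paper's argument; but your proof of the key part (i) takes a genuinely different route. The paper handles the ties at the threshold magnitude by a majorization trick: near $x^*$ the objective $\sum_{i=r+1}^n\phi(|x|_{[i]})$ is bounded above by the smooth weighted function $\sum_{i\in\cI^<_{r+1}(x^*)}\phi(|x_i|)+(t_2/t_1)\sum_{i\in\cI^=_{r+1}(x^*)}\phi(|x_i|)$ (a minimum over equal-size subsets of the tied terms is at most their average), with equality at $x^*$; hence $x^*$ is a local minimizer of this weighted problem subject to $Ax=b$, and the standard second-order necessary condition for equality-constrained minimization, combined with $\phi''<0$, forces every null-space vector of $[A_{\cI^{\le+}_{r+1}}\ A_{\cI^>_{r+1}}]$ to vanish on $\cI^{\le+}_{r+1}$ in one stroke. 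You instead argue by contradiction along a single feasible line $x^*+td$: you keep the sum-of-$k$-smallest structure intact, obtain concavity of the restricted objective from the composition rule (a nondecreasing concave function of concave functions is concave), conclude it must be constant near $t=0$, and then recover $u=0$ from the two one-sided second derivatives together with the sign argument on the tied slopes. What each approach buys: the paper's averaging device is shorter and delegates the second-order work to standard constrained-optimization theory, whereas yours is elementary and self-contained, never invoking second-order conditions for constrained problems, at the price of the tie bookkeeping you flag at the end --- bookkeeping your argument does resolve correctly, since for small $t>0$ (resp.\ $t<0$) the retained tied terms are precisely those with the $k$ smallest (resp.\ largest) slopes, so the one-sided second derivatives are exactly the sums you state, and their vanishing forces all slopes to be simultaneously nonnegative and nonpositive. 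Both proofs run on the same engine --- strict concavity of $\phi$ along feasible directions supported on the nonzero entries of $x^*$ --- but the mechanisms for exploiting it are distinct.
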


\begin{proof}
If $x^*$ is a $r$-sparse solution to $Ax=b$, the conclusion clearly holds. We now suppose that $x^*$ is not a $r$-sparse solution to $Ax=b$. It implies that  $\cI^{\le+}_{r+1} \neq \emptyset$ and 
$|x^*_i|>0$ for all $i\in \cI^=_{r+1}(x^*)$. We next show that statements (i) and (ii) hold. 

(i) One can observe from \eqref{index} that when 
$x$ is sufficiently close to $x^*$, 
\beq \label{index-relation}
\cI^<_{r+1}(x^*) \subseteq \cI^<_{r+1}(x), \quad  \cI^>_{r+1}(x^*) \subseteq \cI^>_{r+1}(x).
\eeq
Let $t_1 = |\cI^=_{r+1}(x^*)|$, $t_2=n-r- |\cI^<_{r+1}(x^*)|$ and 
\[
\quad \cS = \left\{\cK: \cK \subseteq \cI^=_{r+1}(x^*), \ |\cK|=t_2\right\}.
\]
Observe that $t_1,\ t_2>0$ and $\cS \neq \emptyset$. 
Using \eqref{index-relation} and the definitions of $t_1$, $t_2$ and $\cS$,  we can observe that when $x$ is sufficiently close 
to $x^*$, 
\beq
\sum\limits_{i=r+1}^n \phi (|x|_{[i]}) = \sum\limits_{i\in \cI^<_{r+1}(x^*)} \phi (|x_i|) +\min\limits_{\cK \in \cS} \sum\limits_{i\in\cK}  \phi (|x_i|) 
 \le  \sum\limits_{i\in \cI^<_{r+1}(x^*)} \phi (|x_i|) + \frac{t_2}{t_1} \sum\limits_{i\in \cI^=_{r+1}(x^*)}  \phi (|x_i|). \label{phi-relation1}
\eeq
Notice that 
\beq \label{phi-relation2}
\sum\limits_{i=r+1}^n \phi (|x^*|_{[i]}) = \sum\limits_{i\in \cI^<_{r+1}(x^*)} \phi (|x^*_i|) + \frac{t_2}{t_1} \sum\limits_{i\in \cI^=_{r+1}(x^*)}  \phi (|x^*_i|).
\eeq
Since $x^*$ is a local minimizer of problem \eqref{p2},  we have 
$\sum\limits_{i=r+1}^n \phi (|x|_{[i]}) \ge \sum\limits_{i=r+1}^n \phi (|x^*|_{[i]}) $
when $x$ is sufficiently close to $x^*$. By this relation, \eqref{phi-relation1} and \eqref{phi-relation2}, one can see that when $x$ is sufficiently 
close to $x^*$, 
\[
\sum\limits_{i\in \cI^<_{r+1}(x^*)} \phi (|x_i|) + \frac{t_2}{t_1} \sum\limits_{i\in \cI^=_{r+1}(x^*)}  \phi (|x_i|) \ge \sum\limits_{i\in \cI^<_{r+1}(x^*)} \phi (|x^*_i|) + \frac{t_2}{t_1} \sum\limits_{i\in \cI^=_{r+1}(x^*)}  \phi (|x^*_i|).
\]
Hence, $x^*$ is a local minimizer of the problem 
\[
\min\limits_x \left\{ \sum\limits_{i\in \cI^<_{r+1}(x^*)} \phi (|x_i|) + \frac{t_2}{t_1} \sum\limits_{i\in \cI^=_{r+1}(x^*)}  \phi (|x_i|): Ax=b\right\}.
\]
It then follows that $(x^*_{\cI^{<+}_{r+1}},x^*_{\cI^=_{r+1}}, x^*_{\cI^>_{r+1}})$ is a local minimizer 
of the problem 
\beq \label{loc-prob}
\ba{ll}
\min & \sum\limits_{i\in \cI^{<+}_{r+1}} \phi (|x_i|) + \frac{t_2}{t_1} \sum\limits_{i\in \cI^=_{r+1}}  \phi (|x_i|) \\
&  A_{\cI^{<+}_{r+1}}x_{\cI^{<+}_{r+1}}+ A_{\cI^{=}_{r+1}}x_{\cI^{=}_{r+1}}+ A_{\cI^{>}_{r+1}}x_{\cI^{>}_{r+1}}=b,  
\ea
\eeq
where $\cI^{<+}_{r+1} =\cI^<_{r+1}(x^*)$, $\cI^=_{r+1}=\cI^=_{r+1}(x^*)$ 
and $\cI^>_{r+1}=\cI^>_{r+1}(x^*)$. By the second-order necessary optimality condition of  \eqref{loc-prob}, 
one has
\beq \label{2nd-opt1}
\sum\limits_{i \in \cI^{<+}_{r+1}} \phi '' (|x^*_i|) h_i^2 + \frac{t_2}{t_1} \sum\limits_{i \in \cI_{r+1}^=} \phi '' (|x^*_i|) h_i^2  \geq 0
\eeq
for all $h=(h_{\cI^{<+}_{r+1}},h_{\cI^=_{r+1}}, h_{\cI^>_{r+1}})$ satisfying 
\beq \label{2nd-opt2}
 A_{\cI^{<+}_{r+1}}h_{\cI^{<+}_{r+1}}+ A_{\cI^{=}_{r+1}}h_{\cI^{=}_{r+1}}+ A_{\cI^{>}_{r+1}}h_{\cI^{>}_{r+1}}=0.
\eeq
Recall that $|x^*_i|>0$ for all $i\in\cI^=_{r+1}$. Hence, $\cI^{\le+}_{r+1} = \cI^{<+}_{r+1} \cup \cI^=_{r+1}$.  This together with the 
assumption $\phi''(t) < 0$ for all $t > 0$ implies that $\phi''(|x^*_i|)<0$ for all $i\in \cI^{\le+}_{r+1}$. Using this relation, $t_1,t_2>0$, $\cI^{\le+}_{r+1} = \cI^{<+}_{r+1} \cup \cI^=_{r+1}$, 
\eqref{2nd-opt1} and \eqref{2nd-opt2}, we have 
\beq \label{2nd-opt3}
A_{\cI^{\le+}_{r+1}}h_{\cI^{\le+}_{r+1}}+ A_{\cI^{>}_{r+1}}h_{\cI^{>}_{r+1}}=0 \ \Longrightarrow \ h_{\cI^{\le+}_{r+1}}=0.
\eeq 
Recall that $\cJ \subseteq \cI^>_{r+1}$ is such that $\rank(\A_\cJ) = \rank(\A_{\cI^>_{r+1}})$ 
and $\A_\cJ$ has full column rank. This along with \eqref{2nd-opt3} yields
\[
A_{\cI^{\le+}_{r+1}}h_{\cI^{\le+}_{r+1}}+ A_\cJ h_\cJ=0 \ \Longrightarrow \ h_{\cJ\cup\cI^{\le+}_{r+1}}=0.
\]
It implies that $B= [A_{\cI^{\le+}_{r+1}} \  A_\cJ]$ has full column rank. 

(\romannumeral2) Notice that $Ax^*=b$. It then follows that 
\beq \label{feas}
A_{\cI^{\le+}_{r+1}} x^*_{\cI^{\le+}_{r+1}} + A_{\cI^{>}_{r+1}} x^*_{\cI^{>}_{r+1}} = \b.
\eeq
Since $\cJ \subseteq \cI^>_{r+1}$ and $\rank(\A_\cJ) = \rank(\A_{\cI^>_{r+1}})$, there exists 
some $\tx^*\Re^{|J|}$ such that $A_\cJ \tx^* = A_{\cI^{>}_{r+1}} x^*_{\cI^{>}_{r+1}}$. This together 
with \eqref{feas} yields
  \beq \nn
A_{\cI^{\le+}_{r+1}} x^*_{\cI^{\le+}_{r+1}} + A_\cJ \tx^*= \b.
\eeq
In view of this and the fact that $B= [A_{\cI^{\le+}_{r+1}} \  A_\cJ]$ has full column rank, one has 
\beq \nn
\left(\ba{c}
 x^*_{\cI^{\le+}_{r+1}} \\ [8pt] 
\tilde{x}^*
\ea \right) = (B^T B)^{-1} B^T \b,
\eeq
which immediately implies that statement (ii) holds.
\end{proof}

\gap

As a consequence of the above lemma, we next show that any local minimizer of \eqref{p2} is $r$-sparse or the magnitude of all its nonzero entries cannot be too small, which is indeed above a uniform constant depending only on $A$ and $b$. 

\begin{theorem} \label{thm:lowerbnd} 
Assume $\phi''(t)<0$ for all $t>0$. Let 
\beqa
\cB &:=& \left\{\cI \subseteq \{1,\ldots,n\}: A_\cI \ \mbox{has full column rank and} \  (A_\cI)^T b \neq 0\right\}, \nn \\ [5pt] 
\delta_\cI &:=& \min\left\{\Big|[(A_\cI)^T A_\cI]^{-1}(A_\cI)^T b\Big|_i: \ba{c} 
\left|[(A_\cI)^T A_\cI]^{-1}(A_\cI)^T b\right|_i>0, \nn \\ [5pt]
1\le i \le |\cI|
\ea\right\} \quad\quad \forall \cI \in \cB, \nn
\\ [5pt]
\delta &:=& \min\{\delta_\cI: \cI \in \cB\}. \label{delta}
\eeqa
Let $x^*$ be an arbitrary local minimizer of problem \eqref{p2} and 
\beq \label{l0-opt}
K=\min\left \{ \|x\|_0: \A x = \b \right \}.
\eeq
 There hold:
\bi
\item[(i)] If $0 \le r <K$, $|x^*_i| \ge \delta$ for all $i\in\cI^\c_0(x^*)$;
\item[(i)] If $K \le r \le n-1$,  $x^*$ is $r$-sparse  or $|x^*_i| \ge \delta$ for all $i\in\cI^\c_0(x^*)$.
\ei
\end{theorem}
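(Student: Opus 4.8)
The plan is to derive the theorem directly from Lemma \ref{lem:lowerbnd} by matching the explicit formula it gives for the small nonzero entries of $x^*$ against the definition of $\delta$ in \eqref{delta}. The organizing dichotomy is whether or not $x^*$ is $r$-sparse, so first I would dispose of the $r$-sparse alternative and then concentrate all the real work on the complementary case. Note that $x^*$ is feasible for \eqref{p2}, so $\A x^* = \b$ and hence $\|x^*\|_0 \ge K$ by \eqref{l0-opt}. In case (ii), where $K \le r$, this imposes nothing and the statement explicitly permits $x^*$ to be $r$-sparse, so it suffices to prove the bound $|x^*_i| \ge \delta$ under the extra hypothesis that $x^*$ is \emph{not} $r$-sparse. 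In case (i), where $r < K$, an $r$-sparse $x^*$ would force $\|x^*\|_0 \le r < K \le \|x^*\|_0$, a contradiction; hence $x^*$ is automatically not $r$-sparse and the same reduction applies. Thus both parts reduce to the single claim: if $x^*$ is not $r$-sparse, then $|x^*_i| \ge \delta$ for every $i \in \cI^\c_0(x^*)$.

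To prove this claim I would invoke Lemma \ref{lem:lowerbnd}. Since $x^*$ is not $r$-sparse, $\cI^{\le+}_{r+1} := \cI^{\le+}_{r+1}(x^*) \neq \emptyset$, so parts (i) and (ii) of that lemma apply: with $\cJ \subseteq \cI^>_{r+1}$ and $B = [\A_{\cI^{\le+}_{r+1}} \ \A_\cJ]$, the matrix $B$ has full column rank and $x^*_{\cI^{\le+}_{r+1}} = [e_1 \cdots e_l]^T (B^T B)^{-1} B^T \b$ with $l = |\cI^{\le+}_{r+1}|$. I would then set $\cI := \cI^{\le+}_{r+1} \cup \cJ$; since $\cI^{\le+}_{r+1}$ and $\cI^>_{r+1} \supseteq \cJ$ are disjoint, $\A_\cI$ agrees with $B$ up to a permutation of columns and so has full column rank. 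Moreover every entry of $x^*_{\cI^{\le+}_{r+1}}$ is nonzero by the definition of $\cI^{\le+}_{r+1}$ in \eqref{index}, so $(B^T B)^{-1} B^T \b \neq 0$, whence $B^T \b \neq 0$ and therefore $(\A_\cI)^T \b \neq 0$. Consequently $\cI \in \cB$, which in particular shows $\cB$ is nonempty so that $\delta$ in \eqref{delta} is a genuine finite minimum.

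Finally I would compare magnitudes. The vector $[(\A_\cI)^T \A_\cI]^{-1}(\A_\cI)^T \b$ is, up to the same column permutation, the vector $(B^T B)^{-1} B^T \b$, whose first $l$ coordinates are exactly $x^*_{\cI^{\le+}_{r+1}}$; all of these are nonzero, so by the definition of $\delta_\cI$ in \eqref{delta} they satisfy $|x^*_i| \ge \delta_\cI \ge \delta$ for every $i \in \cI^{\le+}_{r+1}$. For the remaining support indices $i \in \cI^>_{r+1} = \cI^\c_0(x^*) \setminus \cI^{\le+}_{r+1}$, I would note that $\cI^=_{r+1}(x^*) \subseteq \cI^{\le+}_{r+1}$ (because $|x^*|_{[r+1]} > 0$ when $x^*$ is not $r$-sparse), so picking any $j \in \cI^=_{r+1}(x^*)$ gives $|x^*|_{[r+1]} = |x^*_j| \ge \delta$; since $|x^*_i| > |x^*|_{[r+1]}$ for $i \in \cI^>_{r+1}$, the bound $|x^*_i| \ge \delta$ extends to all of $\cI^\c_0(x^*) = \cI^{\le+}_{r+1} \cup \cI^>_{r+1}$, establishing the claim and hence both parts of the theorem.

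The hard part will be the index bookkeeping in the last step rather than any analytic estimate: one must confirm that the coordinate extraction $[e_1 \cdots e_l]^T$ supplied by the lemma and the coordinatewise minimum defining $\delta_\cI$ refer to the same entries, i.e. that the columns indexed by $\cI^{\le+}_{r+1}$ occupy the first $l$ positions of $B$, and that $\delta_\cI$—being a minimum over \emph{all} positive-magnitude coordinates of $[(\A_\cI)^T\A_\cI]^{-1}(\A_\cI)^T\b$, possibly including $\cJ$-coordinates—still lower-bounds each $|x^*_i|$ with $i \in \cI^{\le+}_{r+1}$. The passage from the small entries to the large ones through the threshold $|x^*|_{[r+1]}$ is the only place where the ordering structure encoded in \eqref{index} is genuinely used.
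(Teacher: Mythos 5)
Your proposal is correct and follows essentially the same route as the paper's own proof: reduce both cases to the non-$r$-sparse situation, then combine Lemma \ref{lem:lowerbnd} with the definition of $\delta$ in \eqref{delta}, using the ordering in \eqref{index} to pass from $\cI^{\le+}_{r+1}(x^*)$ to $\cI^>_{r+1}(x^*)$. The paper compresses all of this into a two-line ``one can observe'' argument, whereas you have written out the index bookkeeping (membership of $\cI^{\le+}_{r+1}\cup\cJ$ in $\cB$, the column permutation, and the threshold comparison) explicitly and correctly.
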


\begin{proof}
 Notice $\cI^\c_0(x^*) = \cI^{\le+}_{r+1}(x^*) \cup \cI^>_{r+1}(x^*)$. 
In view of this, Lemma \ref{lem:lowerbnd} and the definition of $\delta$, 
one can observe that if $x^*$ is  non-$r$-sparse, then $\cI^{\le+}_{r+1}(x^*) \neq \emptyset$ and 
\[
|x^*_i| \ge \min\{|x^*_j|: j \in \cI^{\le+}_{r+1}(x^*)\} \ge \delta \quad\quad \forall i \in \cI^\c_0(x^*).
\]
In addition, by  the definition of $K$, one can see that if $0 \le r <K$, $x^*$ must be non-$r$-sparse. The conclusion of 
this theorem then immediately follows from these observations and Lemma 
\ref{lem:lowerbnd}.
\end{proof}

\gap

{\bf Remark:} The quantity $\delta$ given in \eqref{delta}  is independent 
to $r$, but only depends on $A$ and $b$. In addition, the above theorem implies that the components of a non-$r$-sparse local minimizer of \eqref{p2} with magnitude below $\delta$ must be zero. Therefore, any  local minimizer of \eqref{p2} is either $r$-sparse or the magnitude of 
its nonzero entries is above $\delta$.

\gap

In the context of sparse recovery, the regularizers often associate with some parameter 
that controls the proximity to the $\ell_0$ regularizer. For example, the well-known $\ell_q$  
regularizer is one of them. We next consider a special class of model \eqref{p2} with 
$\phi(\cdot) = \psi(\cdot, q)$ for some parameter $q$, namely,        
\beq
\label{p3}
\begin{aligned}
& \underset{x \in \Re^n}{\text{min}}
& & \sum\limits_{i=r+1}^n \psi(|x|_{[i]},q) \\
& \text{s.t.}
& & \A x = \b,
\end{aligned}
\eeq
where $\psi: \Re_+ \times (0,\alpha) \to \Re_+$ for some $\alpha \in (0,\infty]$ satisfies the following assumption. 

\begin{assumption} \label{assump-psi} 
\bi
\item[(a)] $\psi(\cdot,q)$ is lower semi-continuous and increasing in $\Re_+$ for any $q\in(0,\alpha)$;
\item[(b)] $\frac{\partial^2 \psi}{\partial t^2} (t,q) < 0$ for all $t>0$ and $q\in (0,\alpha)$;
\item[(c)] $\psi(0,q) = 0$ and $\lim\limits_{q \to 0^+} \psi(t,q) = 1$ for all $t>0$.
\ei 
\end{assumption}

 One can observe from Theorem \ref{th1}  that under Assumption \ref{assump-psi}(a), problem \eqref{p3} has at least  an optimal solution. 
In addition, under Assumption \ref{assump-psi}(b), Theorem \ref{thm:lowerbnd} also holds for problem\eqref{p3}. We next show that when $q$ is sufficiently small, any global minimizer of \eqref{p3} is a sparsest solution to the system $Ax=b$. This indicates that problem \eqref{p3} is capable of inducing a sparse solution.

\begin{theorem} \label{th3}
Suppose that Assumption \ref{assump-psi} holds for $\psi$. Let $K$ be defined in \eqref{l0-opt}. 
Then for any $0 \le r \leq \min\{K,n-1\}$ and sufficiently small $q > 0$, any global minimizer 
$x^*$ of problem \eqref{p3} is also that of problem \eqref{l0-opt}, that is, $x^*$ is a sparsest solution to the system $Ax=b$. 
 \end{theorem}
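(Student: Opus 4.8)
The plan is to exploit Assumption \ref{assump-psi}(c): the summand $\psi(t,q)$ tends to $1$ for every fixed $t>0$ and equals $0$ at $t=0$, so the objective of \eqref{p3} converges pointwise to $(\|x\|_0-r)_+$ as $q\downarrow 0$. Since $r\le K$, this limiting objective is minimized precisely by the sparsest solutions of $Ax=b$. I would therefore compare the optimal value of \eqref{p3} against the objective value at a \emph{fixed} sparsest solution and show that, for small $q$, any minimizer that is not $K$-sparse is strictly worse.

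First, fix a sparsest solution $\bar x$ with $\|\bar x\|_0=K$. Because $r\le K$, exactly $K-r$ of the order statistics $|\bar x|_{[r+1]},\ldots,|\bar x|_{[n]}$ are nonzero, each a fixed positive number, so Assumption \ref{assump-psi}(c) gives
\[
\sum_{i=r+1}^n \psi(|\bar x|_{[i]},q)=\sum_{i=r+1}^K \psi(|\bar x|_{[i]},q)\longrightarrow K-r\qquad(q\downarrow 0),
\]
which furnishes an upper bound $V_0(q)\to K-r$ on the optimal value of \eqref{p3}. The crucial point is to produce a matching lower bound for minimizers that are not $K$-sparse, and here the difficulty is that a global minimizer $x^q$ of \eqref{p3} depends on $q$, so its nonzero entries could a priori shrink to $0$ as $q\downarrow 0$, invalidating the pointwise limit. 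This is resolved by the remark following Theorem \ref{thm:lowerbnd}: since Assumption \ref{assump-psi}(b) supplies $\partial^2\psi/\partial t^2(\cdot,q)<0$, Theorem \ref{thm:lowerbnd} applies to \eqref{p3}, and the resulting threshold $\delta$ depends only on $A$ and $b$, \emph{not} on $q$. Concretely, $x^q$ is a global (hence local) minimizer, so either it is $r$-sparse or every nonzero entry satisfies $|x^q_i|\ge\delta$. If $x^q$ is $r$-sparse, then $\|x^q\|_0\le r\le K$, and combined with $\|x^q\|_0\ge K$ (the definition of $K$) this forces $\|x^q\|_0=K$. Otherwise, monotonicity of $\psi(\cdot,q)$ together with $\psi(0,q)=0$ gives, whenever $\|x^q\|_0\ge K+1$,
\[
\sum_{i=r+1}^n \psi(|x^q|_{[i]},q)\ge (\|x^q\|_0-r)\,\psi(\delta,q)\ge (K+1-r)\,\psi(\delta,q),
\]
whose right-hand side tends to $K+1-r$ as $q\downarrow 0$ by Assumption \ref{assump-psi}(c).

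Finally I would combine the two estimates. Since $K+1-r>K-r$ and the quantities $(K+1-r)\psi(\delta,q)$ and $V_0(q)$ converge to these respective limits, there is a threshold $\bar q>0$, depending only on $A$, $b$, $\bar x$, $r$ and $K$ and not on any particular minimizer, such that $(K+1-r)\psi(\delta,q)>V_0(q)$ for all $q\in(0,\bar q)$. For such $q$, the hypothesis $\|x^q\|_0\ge K+1$ would give that the optimal value of \eqref{p3} is at least $(K+1-r)\psi(\delta,q)>V_0(q)$, contradicting feasibility of $\bar x$; hence $\|x^q\|_0\le K$, and therefore $\|x^q\|_0=K$, i.e.\ $x^q$ solves \eqref{l0-opt}. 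The one step requiring care is the uniformity of $\bar q$ over \emph{all} global minimizers, which is exactly what the $q$-independence of $\delta$ secures; the degenerate situation $b=0$ (so $K=r=0$ and $\cB=\emptyset$, whence $\delta=+\infty$) is immediate, as Theorem \ref{thm:lowerbnd} then forces every minimizer to vanish.
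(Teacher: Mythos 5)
Your proposal is correct and follows essentially the same route as the paper's proof: both rely on Theorem \ref{thm:lowerbnd} to get the $q$-independent threshold $\delta$, compare the objective value of a global minimizer against that of a fixed sparsest solution, and exploit the integer gap between $K-r$ and $K+1-r$ (the paper phrases this as the bound $n-r-|\cI_0(x^*)|\le K-r+\tfrac12$ plus integrality, while you phrase it as a contradiction from $\|x^q\|_0\ge K+1$, which is the same counting argument). Your explicit treatment of the uniformity of $\bar q$ and of the degenerate case $b=0$ is a nice touch that the paper glosses over, but it does not change the substance of the argument.
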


\begin{proof} 
Let $r\in [0, \min\{K,n-1\}]$ be arbitrarily chosen, $\tx^*$ and $x^*$ a global minimizer of \eqref{l0-opt} and \eqref{p3}, respectively,  and let  $\delta$ be given in \eqref{delta}. In view of Assumption 
\ref{assump-psi}(c), $\|\tx^*\|_0=K$ and $r\le K$, one can observe that 
\[
\lim\limits_{q\to 0^+} \frac{\sum\limits_{i=r+1}^n \psi(|\tx^*|_{[i]},q)}{\psi(\delta,q)} = K-r.
\]
Hence, there exists some $0<\eps<\alpha$ such that $\psi(\delta,q)>0$ and 
\beq \label{ratio-ineq}
\frac{\sum\limits_{i=r+1}^n \psi(|\tx^*|_{[i]},q)}{\psi(\delta,q)}  \le K-r + \frac12 \quad\quad \forall q \in (0,\eps).
\eeq
By Theorem \ref{thm:lowerbnd}, we know that $x^*$ is a $r$-sparse 
solution to $Ax=b$ or $|x^*_i| \ge \delta$ for all $i\in\cI^\c_0(x^*)$. 
We next show that $x^*$ is an optimal solution of \eqref{l0-opt} if 
$q\in(0,\eps)$ by considering two cases.

Case 1): $x^*$ is a $r$-sparse solution to $Ax=b$. This together with the assumption $r\le K$ implies 
that $x^*$ is a $K$-sparse solution to $Ax=b$. It then follows from \eqref{l0-opt} that the conclusion 
holds.

Case 2): $|x^*_i| \ge \delta$ for all $i\in\cI^\c_0(x^*)$. This together with the monotonicity of $\psi(\cdot,q)$ implies that $\psi(|x^*_i|,q) \ge \psi(\delta,q)$. Since $A\tx^*=b$ and $x^*$ is a global minimizer of  problem \eqref{p3}, one has  
\[
\sum\limits_{i=r+1}^n \psi(|x^*|_{[i]},q) \leq \sum\limits_{i=r+1}^n \psi(|\tx^*|_{[i]},q).
\]
Using these two relations and $\psi(0,q)=0$,  we obtain that
\beq \nn
\left(n-r-|\cI_0(x^*)|\right) \psi(\delta,q) \le \sum\limits_{i=r+1}^n \psi(|x^*|_{[i]},q) \leq \sum\limits_{i=r+1}^n \psi(|\tx^*|_{[i]},q) \quad\quad \forall q \in (0,\eps),
\eeq 
which together with \eqref{ratio-ineq} and $\psi(\delta,q)>0$ implies $n-r-|\cI_0(x^*)| \le K-r+1/2$ and hence 
$|\cI_0(x^*)| \ge n-K-1/2$. Since $|\cI_0(x^*)|$ is integer, it then follows that $|\cI_0(x^*)| \ge n-K$ and hence $|\cI^\c_0(x^*)| \le K$. This along with \eqref{l0-opt} implies 
that $x^*$ is an optimal solution of \eqref{l0-opt} if $q\in(0,\epsilon)$.
\end{proof}

\section{Sparse recovery}
\label{recovery}

In this section we first establish some sufficient conditions for local or global recovery of a sparsest solution to $Ax=b$ by model \eqref{pL1} with $\sigma=0$, namely, \eqref{p2}. Then we study the stable recoverability of model \eqref{pL1} with $\phi(\cdot)=|\cdot|$.

\subsection{Local sparse recovery by model \eqref{p2}}

In this subsection we study some sufficient conditions for local recovery of 
a sparsest solution $x^*$ to the system $Ax=b$ by model \eqref{p2}. In particular, we show that if a local null space property holds, $x^*$ is a  
strictly local minimizer of \eqref{p2}. We also establish a local recovery result 
for \eqref{p2} under some RIP conditions and the assumption $\phi(\cdot)=|\cdot|^q$ for $q\in(0,1]$. To this end, we first introduce the definition of 
local null space property.

\begin{definition}[local null space property]
The {\it local null space property} {\rm{LNSP}}($r$,$\phi$) holds 
for the system $Ax=b$ at a solution $x^*$ with $\|x^*\|_0 \ge r$ if there exists some $\epsilon > 0$ such that 
\beq \label{lnsp}
\sum\limits_{i\in \cI_0(x^*)} \phi(|h_i|)-\max\limits_{J\in \cJ} \sum\limits_{i\in J \cup \cI_{r+1}^{<+}(x^*)} \phi(|h_i|) > 0
\eeq
for all $h \in \mathcal{N}(\A)$ with $0<\|h\|< \epsilon$, 
where 
\beq \label{cJ}
\cJ = \left\{J \subseteq \cI_{r+1}^=(x^*): |J|=\|x^*\|_0-r-|\cI_{r+1}^{<+}(x^*)|\right\}.
\eeq 
\end{definition}

\vgap

We next show that for a class of $\phi$, a solution $x^*$ of $\A x = \b$ is a strictly local minimizer of \eqref{p2} if the local null space property holds at $x^*$.

\begin{theorem} \label{th4} 
Suppose that Assumption \ref{assump-phi} holds for $\phi$ and additionally   
\beq \label{phi1}
\phi(s) \geq \phi(t)-\phi(|s-t|) \quad\quad \forall s, t \geq 0.
\eeq
Let $x^* \in \Re^n$ be such that $\A x^* = \b$. Assume that  {\rm{LNSP}}($r$,$\phi$) holds at $x^*$ for 
some $0\le r \le \|x^*\|_0$. Then $x^*$ is a strictly local minimizer of problem \eqref{p2} with such $\phi$ and $r$.
\end{theorem}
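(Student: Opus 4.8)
The plan is to take an arbitrary feasible point $x$ close to $x^*$, set $h=x-x^*$ and note $h\in\mathcal{N}(\A)$ since $\A x=\A x^*=\b$, and then show that the objective of \eqref{p2} strictly increases whenever $h\neq 0$. The radius on which this holds will be the minimum of the $\epsilon$ supplied by {\rm LNSP}$(r,\phi)$ and a separation radius $\epsilon_0$ introduced below. The first move is to rewrite the partial regularizer in a form suited to comparison. Because $\phi$ is increasing with $\phi(0)=0$, deleting the $r$ entries of largest magnitude minimizes the retained sum, so
\beq
\sum_{i=r+1}^n\phi(|x|_{[i]})=\min_{|T|=r}\sum_{i\notin T}\phi(|x_i|)=\sum_{i\notin T_x}\phi(|x_i|)
\eeq
for some optimal index set $T_x$ with $|T_x|=r$.

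The combinatorial heart, and what I expect to be the main obstacle, is to pin down the structure of $T_x$. Assume first $\mu:=|x^*|_{[r+1]}>0$, i.e. $\|x^*\|_0\ge r+1$. Since $\cI^>_{r+1}(x^*)$ has at most $r$ elements, all with $|x^*_i|>\mu$, while the magnitudes on $\cI^{<+}_{r+1}(x^*)\cup\cI_0(x^*)$ are bounded below $\mu$, a gap argument shows that once $\|h\|<\epsilon_0$, where $\epsilon_0$ depends on $\min_{i\in\cI^>_{r+1}(x^*)}|x^*_i|-\mu$ and on $\mu-\max_{i\in\cI^{<+}_{r+1}(x^*)}|x^*_i|$, every entry of $\cI^>_{r+1}(x^*)$ strictly exceeds in magnitude every entry outside it, and the entries of $\cI^=_{r+1}(x^*)$ dominate those of $\cI^{<+}_{r+1}(x^*)\cup\cI_0(x^*)$. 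Consequently $\cI^>_{r+1}(x^*)\subseteq T_x\subseteq\cI^>_{r+1}(x^*)\cup\cI^=_{r+1}(x^*)$, so with $Q:=T_x\setminus\cI^>_{r+1}(x^*)\subseteq\cI^=_{r+1}(x^*)$ the retained support is $R:=(\cI^=_{r+1}(x^*)\setminus Q)\cup\cI^{<+}_{r+1}(x^*)$. A cardinality count gives $|\cI^=_{r+1}(x^*)\setminus Q|=\|x^*\|_0-r-|\cI^{<+}_{r+1}(x^*)|$, so $\cI^=_{r+1}(x^*)\setminus Q\in\cJ$ in the sense of \eqref{cJ}; securing this membership is exactly what makes the null-space quantity in \eqref{lnsp} applicable.

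With $T_x$ localized I would run the comparison. The hypothesis \eqref{phi1} with $s=|x_i|$, $t=|x^*_i|$, combined with $\big||x_i|-|x^*_i|\big|\le|h_i|$ and the monotonicity of $\phi$, yields the pointwise bound $\phi(|x_i|)\ge\phi(|x^*_i|)-\phi(|h_i|)$. Applying it on $R$, using $\phi(|x_i|)=\phi(|h_i|)$ on $\cI_0(x^*)$, and noting that the resulting $\phi(|x^*_i|)$ terms reassemble exactly into $\sum_{i=r+1}^n\phi(|x^*|_{[i]})$ (each retained entry of $\cI^=_{r+1}(x^*)$ contributes $\phi(\mu)$ and their number is $|\cI^=_{r+1}(x^*)\setminus Q|$), I obtain
\beq
\sum_{i=r+1}^n\phi(|x|_{[i]})-\sum_{i=r+1}^n\phi(|x^*|_{[i]})\ \ge\ \sum_{i\in\cI_0(x^*)}\phi(|h_i|)-\sum_{i\in R}\phi(|h_i|).
\eeq
Since $\cI^=_{r+1}(x^*)\setminus Q\in\cJ$, the subtracted sum is at most $\max_{J\in\cJ}\sum_{i\in J\cup\cI^{<+}_{r+1}(x^*)}\phi(|h_i|)$, so the right-hand side is bounded below by the left-hand side of \eqref{lnsp}, which is strictly positive for $0<\|h\|<\epsilon$. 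Taking $\epsilon=\min\{\epsilon_0,\epsilon_{\rm LNSP}\}$ then shows $x^*$ is a strictly local minimizer.

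Finally I would dispose of the degenerate case $r=\|x^*\|_0$, where $\mu=0$, $\cI^=_{r+1}(x^*)=\cI_0(x^*)$ and $\cI^{<+}_{r+1}(x^*)=\emptyset$. Here $\cJ=\{\emptyset\}$ and, for $h$ small, $T_x=\cI^>_{r+1}(x^*)$ is the whole support of $x^*$, so the argument collapses to the single inequality $\sum_{i\in\cI_0(x^*)}\phi(|h_i|)>0$, which is precisely \eqref{lnsp} in this case.
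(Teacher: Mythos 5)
Your proposal is correct and follows essentially the same route as the paper's proof: both decompose $\sum_{i=r+1}^n\phi(|x^*+h|_{[i]})$, for $h$ small, into a sum over $J\cup\cI_{r+1}^{<+}(x^*)\cup\cI_0(x^*)$ with $J\in\cJ$, apply the pointwise bound $\phi(|x^*_i+h_i|)\ge\phi(|x^*_i|)-\phi(|h_i|)$ coming from \eqref{phi1} and monotonicity, reassemble the $\phi(|x^*_i|)$ terms into $\sum_{i=r+1}^n\phi(|x^*|_{[i]})$, and invoke \eqref{lnsp}. The paper simply asserts the existence of such a $J$ from the stability of the index sets $\cI^<_{r+1}$ and $\cI^>_{r+1}$ under small perturbations, whereas you prove it explicitly via the gap argument on $T_x$ and $Q$ (and treat the case $r=\|x^*\|_0$ separately); this is added detail, not a different method.
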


\begin{proof}
One can observe that $\cI^<_{r+1}(x^*) \subseteq \cI^<_{r+1}(x^*+h)$  
and $\cI^>_{r+1}(x^*) \subseteq \cI^>_{r+1}(x^*+h)$ for sufficiently 
small $h$. Hence, for every sufficiently small $h$, there exists some $J \subseteq \cI_{r+1}^=(x^*)$ (dependent on $h$) with $|J| = \|x^*\|_0-r-|\cI_{r+1}^{<+}(x^*)|$ such that
\beq \label{sum-phi}
\sum\limits_{i=r+1}^n \phi(|x^*+h|_{[i]}) = \sum\limits_{i\in J \cup \cI_{r+1}^{<+}(x^*)} \phi(|x^*_i+h_i|) + \sum\limits_{i \in \cI_0(x^*)}\phi(|h_i|).
\eeq
By $|x^*_i+h_i| \ge \big||x^*_i|-|h_i|\big|$,  
the monotonicity of $\phi$ and  \eqref{phi1}, one has 
\beq \label{phi-tineq}
\phi(|x^*_i+h_i|) \ge \phi\left(\big||x^*_i|-|h_i|\big|\right)  \ge \phi(|x^*_i|)-\phi(|h_i|)\quad \forall i.
\eeq 
Using this relation and \eqref{sum-phi}, we obtain that
\beq \label{sum-phi1}
\sum\limits_{i=r+1}^n \phi(|x^*+h|_{[i]}) 
\geq \sum\limits_{i \in J \cup \cI_{r+1}^{<+}(x^*)} \phi(|x^*_i|) - \sum\limits_{i \in J \cup \cI_{r+1}^{<+}(x^*)} \phi(|h_i|) + \sum\limits_{i \in \cI_0(x^*)}\phi(|h_i|).
\eeq
Since $\phi(0)=0$, one has $\phi(x^*_i)=0$ for all $i\in \cI_0(x^*)$. By this and the definition of $J$, we have
\[
\sum\limits_{i \in J \cup \cI_{r+1}^{<+}(x^*)} \phi(|x^*_i|) = 
\sum\limits_{i \in J \cup \cI_{r+1}^{<+}(x^*)} \phi(|x^*_i|) + \sum\limits_{i \in \cI_0(x^*)}\phi(|x^*_i|) = \sum\limits_{i=r+1}^n \phi(|x^*|_{[i]}). 
\]
This relation and \eqref{sum-phi1} yield
\[
\sum\limits_{i=r+1}^n \phi(|x^*+h|_{[i]}) \ge \sum\limits_{i=r+1}^n \phi(|x^*|_{[i]}) - \sum\limits_{i \in J \cup \cI_{r+1}^{<+}(x^*)} \phi(|h_i|) + \sum\limits_{i \in \cI_0(x^*)}\phi(|h_i|).
\]
By this and \eqref{lnsp},  we see that for all sufficiently small 
$h \in \mathcal{N}(\A) \backslash \{ 0 \}$, there holds
\beq \nn
\sum\limits_{i=r+1}^n \phi(|x^*+h|_{[i]}) > \sum\limits_{i=r+1}^n \phi(|x^*|_{[i]}).
\eeq
Hence, $x^*$ is a strictly local minimizer of problem \eqref{p2}. 
\end{proof}

\gap

{\bf Remark:} It is not hard to see that $\phi(t)=|t|^q$ for some $q\in(0,1]$ satisfies the assumption stated in Theorem \ref{th4}. 

\vgap

As an immediate consequence,  we obtain that under some suitable 
assumption, a sparsest solution $x^*$ to the system $Ax=b$ is a 
strictly local minimizer of \eqref{p2}. One implication of this result is 
that an optimization method applied to \eqref{p2} likely converges  
to $x^*$ if its initial point is near $x^*$.

\begin{corollary}[local sparse recovery] \label{K-loc-recovery}
Suppose that Assumption \ref{assump-phi} and  \eqref {phi1} hold for $\phi$,   
and $x^*$ is a sparsest solution to the system $Ax=b$. Assume that the local null space property {\rm{LNSP}}($r$,$\phi$) holds at $x^*$ for some $0\le r\le K$, where $K$ is defined in \eqref{l0-opt}.  Then $x^*$ is a strictly local minimizer of problem \eqref{p2} with such $\phi$ and $r$.
\end{corollary}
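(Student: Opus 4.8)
The plan is to observe that this corollary is an immediate specialization of Theorem~\ref{th4}, so the entire argument reduces to matching up the hypotheses. First I would recall that $x^*$ is a sparsest solution to $Ax=b$, which by the definition of $K$ in \eqref{l0-opt} means precisely that $\|x^*\|_0 = K$. Consequently the restriction $0\le r\le K$ imposed in the corollary is identical to the condition $0\le r\le \|x^*\|_0$ required in Theorem~\ref{th4}; in particular it guarantees $\|x^*\|_0\ge r$, so that the local null space property {\rm{LNSP}}($r$,$\phi$) is even well defined at $x^*$.

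Next I would verify that the remaining hypotheses of Theorem~\ref{th4} are all in force. Assumption~\ref{assump-phi} and the inequality \eqref{phi1} are assumed directly in the corollary; the feasibility relation $\A x^*=\b$ holds because $x^*$ is by assumption a solution of the linear system; and {\rm{LNSP}}($r$,$\phi$) at $x^*$ is itself part of the hypothesis. With every assumption of Theorem~\ref{th4} thereby checked, I would simply invoke that theorem to conclude that $x^*$ is a strictly local minimizer of problem \eqref{p2} with the given $\phi$ and $r$.

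Since the result is a genuine corollary, there is no real obstacle to overcome: no new estimate, construction, or case analysis beyond Theorem~\ref{th4} is needed. The only point that deserves a moment's care is the identification $\|x^*\|_0=K$, which is what translates the sparsest-solution assumption into exactly the range restriction on $r$ that Theorem~\ref{th4} demands. I would therefore keep the proof to a single short chain of this form, emphasizing that the strictness of the local minimizer is inherited verbatim from Theorem~\ref{th4} rather than re-derived.
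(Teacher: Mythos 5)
Your proposal is correct and matches the paper exactly: the paper presents this result as an immediate consequence of Theorem \ref{th4}, with no separate proof beyond the observation that a sparsest solution satisfies $\|x^*\|_0=K$, so the hypothesis $0\le r\le K$ coincides with the condition $0\le r\le \|x^*\|_0$ required by that theorem. Your careful identification of this point, together with the routine verification of the remaining hypotheses, is precisely the intended argument.
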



The above local sparse recovery is established based on the local null space property {\rm{LNSP}}($r$,$\phi$).  We next show that the local sparse recovery holds under some RIP conditions when $\phi(t) = |t|^q$ for some $q \in(0, 1]$. Before proceeding, we state two useful lemma as follows. The proof of the first lemma is similar to that of \cite[Theorem 3.1]{Cai2013}. The second lemma can be proven similarly as \cite[Theorem 1]{Cai2014} and \cite[Theorems 1 and 2]{Song2014}. Due to the paper length limitation, we omit the proof of them.   

\begin{lemma} \label{RIP-k}
Let $k \in \{1,2,\ldots, n-1\}$ and $q\in (0,1]$ be given. Suppose  
$\delta^A_k  < 1/3$. Then there holds
\[
\|h_{-\max(k)}\|_q^q > \|h_{\max(k)}\|_q^q \quad\quad \forall h \in \mathcal{N}(\A) \backslash \{ 0 \}.
\]
\end{lemma}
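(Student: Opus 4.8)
The plan is to derive the order-$k$ null space property directly from the restricted isometry hypothesis, following the sparse-representation-of-a-polytope technique of Cai and Zhang \cite{Cai2013,Cai2014}. Fix $h \in \mathcal{N}(\A)\backslash\{0\}$ and let $S$ be an index set collecting $k$ entries of largest absolute value, so that $h_S = h_{\max(k)}$ and $h_{S^\c}=h_{-\max(k)}$; the goal is the strict inequality $\|h_{S^\c}\|_q^q > \|h_S\|_q^q$. I would argue by contradiction, assuming $\|h_{S^\c}\|_q^q \le \|h_S\|_q^q$, and aim to contradict $\delta^A_k < 1/3$.

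First I would record the elementary bounds that confine the tail $h_{S^\c}$ to a small polytope. Since $S$ contains the largest entries, $\|h_{S^\c}\|_\infty \le \min_{j\in S}|h_j| \le (\|h_S\|_q^q/k)^{1/q}=:\alpha$, and the contradiction hypothesis gives $\|h_{S^\c}\|_q^q \le k\alpha^q$. Thus $h_{S^\c}$ lies in the set $\{v:\ {\rm supp}(v)\subseteq S^\c,\ \|v\|_\infty\le\alpha,\ \|v\|_q^q\le k\alpha^q\}$. The heart of the argument is then the $\ell_q$ analogue of the polytope representation lemma: any such $v$ can be written as a finite convex combination $v=\sum_i\lambda_i u_i$ of vectors $u_i$ that are $k$-sparse, supported in $S^\c$, with $\|u_i\|_\infty\le\alpha$ and $\|u_i\|_q^q=\|v\|_q^q\le k\alpha^q$. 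Two consequences follow at once: each $k$-sparse piece satisfies $\|u_i\|^2\le\alpha^{2-q}\|u_i\|_q^q\le k\alpha^2$ (using $2-q\ge 0$ and $\|u_i\|_\infty\le\alpha$), while comparing the normalized $\ell_q$ and $\ell_2$ means (valid since $q\le 2$) gives $\|h_S\|^2\ge k\alpha^2$.

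Using $\A h=0$, hence $\A h_S=-\sum_i\lambda_i \A u_i$, I would then estimate $\|\A h_S\|^2=-\sum_i\lambda_i\langle \A h_S,\A u_i\rangle$ by applying the order-$k$ restricted isometry bounds to the $k$-sparse vectors $h_S$ and $u_i$ through the polarization identity, \emph{retaining the cancellation} between $\A h_S$ and the $\A u_i$ rather than using a crude triangle inequality. Combined with the lower bound $\|\A h_S\|^2\ge(1-\delta^A_k)\|h_S\|^2$ and with $\|h_S\|^2\ge k\alpha^2$, this pins down the feasible range of $\delta^A_k$ and forces $\delta^A_k\ge 1/3$, the desired contradiction. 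Since $h\in\mathcal{N}(\A)\backslash\{0\}$ was arbitrary, the lemma follows.

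The main obstacle is twofold and lies entirely in this last step. First, the polytope representation must be lifted from the $\ell_1$ setting of \cite{Cai2013} to general $q\in(0,1]$, so that the convex decomposition preserves the $\ell_q^q$ ``mass'' of the tail rather than its $\ell_1$ norm. Second, and more delicate, is extracting the \emph{sharp} constant $1/3$: the naive estimate $\|\A h_{S^\c}\|\le\sum_i\lambda_i\|\A u_i\|$ collapses to the vacuous $1-\delta^A_k\le 1+\delta^A_k$, so one must exploit the precise cancellation exactly as in \cite{Cai2013,Cai2014}. I would handle both by mirroring the bookkeeping of \cite[Theorem~3.1]{Cai2013}, with the $\ell_1$ quantities replaced by their $\ell_q^q$ counterparts throughout.
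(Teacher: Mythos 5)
Your overall frame is the right one, and it is essentially the frame the paper itself points to (the paper gives no proof of this lemma at all; it defers to \cite[Theorem~3.1]{Cai2013}): argue by contradiction, set $\alpha=(\|h_S\|_q^q/k)^{1/q}$, note $\|h_{S^{\mathrm{c}}}\|_\infty\le\alpha$, and record the power-mean bounds $\|h_S\|^2\ge k\alpha^2$ and $\|u_i\|^2\le\alpha^{2-q}\|u_i\|_q^q\le k\alpha^2$; those reductions are all correct. The first genuine gap is your central tool: the ``$\ell_q$ analogue'' of the polytope representation lemma, with pieces preserving the $\ell_q^q$ mass of the tail, is asserted rather than proved, and your plan to get it by ``mirroring the bookkeeping'' of \cite{Cai2013} cannot work as stated, because for $q<1$ the set $\{v:\|v\|_\infty\le\alpha,\ \|v\|_q^q\le k\alpha^q\}$ is not convex --- it is not a polytope --- so the extreme-point/convexity argument behind the $\ell_1$ lemma does not transfer; even if such a decomposition can be established, it needs a genuinely different proof. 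More to the point, the detour is unnecessary: the reduction actually used in \cite{Song2014} for exactly this generalization is the one-line estimate $\|h_{S^{\mathrm{c}}}\|_1\le\|h_{S^{\mathrm{c}}}\|_\infty^{1-q}\|h_{S^{\mathrm{c}}}\|_q^q\le\alpha^{1-q}\cdot k\alpha^q=k\alpha$, which places the tail in the genuine $\ell_1$--$\ell_\infty$ polytope, after which the original representation lemma of \cite{Cai2014} and all downstream $\ell_1$ bookkeeping apply verbatim.

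The second, more serious gap is the last step. You propose to bound $\|Ah_S\|^2=-\sum_i\lambda_i\langle Ah_S,Au_i\rangle$ by ``applying the order-$k$ restricted isometry bounds to the $k$-sparse vectors $h_S$ and $u_i$ through the polarization identity.'' Polarization produces the vectors $h_S\pm u_i$, which are $2k$-sparse since the $u_i$ are supported in $S^{\mathrm{c}}$, and the hypothesis $\delta^A_k<1/3$ gives no control whatsoever on $\|A(h_S\pm u_i)\|$. Carried out honestly, this route proves the lemma only at order $2k$: for instance, from $\sum_i\lambda_iA(h_S-u_i)=2Ah_S$ and the variance identity
\begin{equation*}
\sum_i\lambda_i\|A(h_S-u_i)\|^2=4\|Ah_S\|^2+\sum_i\lambda_i\|A(u_i-h_{S^{\mathrm{c}}})\|^2\ \ge\ 4(1-\delta^A_k)\|h_S\|^2,
\end{equation*}
together with $\|h_S-u_i\|^2=\|h_S\|^2+\|u_i\|^2\le 2\|h_S\|^2$, one obtains $4(1-\delta^A_k)\le 2(1+\delta^A_{2k})$, whence $3\delta^A_{2k}\ge 2\delta^A_k+\delta^A_{2k}\ge 1$; the contradiction therefore requires $\delta^A_{2k}<1/3$, which is strictly stronger than what the lemma assumes. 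Obtaining the sharp constant $1/3$ at order $k$ is precisely the nontrivial content of \cite[Theorem~3.1]{Cai2013}, whose argument is arranged so that the restricted isometry property is only ever invoked on vectors that are genuinely $k$-sparse; your sketch does not engage with that difficulty, so under the stated hypothesis $\delta^A_k<1/3$ the contradiction is never reached.
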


\begin{lemma} \label{RIP-tk}
Let $k \in \{1,2,\ldots, n-1\}$ and $q\in (0,1]$ be given. Suppose 
$\delta^A_{\gamma k}  < \frac{1}{\sqrt{(\gamma-1)^{1-2/q}+1}}$ for some $\gamma>1$. Then there holds
\[
\|h_{-\max(k)}\|_q^q > \|h_{\max(k)}\|_q^q \quad\quad \forall h \in \mathcal{N}(\A) \backslash \{ 0 \}.
\]
\end{lemma}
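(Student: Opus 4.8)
The plan is to argue by contradiction, reducing the claim to a null space property and then contradicting the hypothesized bound on $\delta^A_{\gamma k}$. Fix $h\in\mathcal{N}(\A)\setminus\{0\}$ and let $T_0$ be the index set of the $k$ largest entries of $h$ in absolute value, so that $h_{\max (k)}=h_{T_0}$ and $h_{-\max (k)}=h_{T_0^\c}$. Suppose, toward a contradiction, that the asserted inequality fails, i.e. $\|h_{T_0}\|_q^q\ge\|h_{T_0^\c}\|_q^q$. Since $\A h=0$ we have $\A h_{T_0}=-\A h_{T_0^\c}$, and the goal is to show that this \emph{cone condition} forces $\delta^A_{\gamma k}\ge \frac{1}{\sqrt{(\gamma-1)^{1-2/q}+1}}$, contradicting the assumption and thereby establishing the strict inequality.

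The central device, adapting the sparse-representation-of-a-polytope technique of \cite{Cai2014} (and its $\ell_q$ refinement in \cite{Song2014}) to the quasi-norm setting, is to write the tail as a convex combination $h_{T_0^\c}=\sum_i\lambda_i u_i$, where $\lambda_i>0$, $\sum_i\lambda_i=1$, each $u_i$ is supported on $T_0^\c$ with $\|u_i\|_0\le(\gamma-1)k$, $\|u_i\|_q^q=\|h_{T_0^\c}\|_q^q$, and $\|u_i\|_\infty\le\alpha$ for $\alpha=\big(\|h_{T_0^\c}\|_q^q/((\gamma-1)k)\big)^{1/q}$. Constructing this decomposition requires verifying the polytope membership condition, in particular $\|h_{T_0^\c}\|_\infty\le\alpha$; I would dispose of the rare case in which this fails separately, since there the desired inequality is immediate. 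Each $v_i:=h_{T_0}+u_i$ is then $\gamma k$-sparse, and because $\sum_i\lambda_i\A v_i=\A h=0$, the vectors $h_{T_0}\pm u_i$ are exactly the objects to which the restricted isometry constant $\delta^A_{\gamma k}$ applies.

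The exponent $1-2/q$ enters through two elementary norm conversions on sparse vectors. On the head, since $h_{T_0}$ has at most $k$ nonzeros, $\|h_{T_0}\|^2\ge k^{1-2/q}\big(\|h_{T_0}\|_q^q\big)^{2/q}$; on each tail piece the cap $\|u_i\|_\infty\le\alpha$ gives $\|u_i\|^2\le((\gamma-1)k)^{1-2/q}\big(\|h_{T_0^\c}\|_q^q\big)^{2/q}$. Together with the contradiction hypothesis $\|h_{T_0^\c}\|_q^q\le\|h_{T_0}\|_q^q$, these yield the clean estimate $\sum_i\lambda_i\|u_i\|^2\le(\gamma-1)^{1-2/q}\|h_{T_0}\|^2$. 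It then remains to combine this with the RIP: using $\A h_{T_0}=-\sum_i\lambda_i\A u_i$ and applying the two-sided bounds $(1-\delta)\|\cdot\|^2\le\|\A\cdot\|^2\le(1+\delta)\|\cdot\|^2$ to the $\gamma k$-sparse vectors $h_{T_0}\pm u_i$, one aims to reach an inequality of the form $(1-\delta^2)\|h_{T_0}\|^2\le\delta^2\sum_i\lambda_i\|u_i\|^2$ with $\delta=\delta^A_{\gamma k}$. Substituting the previous bound gives $1\le\delta^2\big((\gamma-1)^{1-2/q}+1\big)$ (unless $h_{T_0}=0$, which forces $h=0$), i.e. precisely the negation of the hypothesis.

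The main obstacle is this last step: extracting the sharp factor $(1-\delta^2)=(1-\delta)(1+\delta)$, which is what produces the square root in the stated constant. A naive one-sided estimate—lower bounding $\|\A h_{T_0}\|^2\ge(1-\delta)\|h_{T_0}\|^2$ and each inner product by $\delta\|h_{T_0}\|\,\|u_i\|$—only yields $(1-\delta)\|h_{T_0}\|^2\le\delta\sum_i\lambda_i\|u_i\|^2$, hence the weaker and suboptimal condition $\delta<1/\big((\gamma-1)^{1-2/q}+1\big)$. Securing the optimal $1/\sqrt{(\gamma-1)^{1-2/q}+1}$ requires the more delicate argument of \cite{Cai2014,Song2014}, which must exploit the upper isometry bound in tandem with the lower one (for instance through the polarization of $\|\A(h_{T_0}\pm u_i)\|^2$) rather than discarding it. Checking that this manipulation carries over verbatim once the $\ell_q$ quantities $\|h_{T_0}\|_q^q$ and $\|h_{T_0^\c}\|_q^q$ have been converted to the Euclidean norms above is, I expect, the crux of the proof.
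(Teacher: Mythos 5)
Your strategy coincides with the route the paper itself points to: the paper gives no proof of this lemma at all, saying only that it ``can be proven similarly as \cite[Theorem 1]{Cai2014} and \cite[Theorems 1 and 2]{Song2014},'' and your outline is a faithful expansion of that citation. The parts you actually carry out are correct: the reduction by contradiction to the cone condition $\|h_{T_0^c}\|_q^q\le\|h_{T_0}\|_q^q$, the two norm conversions $\|h_{T_0}\|^2\ge k^{1-2/q}\bigl(\|h_{T_0}\|_q^q\bigr)^{2/q}$ and $\|u_i\|^2\le((\gamma-1)k)^{1-2/q}\bigl(\|h_{T_0^c}\|_q^q\bigr)^{2/q}$ (which correctly produce the factor $(\gamma-1)^{1-2/q}$), and the identification of the target inequality $(1-\delta^2)\|h_{T_0}\|^2\le\delta^2\sum_i\lambda_i\|u_i\|^2$, whose combination with the above is exactly the negation of the hypothesis $\delta^A_{\gamma k}<1/\sqrt{(\gamma-1)^{1-2/q}+1}$.

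Nevertheless the proposal has a genuine gap, which you yourself flag: the decisive inequality $(1-\delta^2)\|h_{T_0}\|^2\le\delta^2\sum_i\lambda_i\|u_i\|^2$ is never derived. As you note, the straightforward estimates (lower RIP on $h_{T_0}$, near-orthogonality $|\langle Ah_{T_0},Au_i\rangle|\le\delta\|h_{T_0}\|\|u_i\|$, or the expansion of $\sum_i\lambda_i\|A(h_{T_0}+u_i)\|^2$ two ways) only give $(1-\delta)\|h_{T_0}\|^2\le\delta\sum_i\lambda_i\|u_i\|^2$ and hence the strictly weaker threshold $1/\bigl((\gamma-1)^{1-2/q}+1\bigr)$; everything sharp about the lemma lives in the step you defer with ``requires the more delicate argument of \cite{Cai2014,Song2014}.'' Deferring the crux is not proving it, so as written this is an outline rather than a proof. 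There is a second gap in the decomposition itself: Cai and Zhang's polytope lemma is an $\ell_1$ statement, and for $q<1$ one cannot obtain a convex combination $h_{T_0^c}=\sum_i\lambda_iu_i$ with $\|u_i\|_q^q=\|h_{T_0^c}\|_q^q$ and $\|u_i\|_\infty\le\alpha$ by applying that lemma to the entrywise $q$-th powers, because convex combinations do not commute with the nonlinear map $x\mapsto|x|^q$; supplying this adaptation is precisely the technical content of \cite{Song2014}. Relatedly, dismissing the case $\|h_{T_0^c}\|_\infty>\alpha$ as ``rare'' and ``immediate'' is unsubstantiated: in \cite{Cai2014} the $\ell_\infty$ cap is made to hold automatically by choosing $\alpha$ in terms of the head (e.g.\ $\alpha^q=\|h_{T_0}\|_q^q/k$, which dominates $|h|_{[k]}^q$ and hence every tail entry), not in terms of the tail, and that choice changes the subsequent bookkeeping. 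In short: right skeleton and correct routine estimates, but the two steps that actually carry the theorem --- the $\ell_q$ sparse decomposition and the sharp $\delta^2$ inequality --- are missing.
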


We are now ready to establish a local sparse recovery result for \eqref{p2} with $\phi(t) = |t|^q$ for $q \in(0, 1]$  under some suitable RIP conditions.

\begin{theorem} \label{th5}
Le $q\in (0,1]$ be given and $K$ be defined in 
\eqref{l0-opt}.  Assume that $\delta^A_{K-\left\lfloor r/2 \right\rfloor} < \frac{1}{3}$ or $\delta^A_{\gamma(K-\left\lfloor r/2\right\rfloor)} < \frac{1}{\sqrt{(\gamma-1)^{1-2/q}+1}}$ for some $\gamma>1$. Suppose that $x^*$ is a sparsest solution to the system $Ax=b$.  Then $x^*$ is a strictly local minimizer of problem \eqref{p2} 
with $0\le r\le K$ and $\phi(t)=|t|^q$.
\end{theorem}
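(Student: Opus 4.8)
The plan is to derive the local null space property {\rm LNSP}$(r,\phi)$ at $x^*$ for $\phi(t)=|t|^q$ from the assumed RIP condition, and then invoke Corollary~\ref{K-loc-recovery} (equivalently Theorem~\ref{th4}) to conclude. By the Remark following Theorem~\ref{th4}, $\phi(t)=|t|^q$ with $q\in(0,1]$ satisfies Assumption~\ref{assump-phi} and \eqref{phi1}, and $0\le r\le K=\|x^*\|_0$, so the only thing left to check is the inequality \eqref{lnsp}. First I would fix $h\in\mathcal{N}(A)\setminus\{0\}$ and apply the appropriate null space inequality with $k=K-\lfloor r/2\rfloor$: under $\delta^A_{K-\lfloor r/2\rfloor}<1/3$, Lemma~\ref{RIP-k} gives $\|h_{-\max(k)}\|_q^q>\|h_{\max(k)}\|_q^q$, while under the alternative hypothesis Lemma~\ref{RIP-tk} gives the same conclusion. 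Writing $v_i:=(|h|_{[i]})^q$ so that $v_1\ge\cdots\ge v_n\ge 0$, this reads $\sum_{i>k}v_i>\sum_{i\le k}v_i$.

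The crux of the argument is to upgrade this order-$(K-\lfloor r/2\rfloor)$ inequality to the comparison $\|h_{-\max(K)}\|_q^q>\|h_{\max(K-r)}\|_q^q$, i.e. $\sum_{i>K}v_i>\sum_{i\le K-r}v_i$. Setting $s=\lfloor r/2\rfloor$ and subtracting the $s$ middle terms $v_{K-s+1},\dots,v_K$ from the left side and the $r-s$ terms $v_{K-r+1},\dots,v_{K-s}$ from the right side, the target reduces to
\[
\sum_{i>k}v_i-\sum_{i\le k}v_i \;>\; \sum_{i=K-s+1}^{K}v_i-\sum_{i=K-r+1}^{K-s}v_i .
\]
The left-hand side is strictly positive by the null space inequality. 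For the right-hand side, monotonicity of $\{v_i\}$ together with $r-s\ge s$ (which is exactly why the floor $\lfloor r/2\rfloor$ appears) yields $\sum_{i=K-s+1}^{K}v_i\le\sum_{i=K-r+1}^{K-s}v_i$ by pairing $v_{K-s+j}\le v_{K-r+j}$ for $j=1,\dots,s$ and discarding the remaining nonnegative terms; hence the right-hand side is nonpositive and the target inequality holds strictly. I expect this rearrangement to be the main obstacle, and it is precisely where the order $K-\lfloor r/2\rfloor$, rather than $K-r$, is essential.

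Finally I would translate this into \eqref{lnsp}. Since $x^*$ is a sparsest solution, $\cI_0(x^*)$ has $n-K$ elements, so as an $(n-K)$-subset it gives at least the minimal tail sum, $\sum_{i\in\cI_0(x^*)}|h_i|^q\ge\|h_{-\max(K)}\|_q^q$. Moreover, every admissible index set $J\cup\cI^{<+}_{r+1}(x^*)$ appearing in \eqref{cJ} has exactly
\[
\big(\|x^*\|_0-r-|\cI^{<+}_{r+1}(x^*)|\big)+|\cI^{<+}_{r+1}(x^*)|=K-r
\]
elements, so $\max_{J\in\cJ}\sum_{i\in J\cup\cI^{<+}_{r+1}(x^*)}|h_i|^q\le\|h_{\max(K-r)}\|_q^q$. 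Chaining these two bounds with the comparison established above gives
\[
\sum_{i\in\cI_0(x^*)}|h_i|^q\;\ge\;\|h_{-\max(K)}\|_q^q\;>\;\|h_{\max(K-r)}\|_q^q\;\ge\;\max_{J\in\cJ}\sum_{i\in J\cup\cI^{<+}_{r+1}(x^*)}|h_i|^q
\]
for every $h\in\mathcal{N}(A)\setminus\{0\}$, which is exactly \eqref{lnsp} (indeed valid for all such $h$, hence with any $\epsilon>0$). Corollary~\ref{K-loc-recovery} then yields that $x^*$ is a strictly local minimizer of \eqref{p2} with $\phi(t)=|t|^q$, completing the proof.
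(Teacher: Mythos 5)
Your proposal is correct and follows essentially the same route as the paper's own proof: both derive the local null space property {\rm LNSP}($r$,$\phi$) at $x^*$ by invoking Lemmas \ref{RIP-k} and \ref{RIP-tk} at order $K-\left\lfloor r/2 \right\rfloor$, upgrade the resulting inequality to $\|h_{-\max(K)}\|_q^q > \|h_{\max(K-r)}\|_q^q$ via the same middle-terms rearrangement and monotonicity of the order statistics, and then bound the sums over $\cI_0(x^*)$ and $J \cup \cI^{<+}_{r+1}(x^*)$ by cardinality before concluding through Corollary \ref{K-loc-recovery}. The only differences are notational (your $v_i$ and explicit pairing argument versus the paper's displayed identities), not mathematical.
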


\begin{proof}
Since $\delta^A_{K-\left\lfloor r/2 \right\rfloor} < 1/3$ or $\delta^A_{\gamma(K-\left\lfloor r/2\right\rfloor)} < \frac{1}{\sqrt{(\gamma-1)^{1-2/q}+1}}$ for some $\gamma>1$, it follows from 
Lemmas \ref{RIP-k} and \ref{RIP-tk} with $k=K-\left\lfloor r/2\right\rfloor$ that 
\beq \label{ineq7}
\|h_{-\max(K-\left\lfloor r/2 \right\rfloor)}\|_q^q > \|h_{\max(K-\left\lfloor r/2\right\rfloor)}\|_q^q\quad \quad \forall h \in \mathcal{N}(\A) \backslash \{ 0 \}.
\eeq
In addition, we observe that
\[
\ba{lcl}
\left\|h_{-\max(K-\left\lfloor r/2 \right\rfloor)}\right\|_q^q &=& \|h_{-\max(K)}\|_q^q + \sum\limits_{i=K-\left\lfloor r/2 \right\rfloor +1}^K |h|_{[i]}^q,\\ [15pt]
 \left\|h_{\max(K-\left\lfloor r/2 \right\rfloor)}\right\|_q^q &=& \|h_{\max(K-r)}\|_q^q + \sum\limits_{i=K-r+1}^{K-\left\lfloor r/2 \right\rfloor} |h|_{[i]}^q.
\ea
\]
By these relations, \eqref{ineq7} and the fact $\sum\limits_{i=K-r+1}^{K-\left\lfloor r/2 \right\rfloor} |h|_{[i]}^q \geq \sum\limits_{i=K-\left\lfloor r/2 \right\rfloor +1}^K |h|_{[i]}^q$, we have   that for all $h \in \mathcal{N}(\A) \backslash \{ 0 \}$,
\beqa
\|h_{-\max(K)}\|_q^q - \|h_{\max(K-r)}\|_q^q  &=& \|h_{-\max(K-\left\lfloor r/2 \right\rfloor)}\|_q^q-\|h_{\max(K-\left\lfloor r/2 \right\rfloor)}\|_q^q \nn \\ 
&& +\sum\limits_{i=K-r+1}^{K-\left\lfloor r/2 \right\rfloor} |h|_{[i]}^q 
-\sum\limits_{i=K-\left\lfloor r/2 \right\rfloor +1}^K |h|_{[i]}^q \ > \ 0. 
\label{hK-ineq}
\eeqa
Let $\cJ$ be defined in \eqref{cJ}. Then for any $J\in\cJ$, one has 
$J \subseteq \cI_{r+1}^=(x^*)$ and $|J|=\|x^*\|_0-r-|\cI_{r+1}^{<+}(x^*)|$. Using these and $\|x^*\|_0=K$, we obtain that for all $h\in\Re^n$, 
\begin{align*}
\sum\limits_{i \in J \cup \cI_{r+1}^{<+}(x^*)} |h_i|^q &  \ \leq  \
\|h_{\max(\|x^*\|_0-r)}\|^q_q \ =\ \|h_{\max(K-r)}\|^q_q,\\
\sum\limits_{i \in \cI_0(x^*)} |h_i|^q & \ \geq \ \|h_{-\max(\|x^*\|_0)}\|^q_q \ = \ \|h_{-\max(K)}\|^q_q .
\end{align*}
It then follows from these inequalities and \eqref{hK-ineq} that for all $h \in \mathcal{N}(\A) \backslash \{ 0 \}$, 
\beq \nn
\sum\limits_{i\in \cI_0(x^*)} |h_i|^q -\max\limits_{J\in\cJ} \sum\limits_{i\in J \cup \cI_{r+1}^{<+}(x^*)} |h_i|^q \ \geq \ \|h_{-\max(K)}\|^q_q - \|h_{\max(K-r)}\|^q_q \ > \ 0.
\eeq
This implies that {\rm{LNSP}}($r$,$\phi$) holds at $x^*$ for $\phi(t)=|t|^q$. In addition, it is not hard to observe that such $\phi$ satisfies the assumptions stated in Theorem \ref{th4}. The conclusion of this theorem then follows from Corollary \ref{K-loc-recovery}.
\end{proof}

\gap

{\bf Remark:}  Cai and Zhang \cite{Cai2013,Cai2014} recently established sparse recovery by model \eqref{p2} with $r=0$ and $\phi(\cdot)=|\cdot|$ under the condition $\delta^A_k < 1/3$ or $\delta^A_{\gamma k}  < \frac{1}{\sqrt{(\gamma-1)^{-1}+1}}$. In addition, Song and Xia \cite{Song2014} established sparse recovery by model \eqref{p2} with $r=0$ and $\phi(\cdot)=|\cdot|^q$ for $q\in(0,1]$ under the condition  $\delta^A_{\gamma k}   < \frac{1}{\sqrt{(\gamma-1)^{1-2/q}+1}}$ for some $\gamma>1$. As seen from Theorem \ref{th5}, our RIC bounds 
for local sparse recovery are weaker than their bounds when $0<r \le K$. Clearly, there exist some problems satisfying our RIP bounds but violating their bounds. 


\subsection{Global sparse recovery by \eqref{p2}} 

In this subsection we study some sufficient conditions for global recovery of 
a sparsest solution $x^*$ to the system $Ax=b$ by model \eqref{p2}. In particular, we show that if a global null space property holds, $x^*$ is a  
global minimizer of \eqref{p2}. We also establish a global recovery result 
for \eqref{p2} under some RIP conditions and the assumption $\phi(\cdot)=|\cdot|^q$ for $q\in(0,1]$. To proceed, we first introduce the definition of 
global null space property.

\begin{definition}[global null space property]
The {\it global null space property} {\rm{GNSP}}($r$,$\phi$) holds 
for the system $Ax=b$ at a solution $x^*$ with $\|x^*\|_0 \ge r$ if 
\beq \label{gnsp}
\sum\limits_{i \in J_0} \phi (|h_i|)-\sum\limits_{i \in J_1} \phi(|h_i|) > 0 \quad\quad \forall h \in \mathcal{N}(\A) \backslash \{ 0 \} 
\eeq
holds for every $(J_0,J_1)\in\widetilde \cJ$, where  
\beq \label{tcJ}
\widetilde \cJ = \left\{(J_0, J_1): J_0 \subseteq \cI_0(x^*), \  J_1 \subseteq \cI^\c_0(x^*), \ 
|J_0| + |J_1| = n-r
\right\}.
\eeq
\end{definition}

\vgap

We next show that for a class of $\phi$, a solution $x^*$ of $\A x = \b$ is a unique global minimizer of \eqref{p2} if the global null space property holds at $x^*$.

\begin{theorem} \label{th7}
Suppose that Assumption \ref{assump-phi} and  \eqref {phi1} hold for $\phi$. Let $x^* \in \Re^n$ be such that $\A x^* = \b$. Assume that  {\rm{GNSP}}($r$,$\phi$) holds at $x^*$ for some $0\le r \le \|x^*\|_0$. Then $x^*$ is a unique global minimizer of problem \eqref{p2} with such $\phi$ and $r$.
\end{theorem}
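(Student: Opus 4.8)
The plan is to prove global optimality by showing that any feasible point $x^* + h$ with $h \in \mathcal{N}(\A) \backslash \{0\}$ has strictly larger objective value than $x^*$, paralleling the structure of the proof of Theorem \ref{th4} but now exploiting the \emph{global} (rather than local) null space property, which holds for all nonzero null-space directions rather than only sufficiently small ones. First I would take an arbitrary feasible $\tilde x \ne x^*$ and write it as $\tilde x = x^* + h$ with $h = \tilde x - x^* \in \mathcal{N}(\A) \backslash \{0\}$, since both satisfy $\A x = \b$. The goal is to bound $\sum_{i=r+1}^n \phi(|x^* + h|_{[i]})$ from below by $\sum_{i=r+1}^n \phi(|x^*|_{[i]})$ plus a strictly positive quantity.

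The key difficulty, compared to Theorem \ref{th4}, is that for arbitrary (not small) $h$ we can no longer rely on the inclusions $\cI^<_{r+1}(x^*) \subseteq \cI^<_{r+1}(x^*+h)$ and $\cI^>_{r+1}(x^*) \subseteq \cI^>_{r+1}(x^*+h)$ that let us pin down which indices fall into the tail sum. Instead, I would argue that for \emph{any} $h$, the sorted tail sum $\sum_{i=r+1}^n \phi(|x^*+h|_{[i]})$ can be lower bounded by selecting an appropriate index partition: there is some pair $(J_0, J_1) \in \widetilde{\cJ}$ (i.e.\ $J_0 \subseteq \cI_0(x^*)$, $J_1 \subseteq \cI^\c_0(x^*)$, $|J_0| + |J_1| = n - r$) such that the $n-r$ smallest entries of $|x^*+h|$ are indexed within $J_0 \cup J_1$ together with the remaining nonzero indices. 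The crucial combinatorial fact is that the tail (the $n-r$ smallest sorted entries) must omit at most $r$ of the nonzero entries of $x^*$, so the complement of the retained large entries is exactly a set of the form required by $\widetilde{\cJ}$. Using $\phi(0) = 0$ so that $\phi(|x^*_i|) = 0$ for $i \in J_0 \subseteq \cI_0(x^*)$, and applying the triangle-type inequality \eqref{phi1} together with monotonicity exactly as in \eqref{phi-tineq}, I would obtain
\[
\sum_{i=r+1}^n \phi(|x^*+h|_{[i]}) \ge \sum_{i=r+1}^n \phi(|x^*|_{[i]}) + \sum_{i \in J_0} \phi(|h_i|) - \sum_{i \in J_1} \phi(|h_i|).
\]

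The final step invokes \eqref{gnsp}: since $(J_0, J_1) \in \widetilde{\cJ}$, the quantity $\sum_{i \in J_0} \phi(|h_i|) - \sum_{i \in J_1} \phi(|h_i|)$ is strictly positive for every $h \in \mathcal{N}(\A) \backslash \{0\}$, yielding $\sum_{i=r+1}^n \phi(|x^*+h|_{[i]}) > \sum_{i=r+1}^n \phi(|x^*|_{[i]})$ and hence that $x^*$ strictly dominates every other feasible point, which gives both global optimality and uniqueness simultaneously. The main obstacle I anticipate is the bookkeeping in the combinatorial step: justifying rigorously that for an arbitrary (possibly large) perturbation $h$, the sorted tail sum can always be lower bounded by a term of the form $\sum_{i \in J_0 \cup \cI^{<+}\text{-type indices}} \phi(|x^*_i + h_i|)$ with the leftover index set genuinely lying in $\widetilde{\cJ}$. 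I would handle this by noting that dropping the $r$ largest entries of $x^*+h$ can eliminate at most $r$ of the nonzero coordinates of $x^*$, so at least $\|x^*\|_0 - r$ nonzero coordinates survive into the tail, forcing the retained-versus-discarded split to match the $|J_0| + |J_1| = n-r$ cardinality constraint defining $\widetilde{\cJ}$; the monotonicity of $\phi$ ensures that choosing the genuinely smallest entries only decreases the sum, so the lower bound is valid for the worst case over admissible partitions.
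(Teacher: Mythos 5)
Your proposal is correct and takes essentially the same route as the paper's own proof: partition the index set of the $n-r$ smallest entries of $|x^*+h|$ into $J_0 \subseteq \cI_0(x^*)$ and $J_1 \subseteq \cI_0^\c(x^*)$ (so that $(J_0,J_1)\in\widetilde\cJ$ automatically), apply \eqref{phi1} with monotonicity as in \eqref{phi-tineq}, use the counting argument $|J_1| \ge \|x^*\|_0 - r$ to get $\sum_{i\in J_1}\phi(|x^*_i|) \ge \sum_{i=r+1}^n \phi(|x^*|_{[i]})$, and invoke {\rm GNSP}($r$,$\phi$) for strict positivity. The only cosmetic difference is that the paper writes the sorted tail sum as an \emph{exact equality} $\sum_{i\in J_1}\phi(|x^*_i+h_i|)+\sum_{i\in J_0}\phi(|h_i|)$ (valid since $x^*_i=0$ on $J_0$), which immediately disposes of the combinatorial bookkeeping you flagged as the main obstacle.
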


\begin{proof}
One can observe that for every $h\in\Re^n$, there exist some $J_0 \subseteq \cI_0(x^*)$ and $J_1 \subseteq \cI_0^\c(x^*)$ (dependent on $h$) with $|J_0| + |J_1| = n-r$ such that 
\beq \nn
\sum\limits_{i=r+1}^n \phi(|x^*+h|_{[i]}) = \sum\limits_{i \in J_1} \phi(|x^*_i + h_i|) + \sum\limits_{i \in J_0} \phi(|h_i|).
\eeq
We also notice that \eqref{phi-tineq} holds here due to \eqref {phi1} and the monotonicity of $\phi$. 
It then follows from the above equality and \eqref{phi-tineq} that
\beq \label{ineq1}
\sum\limits_{i=r+1}^n \phi(|x^*+h|_{[i]}) \geq \sum\limits_{i\in J_1}\phi(|x^*_i|) - \sum\limits_{i\in J_1}\phi(|h_i|) + \sum\limits_{i\in J_0} \phi(|h|_{i}).
\eeq
Observe that $\phi(|x^*|_{[i]})=0$ for all $i>\|x^*\|_0$ due to $\phi(0)=0$. 
Since $J_0 \subseteq \cI_0(x^*)$, one has $|J_0| \leq n-\|x^*\|_0$, which together with $|J_0| + |J_1| = n-r$ implies $|J_1| \geq \|x^*\|_0-r$. Using this, $J_1 \subseteq \cI_0^\c(x^*)$ and $\phi(|x^*|_{[i]})=0$ for all $i>\|x^*\|_0$, one can observe that 
\beq \label{phixs-lbd}
\sum\limits_{i\in J_1} \phi(|x^*_i|) \geq \sum\limits_{i=r+1}^{\|x^*\|_0} \phi(|x^*|_{[i]}) = \sum\limits_{i=r+1}^n \phi(|x^*|_{[i]}).
\eeq
In addition,  we see that $(J_0,J_1)\in\widetilde \cJ$, where $\widetilde\cJ$ is given in \eqref{tcJ}. It thus follows from 
the global null space property  that \eqref{gnsp} holds for the above $(J_0,J_1)$.  In view of \eqref{gnsp}, \eqref{ineq1} and \eqref{phixs-lbd},  we obtain that  
\[
\sum\limits_{i=r+1}^n \phi(|x^*+h|_{[i]}) > \sum\limits_{i=r+1}^n \phi(|x^*|_{[i]}) \quad\quad \forall h \in \mathcal{N}(\A) \backslash \{ 0 \},
\]
and hence $x^*$ is a unique global minimizer of problem \eqref{p2}.
\end{proof}

\gap

As an immediate consequence,  we obtain that under some suitable 
assumption, a sparsest solution $x^*$ to the system $Ax=b$ is a unique global minimizer of \eqref{p2}. 

\begin{corollary}[global sparse recovery] \label{K-g-recovery}
Suppose that Assumption \ref{assump-phi} and  \eqref {phi1} hold for $\phi$, and $x^*$ is a sparsest solution to the system $Ax=b$. Assume that the global null space property {\rm{GNSP}}($r$,$\phi$) holds at $x^*$ for some $0\le r\le K$, where $K$ is defined in \eqref{l0-opt}.  Then $x^*$ is a unique global minimizer of problem \eqref{p2} with such $\phi$ and $r$.
\end{corollary}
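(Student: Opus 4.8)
The plan is to derive this directly from Theorem \ref{th7}, since the corollary is nothing more than the specialization of that theorem to the case where $x^*$ happens to be a sparsest solution. The only gap to bridge is the relationship between the range condition $0 \le r \le K$ stated here and the range condition $0 \le r \le \|x^*\|_0$ required in Theorem \ref{th7}.

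First I would observe that because $x^*$ is a sparsest solution to $Ax = b$, and $K$ is defined in \eqref{l0-opt} as the minimal cardinality over all solutions of the system, we have $\|x^*\|_0 = K$. Consequently the hypothesis $0 \le r \le K$ is literally the hypothesis $0 \le r \le \|x^*\|_0$ demanded by Theorem \ref{th7}, so the admissible range of $r$ matches exactly.

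Next, the remaining hypotheses transfer verbatim: Assumption \ref{assump-phi} and condition \eqref{phi1} are assumed for $\phi$, the point $x^*$ satisfies $Ax^* = b$, and {\rm{GNSP}}($r$,$\phi$) is assumed to hold at $x^*$. These are precisely the standing assumptions of Theorem \ref{th7}. I would therefore simply invoke Theorem \ref{th7} to conclude that $x^*$ is the unique global minimizer of problem \eqref{p2} for this $\phi$ and $r$, which is exactly the asserted conclusion.

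There is essentially no technical obstacle: the entire analytic content of the statement is already carried by Theorem \ref{th7}, whose proof does the work via the inequalities \eqref{ineq1} and \eqref{phixs-lbd} together with the global null space property. The single point that must not be overlooked is the identification $\|x^*\|_0 = K$; without it the stated range $0 \le r \le K$ would not line up with the theorem's hypothesis. Everything else is bookkeeping.
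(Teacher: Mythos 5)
Your proposal is correct and coincides with the paper's own (implicit) argument: the corollary is stated there as an immediate consequence of Theorem \ref{th7}, with the only point of substance being exactly the identification $\|x^*\|_0 = K$ that you make, which aligns the range $0 \le r \le K$ with the hypothesis $0 \le r \le \|x^*\|_0$ (and also ensures the requirement $\|x^*\|_0 \ge r$ in the definition of {\rm GNSP}($r$,$\phi$)). Nothing further is needed.
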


The above global sparse recovery is established based on the global null space property {\rm{GNSP}}($r$,$\phi$).  We next show that the global  sparse recovery holds under some RIP conditions when $\phi(t) = |t|^q$ for some $q \in(0, 1]$.

\begin{theorem} \label{th8}
Le $q\in (0,1]$ be given and $K$ be defined in 
\eqref{l0-opt}.  Assume that $\delta^A_{K+\left\lfloor r/2 \right\rfloor} < 1/3$ or $\delta^A_{\gamma(K+\left\lfloor r/2\right\rfloor)} < \frac{1}{\sqrt{(\gamma-1)^{1-2/q}+1}}$ for some $\gamma>1$. Suppose that $x^*$ is a sparsest solution to the system $Ax=b$.  Then $x^*$ is a unique global minimizer of problem \eqref{p2} 
with $0 \le r \le K$ and $\phi(t)=|t|^q$.
\end{theorem}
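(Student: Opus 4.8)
The plan is to parallel the local result, Theorem \ref{th5}: I would deduce the global null space property {\rm GNSP}($r$,$\phi$) at $x^*$ for $\phi(t)=|t|^q$ from the stated RIP hypothesis, and then invoke Corollary \ref{K-g-recovery} (using, as noted after Theorem \ref{th4}, that $\phi(t)=|t|^q$ with $q\in(0,1]$ satisfies Assumption \ref{assump-phi} and \eqref{phi1}) to conclude that $x^*$ is the unique global minimizer of \eqref{p2}. The whole task thus reduces to verifying \eqref{gnsp} for every pair $(J_0,J_1)\in\widetilde{\cJ}$ in \eqref{tcJ} and every $h\in\mathcal{N}(\A)\backslash\{0\}$.

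First I would recast GNSP in terms of the order statistics of $|h|$. Since $\|x^*\|_0=K$, we have $|\cI_0^\c(x^*)|=K$ and $|\cI_0(x^*)|=n-K$. For any $(J_0,J_1)\in\widetilde{\cJ}$ the constraints $J_1\subseteq\cI_0^\c(x^*)$, $J_0\subseteq\cI_0(x^*)$ and $|J_0|+|J_1|=n-r$ force $|J_1|\le K$ and $|J_0|\ge n-K-r$. As all summands are nonnegative, this yields the uniform bounds
\[
\sum_{i\in J_1}|h_i|^q \ \le\ \|h_{\max(K)}\|_q^q,
\qquad
\sum_{i\in J_0}|h_i|^q \ \ge\ \|h_{-\max(K+r)}\|_q^q ,
\]
the second because the smallest $n-K-r$ entries of $h$ restricted to $\cI_0(x^*)$ dominate the globally smallest $n-K-r$ entries. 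So a sufficient condition for GNSP would be the single order-statistic inequality $\|h_{-\max(K+r)}\|_q^q>\|h_{\max(K)}\|_q^q$. The RIP input comes from Lemmas \ref{RIP-k} and \ref{RIP-tk}: under either hypothesis with $k=K+\lfloor r/2\rfloor$ they give $\|h_{-\max(K+\lfloor r/2\rfloor)}\|_q^q>\|h_{\max(K+\lfloor r/2\rfloor)}\|_q^q$ for all $h\in\mathcal{N}(\A)\backslash\{0\}$, and I would try to pass from this to the target by peeling off the middle block of order statistics (ranks $K+1$ through $K+r$), dual to the identity \eqref{hK-ineq}. Writing $p=\lfloor r/2\rfloor$,
\[
\|h_{-\max(K+r)}\|_q^q-\|h_{\max(K)}\|_q^q
=\Big[\|h_{-\max(K+p)}\|_q^q-\|h_{\max(K+p)}\|_q^q\Big]
+\sum_{i=K+1}^{K+p}|h|_{[i]}^q-\sum_{i=K+p+1}^{K+r}|h|_{[i]}^q .
\]

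This identity is exactly where the main obstacle concentrates. In Theorem \ref{th5} the analogous leftover was automatically nonnegative, because there the block of larger order statistics carried at least as many terms as the block of smaller ones; here the roles are reversed, since the larger block $\sum_{i=K+1}^{K+p}|h|_{[i]}^q$ has only $p=\lfloor r/2\rfloor$ terms while the smaller block $\sum_{i=K+p+1}^{K+r}|h|_{[i]}^q$ has $\lceil r/2\rceil\ge p$ terms. Consequently the leftover can be negative, so the coarse reduction above is too lossy for odd $r$ (for even $r$ it closes immediately). The substantive step is therefore to avoid discarding the support structure prematurely: keep $\cI_0^\c(x^*)$ intact and compare it against $\cI_0(x^*)$ with only its $\lfloor r/2\rfloor$ largest entries removed — a set of cardinality $K+\lfloor r/2\rfloor$ to which Lemmas \ref{RIP-k}--\ref{RIP-tk} apply verbatim — and then recover the full GNSP inequality \eqref{gnsp} from this more economical comparison, using the monotonicity $\delta^A_{K}\le\delta^A_{K+\lfloor r/2\rfloor}$ of the restricted isometry constant to handle the residual block. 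Making this accounting close at the threshold $K+\lfloor r/2\rfloor$, rather than the naive $K+r$, is the crux of the argument; I expect it, and not the appeal to Corollary \ref{K-g-recovery}, to be the essential difficulty, and to be precisely the reason the global recovery threshold sits at $K+\lfloor r/2\rfloor$.
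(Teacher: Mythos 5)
Your reduction is exactly the paper's: for every $(J_0,J_1)\in\widetilde{\cJ}$ you bound $\sum_{i\in J_1}|h_i|^q\le\|h_{\max(K)}\|_q^q$ and $\sum_{i\in J_0}|h_i|^q\ge\|h_{-\max(K+r)}\|_q^q$, reduce \eqref{gnsp} to the single order-statistic inequality $\|h_{-\max(K+r)}\|_q^q>\|h_{\max(K)}\|_q^q$, try to get that from Lemmas \ref{RIP-k}--\ref{RIP-tk} by peeling off the middle block of order statistics, and finish via Corollary \ref{K-g-recovery}; you are also right that the corollary is not where the difficulty sits. For even $r$ your argument is complete and coincides with the paper's proof. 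For odd $r$, the obstruction you isolate is genuine --- but it is in fact a defect of the paper's own proof rather than something you were expected to overcome. The paper writes the peeling identity at the split point $K+\lceil r/2\rceil$, so its middle-block comparison pits $\lceil r/2\rceil$ larger order statistics against $\lfloor r/2\rfloor$ smaller ones and the leftover in \eqref{hK-ineq1} is nonnegative; however, the positivity of $\|h_{-\max(K+\lceil r/2\rceil)}\|_q^q-\|h_{\max(K+\lceil r/2\rceil)}\|_q^q$ used there is the conclusion of Lemmas \ref{RIP-k}--\ref{RIP-tk} at order $K+\lceil r/2\rceil$, while the hypothesis and \eqref{rip-ineq} are stated at $K+\lfloor r/2\rfloor$. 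Since the inequality $\|h_{-\max(k)}\|_q^q>\|h_{\max(k)}\|_q^q$ at order $k$ does not imply the same inequality at order $k+1$, the written proof only supports the theorem with $\lceil r/2\rceil$ in the RIP subscripts --- the choice consistent with Lemma \ref{lem:errbd} and Theorem \ref{mth1}, and presumably the intended statement.

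What keeps your proposal from being a proof is that the repair sketched in your last paragraph does not close this gap. Applying Lemmas \ref{RIP-k}--\ref{RIP-tk} in their arbitrary-support form (for any $|T|=k$ one has $\sum_{i\in T}|h_i|^q\le\|h_{\max(k)}\|_q^q$ and $\sum_{i\notin T}|h_i|^q\ge\|h_{-\max(k)}\|_q^q$) to the set $T=\cI^\c_0(x^*)\cup M$, where $M$ consists of the $\lfloor r/2\rfloor$ largest entries of $|h|$ on $\cI_0(x^*)$, so that $|T|=K+\lfloor r/2\rfloor$, gives
\[
\sum_{i\in\cI_0(x^*)\setminus M}|h_i|^q \;>\; \sum_{i\in\cI^\c_0(x^*)}|h_i|^q+\sum_{i\in M}|h_i|^q .
\]
The worst pair in \eqref{tcJ} is $J_1=\cI^\c_0(x^*)$ together with $J_0=\cI_0(x^*)$ stripped of its $r$ largest entries, i.e.\ $J_0=\cI_0(x^*)\setminus(M\cup M')$, where $M'$ collects the entries of ranks $\lfloor r/2\rfloor+1,\ldots,r$ of $|h|$ on $\cI_0(x^*)$. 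To deduce $\sum_{i\in J_0}|h_i|^q>\sum_{i\in J_1}|h_i|^q$ from the display you still need $\sum_{i\in M}|h_i|^q\ge\sum_{i\in M'}|h_i|^q$, which again compares $\lfloor r/2\rfloor$ larger terms with $\lceil r/2\rceil$ smaller ones and fails when, for instance, all of these entries are equal and nonzero --- the identical parity obstruction you already diagnosed. Monotonicity of the RIC cannot rescue this: $\delta^A_K\le\delta^A_{K+\lfloor r/2\rfloor}$ only yields the null-space inequality at orders \emph{below} $K+\lfloor r/2\rfloor$, whereas what is needed is the order $K+\lceil r/2\rceil$ \emph{above} it. So the correct repair is not a cleverer accounting at $K+\lfloor r/2\rfloor$, but simply to assume (equivalently, to read the theorem as assuming) the RIP bound at order $K+\lceil r/2\rceil$, after which your own argument, like the paper's, closes verbatim.
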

  
\begin{proof}
Since $\delta^A_{K+\left\lfloor r/2 \right\rfloor} < 1/3$ or $\delta^A_{\gamma(K+\left\lfloor r/2\right\rfloor)} < \frac{1}{\sqrt{(\gamma-1)^{1-2/q}+1}}$ for some $\gamma>1$, it follows from 
Lemmas \ref{RIP-k} and \ref{RIP-tk} with $k=K+\left\lfloor r/2\right\rfloor$ that 
\beq \label{rip-ineq}
\|h_{-\max(K+\left\lfloor r/2 \right\rfloor)}\|_q^q > \|h_{\max(K+\left\lfloor r/2\right\rfloor)}\|_q^q\quad \quad \forall h \in \mathcal{N}(\A) \backslash \{ 0 \}.
\eeq
Observe that
\begin{align*}
& \|h_{-\max(K+\left\lceil r/2 \right\rceil)}\|_q^q = \|h_{-\max(K+r)}\|_q^q + \sum\limits_{i=K+\left\lceil r/2 \right\rceil +1}^{K+r} |h|_{[i]}^q,\\ 
& \|h_{\max(K+\left\lceil r/2\right\rceil)}\|_q^q = \|h_{\max(K)}\|_q^q + \sum\limits_{i=K+1}^{K+\left\lceil r/2\right\rceil } |h|_{[i]}^q.
\end{align*}
Using these relations, \eqref{rip-ineq} and the fact  $\sum\limits_{i=K+1}^{K+\left\lceil r/2 \right\rceil } |h|_{[i]}^q \geq \sum\limits_{i=K+\left\lceil r/2 \right\rceil +1}^{K+r} |h|_{[i]}^q$, we see  that for all $h \in \mathcal{N}(\A) \backslash \{ 0 \}$, 
\beqa
\|h_{-\max(K+r)}\|_q^q - \|h_{\max(K)}\|_q^q &=& \|h_{-\max(K+\left\lceil r/2 \right\rceil)}\|_q^q - \|h_{\max(K+\left\lceil r/2 \right\rceil)}\|_q^q \nn \\
 && + \sum\limits_{i=K+1}^{K+\left\lceil r/2 \right\rceil } |h|_{[i]}^q - \sum\limits_{i=K+\left\lceil r/2 \right\rceil +1}^{K+r} |h|_{[i]}^q 
 \ > \ 0. \label{hK-ineq1}
\eeqa
Let $\widetilde\cJ$ be defined in \eqref{tcJ}. For any $(J_0,J_1)\in\widetilde\cJ$, one has $J_0 \subseteq \cI_0(x^*)$ and $J_1 \subseteq \cI_0^\c(x^*)$  with $|J_0| + |J_1| = n-r$. Since $J_1 \subseteq \cI^\c_0(x^*)$, one has $|J_1| \leq \|x^*\|_0$, which together with $|J_0| + |J_1| = n-r$ implies $|J_0| \geq n-\|x^*\|_0-r$. Using these and $\|x^*\|_0=K$, we obtain that for all $h\in\Re^n$, 
\begin{align*}
\sum\limits_{i \in J_0} |h_i|^q & \ \ge \
\|h_{-\max(\|x^*\|_0+r)}\|^q_q \ =\ \|h_{-\max(K+r)}\|^q_q,\\
\sum\limits_{i \in J_1} |h_i|^q & \ \leq \ \|h_{\max(\|x^*\|_0)}\|^q_q \ = \ \|h_{\max(K)}\|^q_q .
\end{align*}
It then follows from these relations and \eqref{hK-ineq1} that for all $h \in \mathcal{N}(\A) \backslash \{ 0 \}$ and $(J_0,J_1)\in\widetilde\cJ$, 
\beq \nn
\sum\limits_{i\in J_0} |h_i|^q -\sum\limits_{i\in J_1} |h_i|^q \ \geq \ \|h_{-\max(K+r)}\|^q_q - \|h_{\max(K)}\|^q_q \ > \ 0.
\eeq
This implies that {\rm{GNSP}}($r$,$\phi$) holds at $x^*$ for $\phi(t)=|t|^q$. In addition, such $\phi$ satisfies the assumptions stated in Theorem \ref{th7}. The conclusion of this theorem then follows from Corollary \ref{K-g-recovery}.
\end{proof}

\subsection{Stable recovery by \eqref{pL1}}

In this subsection we study the stable recoverability of model \eqref{pL1} for a sparsest solution $\tx^*$ to the noisy system $Ax=b+\xi$ with $\|\xi\| \le \sigma$. In particular, we derive an error bound between $\tx^*$ and a global minimizer of \eqref{pL1} with $\phi(\cdot)=|\cdot|$. To proceed, we establish a technical lemma as follows. 

\begin{lemma} \label{lem:errbd}
Suppose that Assumption \ref{assump-phi} and \eqref {phi1} hold for $\phi$. Assume that $x^*$ is an optimal solution of \eqref{pL1}. Let $x \in \Re^n$ be a feasible solution of problem \eqref{pL1} and $h = x-x^*$. Then there holds:
\beq
\sum^n_{i=k+\left\lceil r/2 \right\rceil+1} \phi(|h|_{[i]}) \ \le \  \sum^{k+\left\lceil r/2 \right\rceil}_{i=1} \phi(|h|_{[i]}) +  2 \sum^n_{i=k+1} \phi(|x|_{[i]}) \label{phi-err}
\eeq
for all $k\ge 1$, $r\ge 0$ and $k+r \le n-1$.
\end{lemma}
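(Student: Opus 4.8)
The plan is to combine the optimality of $x^*$ with two ``triangle-type'' consequences of \eqref{phi1}, and then to push the resulting estimate through a shifting argument on the sorted entries of $h$, $x$ and $x^*=x-h$; the coefficient $2$ and the half-shift $\lceil r/2\rceil$ will both emerge from this last step. First I would record the pointwise facts. Since $|b|=|-b|$ we have $\big||a|-|b|\big|\le|a+b|$ and $\big||a|-|b|\big|\le|a-b|$, so \eqref{phi1} together with the monotonicity of $\phi$ yields, for $x^*=x-h$,
\[
\phi(|h_i|)-\phi(|x_i|)\ \le\ \phi(|x^*_i|)\ \le\ \phi(|x_i|)+\phi(|h_i|)\qquad\forall i.
\]

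Next I would lift these to \emph{sorted} entries, which is what the statement is about. From the entrywise bound $|h_\alpha|\le|x^*_\alpha|+|x_\alpha|$ and the order-statistic inequality $(u+v)_{[i+j-1]}\le u_{[i]}+v_{[j]}$ for nonnegative vectors, one obtains $|h|_{[i+j-1]}\le|x^*|_{[i]}+|x|_{[j]}$, and symmetrically $|x^*|_{[i+j-1]}\le|x|_{[i]}+|h|_{[j]}$. Feeding these into \eqref{phi1} and using monotonicity produces the two shifted inequalities
\[
\phi(|x^*|_{[i]})\ \ge\ \phi(|h|_{[i+j-1]})-\phi(|x|_{[j]}),\qquad \phi(|x^*|_{[i+j-1]})\ \le\ \phi(|x|_{[i]})+\phi(|h|_{[j]}),
\]
valid for all $i,j\ge1$ with $i+j-1\le n$. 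These are the engine of the argument: each unit of shift $j-1$ applied to an index of $h$ is paid for by exactly one term $\phi(|x|_{[j]})$.

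With these in hand, the strategy is as follows. Optimality of $x^*$ for \eqref{pL1} and feasibility of $x$ give $\sum_{i=r+1}^n\phi(|x^*|_{[i]})\le\sum_{i=r+1}^n\phi(|x|_{[i]})$. I would lower-bound the left-hand side by applying the first shifted inequality to each term $\phi(|x^*|_{[i]})$ with index-dependent offsets $j=j(i)$, chosen so that the $\phi(|h|)$-contributions reassemble the tail $\sum_{i=k+\lceil r/2\rceil+1}^n\phi(|h|_{[i]})$ appearing on the left of \eqref{phi-err}, while the accompanying penalties $\phi(|x|_{[j(i)]})$ can be arranged to sum to a single copy of the $x$-tail $\sum_{i=k+1}^n\phi(|x|_{[i]})$; the second copy is supplied by the right-hand side $\sum_{i=r+1}^n\phi(|x|_{[i]})$ of the optimality inequality. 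Rearranging, and using that a sorted tail sum is dominated by the sum over any index set of the same size to move the surviving head entries of $h$ into $\sum_{i=1}^{k+\lceil r/2\rceil}\phi(|h|_{[i]})$, would give \eqref{phi-err}. The half-shift $\lceil r/2\rceil$ originates here: because the partial regularizer discards the $r$ leading entries of \emph{both} $x^*$ and $x$, reconciling the two sorted sequences forces the $h$-index to advance about twice as fast as the $x$-index, which is exactly the sorted ``block-domination'' device (an earlier block of $\lceil r/2\rceil$ sorted terms dominating a later block of $\lfloor r/2\rfloor$ terms) already used in the proofs of Theorems \ref{th5} and \ref{th8}.

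The main obstacle is precisely this combinatorial bookkeeping. A crude accounting of the $r$ discarded leading entries—bounding their total effect by a full top-$r$ block of $h$—is too lossy: it leaves an uncontrolled term $\sum_{i=1}^r\phi(|h|_{[i]})$ on the right and does \emph{not} imply \eqref{phi-err}. One must instead choose the offsets $j(i)$ so that the images $i+j(i)-1$ cover the target tail of $h$ contiguously and the penalties $\phi(|x|_{[j(i)]})$ collapse to exactly one copy of the $x$-tail, nothing over- or under-counted. Carrying this out cleanly requires a short case analysis (according to whether $r\le k$ or $r>k$, and the parity of $r$) together with the monotonicity of sorted partial sums; I would verify each case by the same block-splitting estimate that produces $\lfloor r/2\rfloor$ in Theorems \ref{th5} and \ref{th8}. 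Everything else—the two shifted inequalities and the final rearrangement—is routine once \eqref{phi1} is available.
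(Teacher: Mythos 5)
Your opening pointwise inequalities and your closing block-domination step (the $\lceil r/2\rceil$ versus $\lfloor r/2\rfloor$ comparison) are both correct and coincide with the paper's proof. The fatal problem is the middle of your argument --- the choice of offsets $j(i)$ so that the images $i+j(i)-1$ cover the tail of $|h|$ while the penalties $\phi(|x|_{[j(i)]})$ ``collapse to exactly one copy of the $x$-tail'' --- which is exactly the part you leave unverified. It cannot be carried out: already for the smallest target index $m=k+r+1$ the only admissible pair with source $i\ge r+1$ and penalty $j\ge k+1$ is $(i,j)=(r+1,k+1)$, and then the next target $m=k+r+2$ has no admissible pair left, so distinct sources with distinct tail penalties cannot cover the targets. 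More decisively, after your first paragraph you work exclusively with sorted quantities, and the lemma is simply not a consequence of the sorted-level relations you allow yourself. Take $\phi(t)=t$, $n=3$, $k=1$, $r=0$, and consider the nonincreasing sequences $a=(2,0,0)$, $b=(1,1,1)$, $c=(1,1,0)$ playing the roles of the sorted $|x|$, $|h|$, $|x^*|$. They satisfy every shifted triangle inequality you derive, namely $c_{i+j-1}\le a_i+b_j$, $b_{i+j-1}\le a_i+c_j$ and $a_{i+j-1}\le b_i+c_j$ for all admissible $i,j$, as well as the optimality relation $\sum_i c_i\le\sum_i a_i$; yet \eqref{phi-err} would read $b_2+b_3\le b_1+2(a_2+a_3)$, i.e.\ $2\le 1$, which is false. (No actual pair $x,h$ with $x^*=x-h$ realizes these sorted profiles --- that is precisely the point.) Hence no amount of bookkeeping with offsets, case analysis on $r$ versus $k$, or parity arguments can close your gap: the information needed for the lemma is destroyed the moment you pass to order statistics.

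The missing ingredient is index-level, and it is what the paper's proof keeps. Let $J$ be the set of indices of the $k$ largest entries of $|x|$. The sorted tail sum of $x^*$ can be written as a sum over actual indices,
\begin{equation*}
\sum_{i=r+1}^n\phi(|x^*|_{[i]})=\sum_{i\in J_1}\phi(|x_i-h_i|)+\sum_{i\in J_2}\phi(|x_i-h_i|),
\end{equation*}
for some $J_1\subseteq J$ and $J_2\subseteq J^{\rm c}$ with $|J_1|+|J_2|=n-r$, and one then applies your pointwise bounds in \emph{opposite} directions on the two sets: keep $\phi(|x_i|)-\phi(|h_i|)$ on $J_1$ and $\phi(|h_i|)-\phi(|x_i|)$ on $J_2$. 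The single factor $2$ in \eqref{phi-err} comes from the containment $J_2\subseteq J^{\rm c}$, which yields $\sum_{i\in J_2}\phi(|x_i|)\le\sum_{i=k+1}^n\phi(|x|_{[i]})$; this is the correlation (``the set on which the $h$-mass is collected is a set on which $x$ is small'') that has no counterpart in your sorted framework. Combining with $|J_1|\le k$, $|J_2|\ge n-k-r$, $|J_1\cup J_2|=n-r$ and the optimality inequality gives the intermediate bound $\sum_{i=k+r+1}^n\phi(|h|_{[i]})\le\sum_{i=1}^k\phi(|h|_{[i]})+2\sum_{i=k+1}^n\phi(|x|_{[i]})$, after which your final block-domination step finishes the proof exactly as in the paper.
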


\begin{proof}
 Since $x$ and $x^*$ are a feasible solution and an optimal solution of \eqref{pL1}, respectively, one has 
\beq \label{phi-opt}
\sum\limits_{i=r+1}^n \phi(|x|_{[i]}) \geq \sum\limits_{i=r+1}^n \phi(|x^*|_{[i]}).
\eeq
Let $J$ be the set of indices corresponding to the $k$ largest entries of $|x|$. Clearly $|J|=k$. 
By the definitions of $J$ and $h$, one can see that 
\beq \label{phixs}
 \sum\limits_{i=r+1}^n \phi(|x^*|_{[i]}) = \sum\limits_{i=r+1}^n \phi(|x-h|_{[i]}) = \sum\limits_{i \in J_1} \phi(|x_i - h_i|) + \sum\limits_{i \in J_2} \phi(|x_i - h_i|),
\eeq
for some $J_1 \subseteq J$, $J_2 \subseteq J^\c$ with $|J_1| + |J_2| = n-r$. Due to \eqref {phi1}, $\phi(0) = 0$ and the monotonicity of $\phi$, one has 
\[
\phi(|x_i - h_i|) \ \ge \ \phi\left(\big||x_i| - |h_i|\big|\right) \ \ge \  \max\left\{\phi(|x_i|) - \phi(|h_i|),  \phi(|h_i|)- \phi(|x_i|)\right\}\quad\quad \forall i.
\]
Using this and \eqref{phixs}, we have
\beqa
\sum\limits_{i=r+1}^n \phi(|x^*|_{[i]}) & \geq & \sum\limits_{i\in J_1}\phi(|x_i|) - \sum\limits_{i\in J_1}\phi(|h_i|) + \sum\limits_{i\in J_2}\phi(|h_i|) - \sum\limits_{i\in J_2}\phi(|x_i|) \nn \\
&  =& \sum\limits_{i\in J_1 \cup J_2}\phi(|x_i|) + \sum\limits_{i\in J_2}\phi(|h_i|) - \sum\limits_{i\in J_1}\phi(|h_i|) - 2 \sum\limits_{i\in J_2}\phi(|x_i|). \label{phi-lbd}
\eeqa
Since $J_1 \subseteq J$, $|J_1| + |J_2| = n-r$ and $|J|=k$, one has 
$|J_1| \le k$ and $|J_2| \ge n-k-r$. These together with the monotonicity of $\phi$ imply that 
\beq \label{phi-h}
\sum\limits_{i\in J_1}\phi(|h_i|)  \leq \sum^k\limits_{i=1} \phi(|h|_{[i]}), \quad\quad \quad
\sum\limits_{i\in J_2}\phi(|h_i|)  \geq  \sum^n\limits_{i=k+r+1} 
\phi(|h|_{[i]}).
\eeq 
In addition, by $|J_1| + |J_2| = n-r$, $J_2 \subseteq J^\c$, the definition of $J$, and the monotonicity of $\phi$, one has 
\[
\sum\limits_{i\in J_1 \cup J_2}\phi(|x_i|) \geq \sum\limits_{i=r+1}^n \phi(|x|_{[i]}), \quad \quad 
\sum\limits_{i\in J_2}\phi(|x_i|) \leq \sum\limits_{i\in J^\c}\phi(|x_i|) = \sum\limits_{i=k+1}^n \phi(|x|_{[i]}).
\]
These relations, \eqref{phi-lbd} and \eqref{phi-h} yield
\[
\sum\limits_{i=r+1}^n \phi(|x^*|_{[i]})  \ge \sum\limits_{i=r+1}^n \phi(|x|_{[i]}) +\sum^n\limits_{i=k+r+1} 
\phi(|h|_{[i]}) - \sum^k\limits_{i=1} \phi(|h|_{[i]}) - 2 \sum\limits_{i=k+1}^n \phi(|x|_{[i]}),
\]
which together with \eqref{phi-opt} implies 
\[
\sum^n_{i=k+r+1} \phi(|h|_{[i]})  \ \le \ \sum^k_{i=1} \phi(|h|_{[i]}) + 2 \sum^n_{i=k+1} \phi(|x|_{[i]}).
\]
Using this relation and the fact $\sum\limits_{i=k+\left\lceil r/2 \right\rceil+1}^{k+r} \phi(|h|_{[i]})  \le  \sum\limits_{i=k+1}^{k+\left\lceil r/2 \right\rceil} \phi(|h|_{[i]})$, we obtain that 
\begin{align*}
& \sum^n_{i=k+\left\lceil r/2 \right\rceil+1} \phi(|h|_{[i]}) - \sum^{k+\left\lceil r/2 \right\rceil}_{i=1} \phi(|h|_{[i]}) \\
& =  \sum^n_{i=k+r+1} \phi(|h|_{[i]}) + \sum\limits_{i=k+\left\lceil r/2 \right\rceil+1}^{k+r} \phi(|h|_{[i]}) - \sum\limits_{i=1}^k \phi(|h|_{[i]}) -\sum\limits_{i=k+1}^{k+\left\lceil r/2 \right\rceil} \phi(|h|_{[i]}) \\
& \le \sum^n_{i=k+r+1} \phi(|h|_{[i]}) - \sum\limits_{i=1}^k \phi(|h|_{[i]}) \ \le \  2 \sum^n_{i=k+1} \phi(|x|_{[i]}). 
\end{align*}
This implies that \eqref{phi-err} holds as desired.
\end{proof}

\gap

We are now ready to establish error bounds on the sparse recovery by \eqref{pL1} with $\phi(\cdot) = |\cdot|$ 
under some RIP conditions. It shall be mentioned that these results can be generalized to the case where $\phi(t)=|t|^q$ with $q\in (0,1)$ by 
using \eqref{phi-err} and the similar techniques used in the proof of  \cite[Theorem 3.3]{Cai2013} and \cite[Theorem 2]{Song2014}. 
 
\begin{theorem} \label{mth1}
Let $k\ge 2$ and $r\ge 0$ be given such that $k+r \le n-1$. Suppose that $x^*$ is an optimal solution of problem \eqref{pL1} with such $r$, $\phi(t)=|t|$ and some $\sigma \ge 0$, and $\tx^* \in \Re^n$ satisfies $\|A \tx^*-b\|\leq \sigma$. The following statements hold:
\bi
\item[(i)]
 If $\delta=\delta^A_{k+\left\lceil r/2 \right\rceil} < \frac{1}{3}$, then 
\[ 
\|\tx^*-x^*\|\ \leq \ \frac{2\sqrt{2(1+\delta)}\sigma}{1-3\delta} + \frac{2\sqrt{2}(2\delta+\sqrt{(1-3\delta)\delta})}{1-3\delta} \frac{\|\tx^*_{-\max(k)}\|_1}{\sqrt{k+\left\lceil r/2 \right\rceil}};
\]
\item[(ii)]
If $\delta=\delta^A_{\gamma (k+\left\lceil r/2 \right\rceil)} < \sqrt{(\gamma-1)/\gamma}$ for some $\gamma>1$, then 
\[
\|\tx^*-x^*\|\ \leq \ \frac{2\sqrt{2(1+\delta)}\sigma}{1-\sqrt{\gamma/(\gamma-1)}\delta} + 
\left(\frac{\sqrt2\delta+\sqrt{\gamma(\sqrt{(\gamma-1)/\gamma}-\delta)\delta}}{\gamma(\sqrt{(\gamma-1)/\gamma}-\delta)}+1\right) \frac{2\|\tx^*_{-\max(k)}\|_1}{\sqrt{k+\left\lceil r/2 \right\rceil}}.
\] 
\ei
\end{theorem}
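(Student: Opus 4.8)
The plan is to reduce the theorem to the standard restricted-isometry machinery for $\ell_1$ recovery developed by Cai and Zhang, using Lemma~\ref{lem:errbd} to supply the cone condition that their arguments require. Write $h = \tx^* - x^*$ and set $s := k + \lceil r/2 \rceil$. First I would record the two ingredients that feed the RIP analysis. Applying Lemma~\ref{lem:errbd} with $x = \tx^*$ and $\phi(t)=|t|$ (legitimate since $\tx^*$ is feasible and $x^*$ is optimal for \eqref{pL1}), and recognizing the three sums as $\ell_1$ norms of the tail and head of $h$, one obtains the cone condition
\[
\|h_{-\max(s)}\|_1 \ \le \ \|h_{\max(s)}\|_1 + 2\|\tx^*_{-\max(k)}\|_1 .
\]
Second, since both $x^*$ and $\tx^*$ satisfy $\|A\cdot-b\|\le\sigma$, the triangle inequality yields $\|Ah\| \le \|A\tx^*-b\| + \|Ax^*-b\| \le 2\sigma$. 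These two facts are exactly the hypotheses of the Cai--Zhang error analysis, and everything downstream is model-independent.

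For part (i) I would follow the same-order argument (as in \cite[Theorem~3.1]{Cai2013}). Decompose $h = h_{\max(s)} + h_{-\max(s)}$, where $h_{\max(s)}$ is $s$-sparse so that the RIC $\delta = \delta^A_{s}$ governs it. The tail obeys $\|h_{-\max(s)}\|_\infty \le |h|_{[s+1]} \le \tfrac1s\|h_{\max(s)}\|_1$ together with the cone bound on $\|h_{-\max(s)}\|_1$; by the sparse-representation-of-a-polytope lemma of Cai--Zhang, $h_{-\max(s)}$ can then be written as a convex combination of $s$-sparse vectors whose $\ell_2$ norms are controlled by $\tfrac1{\sqrt s}\big(\|h_{\max(s)}\|_1 + 2\|\tx^*_{-\max(k)}\|_1\big)$. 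Expanding $\|Ah_{\max(s)}\|^2 = \langle Ah_{\max(s)}, Ah\rangle - \langle Ah_{\max(s)}, Ah_{-\max(s)}\rangle$, I would bound the first inner product by $\sqrt{1+\delta}\,\|h_{\max(s)}\|\,(2\sigma)$ and the second, via the polytope decomposition and the RIP inner-product inequality, by a multiple of $\delta\,\|h_{\max(s)}\|\cdot\tfrac1{\sqrt s}(\cdots)$. This produces a quadratic inequality in $\|h_{\max(s)}\|$ whose quadratic coefficient is proportional to $1-3\delta$; the hypothesis $\delta<1/3$ keeps it positive, and solving for $\|h_{\max(s)}\|$, then restoring the tail through $\|h_{-\max(s)}\| \le \tfrac1{\sqrt s}\|h_{-\max(s)}\|_1$ and the cone condition, yields the stated bound after collecting constants.

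For part (ii) I would instead invoke the higher-order variant (as in \cite[Theorem~3.3]{Cai2013} and \cite[Theorem~2]{Song2014}), in which the tail is split into blocks of size $(\gamma-1)s$ and the relevant RIC is $\delta = \delta^A_{\gamma s}$. The same expansion of $\|Ah_{\max(s)}\|^2$ is carried out, but the cross term is now estimated using RIP of order $\gamma s$, so the critical coefficient becomes proportional to $\sqrt{(\gamma-1)/\gamma}-\delta$; the hypothesis $\delta<\sqrt{(\gamma-1)/\gamma}$ keeps it positive, and optimizing over the block decomposition reproduces the second error bound.

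The conceptually routine part is the cone condition and the estimate $\|Ah\|\le 2\sigma$, which Lemma~\ref{lem:errbd} delivers almost immediately. The main obstacle is bookkeeping: pushing the polytope decomposition and the quadratic inequality through while tracking every constant so as to land precisely on the coefficients $\tfrac{2\sqrt{2(1+\delta)}}{1-3\delta}$ and $\tfrac{2\sqrt2(2\delta+\sqrt{(1-3\delta)\delta})}{1-3\delta}$ in (i), and their $\gamma$-dependent analogues in (ii). Because only the cone condition and $\|Ah\|\le2\sigma$ are specific to the partial-regularization model, the remainder is a verbatim adaptation of the Cai--Zhang and Song--Xia constant-chasing, which is also why the argument extends to $\phi(t)=|t|^q$ with $q\in(0,1)$ as noted.
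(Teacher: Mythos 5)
Your proposal is correct and takes essentially the same approach as the paper: the paper's proof likewise sets $h=\tx^*-x^*$, applies Lemma~\ref{lem:errbd} with $x=\tx^*$ and $\phi(t)=|t|$ to obtain the cone condition $\|h_{-\max(k+\left\lceil r/2 \right\rceil)}\|_1 \le \|h_{\max(k+\left\lceil r/2 \right\rceil)}\|_1 + 2\|\tx^*_{-\max(k)}\|_1$, and then concludes by invoking the Cai--Zhang stable recovery theorems, whose arguments require only this cone condition together with the feasibility bound $\|Ah\|\le 2\sigma$. The only difference is presentational: the paper cites those theorems as black boxes, whereas you sketch reproducing their internal polytope-decomposition and constant-chasing steps.
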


\begin{proof}
Let $h = \tx^*-x^*$. In view of \eqref{phi-err} with $\phi(t)=|t|$ 
and $x=\tx^*$, we obtain that 
\[
\|h_{-\max(k+\left\lceil r/2 \right\rceil)}\|_1 \le \|h_{\max(k+\left\lceil r/2 \right\rceil)}\|_1 + 2 \|\tx^*_{-\max(k)}\|_1.
\]
The conclusion of this theorem then follows from this relation and \cite[Theorem 3.3]{Cai2013} and \cite[Theorem 2.1]{Cai2014}.
\end{proof}

\gap

{\bf Remark:} If $\tx^*$ is a $k$-sparse vector satisfying $\|Ax^*-b\| \le \sigma$, the second term in the above error bounds vanishes due to $\tx^*_{-\max(k)}=0$.

\section{Numerical algorithms}
\label{alg}

In this section we study numerical algorithms for solving models  \eqref{pL1} and \eqref{p2}. In particular, we propose a feasible augmented Lagrangian (FAL) method for solving them, which solves a sequence of partially regularized unconstrained optimization problems in the form of 
\beq \label{pruo}
\min_{x \in \Re^n} \left \{F(x) := f(x) + \lambda \sum\limits_{i=r+1}^n \phi(|x|_{[i]}) \right \}.
\eeq

\subsection{Partially regularized unconstrained optimization}
\label{npg-method}

In this subsection we propose a nonmonotone proximal gradient (NPG) method for solving problem \eqref{pruo}, which arises as a subproblem in our subsequent FAL method, and establish its convergence. We also show that  its associated proximal subproblems generally can be solved efficiently. Before proceeding, we make the following assumption on problem \eqref{pruo} throughout this subsection.

\begin{assumption} \label{assump-pruo}
\bi
\item[(i)] $f$ is continuously differentiable in $\cU(x^0;\Delta)$ for some
$x^0\in\Re^n$ and $\Delta>0$, and moreover, there exists some $L_f>0$ such that
\[
\|\nabla f(x)-\nabla f(y)\|\le L_f \|x-y\|, \quad\quad \forall x,y \in \cU(x^0;\Delta),
\]
where
\beqas
 \cU(x^0,\Delta)  &:= & \left\{x: \|x-z\|\le \Delta \ \mbox{for some} \ z \in \cS(x^0) \right\}, \\ [6pt]
 \cS(x^0)  &:=&  \left\{x\in\Re^n: \ F(x) \le F(x^0)\right\}.
\eeqas
\item[(ii)] $F$ is bounded below and uniformly continuous in $\cS(x^0)$.
\item[(iii)] The quantities $A$ and $B$ defined below are finite:
\[
A := \sup\limits_{x \in \cS(x^0)} \|\nabla f(x)\|,  \quad  B := \sup\limits_{x \in \cS(x^0)} \sum\limits_{i=r+1}^n \phi(|x|_{[i]}).
\]
\ei
\end{assumption}

Following a general framework proposed by Wright et al.\ \cite{wn2009}, we now present an NPG method for \eqref{pruo} as follows. The detailed explanation of this method can be found in \cite{wn2009}.

\gap

\noindent
{\bf Algorithm 1: A nonmonotone proximal gradient (NPG) method for \eqref{pruo}} \\ [5pt]
Choose  $0< L_{\min} < L_{\max}$, $\tau>1$, $c>0$ and integer $N \ge 0$ arbitrarily. 
Set $k = 0$.
\begin{itemize}
\item[1)] Choose $L^0_k \in [L_{\min}, L_{\max}]$ arbitrarily. Set $L_k = L^0_k$. 
\bi
\item[1a)] Solve the proximal subproblem
\beq \label{subprob}
    x^{k+1} \in \underset{x \in \Re^n}{\Argmin}\left \{f(x^k)+\nabla f(x^k)^T(x-x^k)+\frac{L_k}{2}\|x-x^k\|^2+\lambda \sum^n_{i=r+1}\phi(|x|_{[i]}) \right \}.
\eeq
\item[1b)] If
\begin{eqnarray}
F(x^{k+1})  \leq \max\limits_{[k-N]^+\leq i \leq k} F(x^i)  - \frac{c}{2}\|x^{k+1}-x^k\|_2^2 \label{descent}
\end{eqnarray}
is satisfied, then go to Step 2).
\item[1c)] Set $L_k\leftarrow\tau L_k$ and go to (\textbf{1a}). 
\ei
\item[2)] Set $\bL_k = L_k$, $k \leftarrow k+1$ and go to Step 1).
\end{itemize}
\noindent
{\bf end}

%
%
%
%
%
%
\vgap

{\bf Remark:} 
\begin{itemize}
\item[(i)]
When $N=0$, $\{F(x^k)\}$ is decreasing. Otherwise, this sequence  may increase at some  
iterations. Thus the above method is generally a nonmonotone method when $N >0$.
\item[(ii)] A practical choice of $L^0_k$ is by the following formula proposed by Barzilai and Borwein \cite{BB}
(see also \cite{E.G.Birgin}):
\beq \label{Lk0}
L^0_k  = \max \left\{ L_{\min } ,\min \left\{L_{\max } ,\frac{\langle s^k, y^k \rangle}
{\|s^k\|^2}\right\} \right\},
\eeq
where $s^k  = x^k  - x^{k - 1}$, $y^k=\nabla f(x^k)-\nabla f(x^{k - 1})$.
\item[(iii)] The main computational burden of Algorithm 1 lies in solving the proximal subproblem \eqref{subprob}. As shown at the end of this subsection,  \eqref{subprob} can be solved as $n-r$ number of one-dimensional problems in the form of 
\beq \label{vdef}
\nu (t) = \min_{u \in \Re} \left \{ \frac{1}{2} (u-t)^2 + \phi(|u|) \right \}
\eeq
for some $t \in \Re$. Clearly the latter problem can be efficiently solved for some commonly used $\phi$ such as $\ell_0$, $\ell_1$,  $\ell_q$,   Capped-$\ell_1$, Log,  MCP and SCAD.
\end{itemize}

\vgap

The following theorem states the number of inner iterations is uniformly 
bounded at each outer iteration of Algorithm 1 and thus this algorithm is 
well defined.  Its proof follows from that of \cite[Proposition A.1]{clp2014} with 
$C= \sup\limits_{x \in \cS(x^0)} \sum\limits_{i=r+1}^n \phi(|x|_{[i]}) \ge 0$. 

\begin{theorem} \label{le32}
Let $\{x^k\}$ and $\bar L_k$ be generated in Algorithm 1, and let
\beq \nn
\bar L := \max\{L_{\max},\tau\underline L, \tau(L_f +c)\}, \ \quad\quad \ \underline L :=  \frac{2(A\Delta+B)}{\Delta^2},
\eeq
where $A$, $B$, $L_f$ and $\Delta$ are given in Assumption \ref{assump-pruo}. 
Under Assumption \ref{assump-pruo}, the following statements hold:
\begin{itemize}
\item[(i)] For each $k \ge 0$, the inner termination criterion \eqref{descent} is satisfied after
at most
\[
\left\lfloor \frac{\log \bar L-\log L_{\min}}{\log \gamma} +1\right\rfloor
\]
inner iterations;
\item[(ii)]  $F(x^k) \le F(x^0)$ and $\bar L_k \le \bar L$ for all $k \ge 0$.
\end{itemize}
\end{theorem}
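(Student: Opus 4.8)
The plan is to argue by induction on the outer index $k$, maintaining the invariant $x^k\in\cS(x^0)$ (equivalently $F(x^k)\le F(x^0)$), and, within each outer iteration, to show that the backtracking loop on $L_k$ must accept once $L_k$ exceeds an explicit threshold depending only on the data in Assumption~\ref{assump-pruo}. Writing $\Psi(x)=\sum_{i=r+1}^n\phi(|x|_{[i]})$ and granting the invariant $x^k\in\cS(x^0)$, the bounds $\|\nabla f(x^k)\|\le A$ and $\Psi(x^k)\le B$ of Assumption~\ref{assump-pruo}(iii) are available and drive every estimate below.

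Two inequalities do the work. First, since $x^{k+1}$ minimizes the proximal model in~\eqref{subprob}, comparing its value at $x^{k+1}$ with its value at $x^k$ gives
\[
\frac{L_k}{2}\|x^{k+1}-x^k\|^2 \ \le\ -\nabla f(x^k)^T(x^{k+1}-x^k)+\lambda\bigl(\Psi(x^k)-\Psi(x^{k+1})\bigr)\ \le\ A\|x^{k+1}-x^k\|+\lambda B,
\]
using Cauchy--Schwarz together with $0\le\Psi(x^{k+1})$ and $\Psi(x^k)\le B$. Reading the right-hand side as a quadratic in $d:=\|x^{k+1}-x^k\|$, the relation $\tfrac{L_k}{2}d^2-Ad-\lambda B\le 0$ bounds $d$ by the positive root of the corresponding equation; since the definition $\underline L=2(A\Delta+B)/\Delta^2$ makes $d=\Delta$ exactly that root when $L_k=\underline L$, any $L_k\ge\underline L$ forces $\|x^{k+1}-x^k\|\le\Delta$, so the trial step stays in $\cU(x^0;\Delta)$, where $\nabla f$ is $L_f$-Lipschitz.

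Second, on $\cU(x^0;\Delta)$ the descent lemma gives $f(x^{k+1})\le f(x^k)+\nabla f(x^k)^T(x^{k+1}-x^k)+\tfrac{L_f}{2}\|x^{k+1}-x^k\|^2$; adding $\lambda\Psi(x^{k+1})$ and substituting the proximal optimality inequality yields
\[
F(x^{k+1})\ \le\ F(x^k)-\frac{L_k-L_f}{2}\|x^{k+1}-x^k\|^2.
\]
Hence, as soon as $L_k\ge\max\{\underline L,\,L_f+c\}$, the acceptance test~\eqref{descent} holds, since $F(x^k)\le\max_{[k-N]^+\le i\le k}F(x^i)$ by the invariant. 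Because the loop multiplies $L_k$ by $\tau$ starting from $L_k^0\ge L_{\min}$, it crosses this threshold after at most $\lfloor(\log\bar L-\log L_{\min})/\log\tau+1\rfloor$ trials, giving (i); and the accepted $\bar L_k$ is either the initial value $\le L_{\max}$, or at most $\tau$ times a value still below $\max\{\underline L,L_f+c\}$, so in all cases $\bar L_k\le\bar L$, which is the second assertion of (ii).

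Finally, to close the induction and obtain $F(x^k)\le F(x^0)$, I would track the sliding maximum $M_k:=\max_{[k-N]^+\le i\le k}F(x^i)$: the acceptance test gives $F(x^{k+1})\le M_k$, and since the index window defining $M_{k+1}$ differs from that of $M_k$ only by dropping an earlier index and adjoining $k+1$, while $F(x^{k+1})\le M_k$, we get $M_{k+1}\le M_k$, whence $M_k\le M_0=F(x^0)$ and $x^{k+1}\in\cS(x^0)$. The main obstacle is precisely the coupling between feasibility of the iterate and applicability of the descent lemma: the Lipschitz estimate holds only on $\cU(x^0;\Delta)$, so one must first certify, via the proximal bound and the choice of $\underline L$, that a sufficiently large $L_k$ keeps the step inside this neighborhood; introducing $\underline L$ is what decouples the two requirements and lets the induction proceed.
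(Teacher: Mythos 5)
Your proof is correct and takes essentially the same route as the paper's own argument: the paper simply defers to the proof of Proposition~A.1 in \cite{clp2014}, which is exactly your combination of the proximal-optimality comparison at $x^k$ (forcing $\|x^{k+1}-x^k\|\le\Delta$ once $L_k\ge\underline L$), the descent lemma on $\cU(x^0;\Delta)$, the threshold $\max\{\underline L, L_f+c\}$ for acceptance, and the sliding-maximum induction giving $F(x^k)\le F(x^0)$. The one blemish is a bookkeeping discrepancy you inherit from the paper's statement itself: your step-length inequality produces $\lambda B$ rather than $B$, so the claim that $d=\Delta$ is exactly the positive root at $L_k=\underline L=2(A\Delta+B)/\Delta^2$ holds only when $\lambda\le 1$ (or when $\lambda$ is absorbed into $B$), a mismatch already present in the paper's definition of $\underline L$.
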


Before studying the global convergence of the NPG method for solving problem \eqref{pruo}, we introduce two definitions as follows, which can be found in \cite{Rock98}. In addition, for convenience of presentation, let $\Phi(x) := \sum^n_{i=r+1}\phi(|x|_{[i]})$.

\begin{definition}[first-order stationary point]
$x^*\in\Re^n$ is a first-order stationary point of \eqref{pruo} if
\begin{equation}\nn
  0\in \nabla f(x^*)+\lambda\ \partial\Phi(x^*).
\end{equation}
\end{definition}

\begin{definition}[first-order $\eps$-stationary point] \label{eps-stat}
Given any $\eps \ge 0$, $x^*\in\Re^n$ is a first-order $\eps$-stationary point of \eqref{pruo} if
\[
\dist(0,\nabla f(x^*)+\lambda\ \partial\Phi(x^*)) \le \eps.
\]
\end{definition}

We are now ready to establish the global convergence of the NPG method. In particular, we show that any accumulation point of $\{x^k\}$ is a first-order stationary point of problem \eqref{pruo}.

\begin{theorem} \label{npg-converge}
Let the sequence $\{x^k\}$ be generated by Algorithm 1. There holds:
\bi
\item[(i)] $\|x^{k+1}-x^k\| \to 0$ as $k \to \infty$;
\item[(ii)] Any accumulation point of $\{x^k\}$ is a first-order stationary point of \eqref{pruo};
\item[(iii)] For any $\eps>0$, $x^k$ is a first-order $\eps$-stationary point of problem \eqref{pruo} when $k$ is sufficiently large. 
\ei
\end{theorem}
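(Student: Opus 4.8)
The plan is to handle the three parts in order, extracting from the proximal subproblem \eqref{subprob} a subdifferential inclusion that, combined with the step-length decay established in (i), yields both the stationarity of accumulation points in (ii) and the $\eps$-stationarity in (iii).

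For part (i) I would run the standard nonmonotone (Grippo--Lampariello--Lucidi type) argument. Define $V_k := \max_{[k-N]^+\le i\le k} F(x^i)$. Since any index $i\le k$ appearing in the window defining $V_{k+1}$ satisfies $i\ge k-N$ and hence lies in the window defining $V_k$, we have $F(x^i)\le V_k$; together with $F(x^{k+1})\le V_k$ from \eqref{descent} this gives $V_{k+1}\le V_k$. As $F$ is bounded below by Assumption~\ref{assump-pruo}(ii), $\{V_k\}$ is nonincreasing and converges to some $V^*$. Writing $\ell(k)$ for an index attaining the maximum defining $V_k$, note $k-\ell(k)\le N$, so $\ell(k)\to\infty$. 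Applying \eqref{descent} at step $\ell(k)-1$ gives $\frac{c}{2}\|x^{\ell(k)}-x^{\ell(k)-1}\|^2\le V_{\ell(k)-1}-F(x^{\ell(k)})=V_{\ell(k)-1}-V_k\to 0$, so $\|x^{\ell(k)}-x^{\ell(k)-1}\|\to 0$. Using the uniform continuity of $F$ on $\cS(x^0)$ together with $F(x^{\ell(k)})=V_k\to V^*$, one propagates this by induction over the bounded window: $F(x^{\ell(k)-j})\to V^*$ and $\|x^{\ell(k)-j}-x^{\ell(k)-j-1}\|\to 0$ for each fixed $j$, whence $F(x^k)\to V^*$ for the whole sequence and finally $\|x^{k+1}-x^k\|\to 0$. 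Here Theorem~\ref{le32}(ii), which keeps all iterates in $\cS(x^0)$ and bounds $\bar L_k\le\bar L$, is what makes the uniform-continuity and Lipschitz estimates applicable.

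For part (ii), the Fermat rule for the minimizer $x^{k+1}$ of \eqref{subprob}, using the limiting subdifferential of $\Phi$ and the sum rule for a smooth-plus-lower-semicontinuous objective, reads $0\in\nabla f(x^k)+L_k(x^{k+1}-x^k)+\lambda\,\partial\Phi(x^{k+1})$; equivalently $\xi^k:=-\nabla f(x^k)-L_k(x^{k+1}-x^k)\in\lambda\,\partial\Phi(x^{k+1})$. Fix an accumulation point $x^*$ with $x^{k_j}\to x^*$; by (i) also $x^{k_j+1}\to x^*$. To invoke the closed-graph property of $\partial\Phi$ I must secure $\Phi(x^{k_j+1})\to\Phi(x^*)$. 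Lower semicontinuity gives $\Phi(x^*)\le\liminf_j\Phi(x^{k_j+1})$; for the reverse I compare $x^{k+1}$ against the point $x^*$ inside \eqref{subprob}, obtaining $\tfrac{L_{k_j}}{2}\|x^{k_j+1}-x^{k_j}\|^2+\nabla f(x^{k_j})^T(x^{k_j+1}-x^{k_j})+\lambda\Phi(x^{k_j+1})\le\nabla f(x^{k_j})^T(x^*-x^{k_j})+\tfrac{L_{k_j}}{2}\|x^*-x^{k_j}\|^2+\lambda\Phi(x^*)$, and letting $j\to\infty$ with $\|x^{k_j+1}-x^{k_j}\|\to0$ and $x^{k_j}\to x^*$ yields $\limsup_j\Phi(x^{k_j+1})\le\Phi(x^*)$. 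Hence $\Phi(x^{k_j+1})\to\Phi(x^*)$. Since $\nabla f$ is continuous, $L_{k_j}$ is bounded, and $\|x^{k_j+1}-x^{k_j}\|\to0$, we have $\xi^{k_j}\to-\nabla f(x^*)$, and closedness of the graph of $\partial\Phi$ (under the $\Phi$-value convergence just obtained) gives $-\nabla f(x^*)\in\lambda\,\partial\Phi(x^*)$, i.e. $0\in\nabla f(x^*)+\lambda\,\partial\Phi(x^*)$.

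Part (iii) is then immediate from the same inclusion: adding $\nabla f(x^{k+1})$ to $\xi^k\in\lambda\,\partial\Phi(x^{k+1})$ shows $\nabla f(x^{k+1})-\nabla f(x^k)-L_k(x^{k+1}-x^k)\in\nabla f(x^{k+1})+\lambda\,\partial\Phi(x^{k+1})$, so that $\dist\!\big(0,\nabla f(x^{k+1})+\lambda\,\partial\Phi(x^{k+1})\big)\le(L_f+\bar L)\|x^{k+1}-x^k\|$ using the Lipschitz bound on $\nabla f$ and $L_k\le\bar L$ from Theorem~\ref{le32}. By part (i) the right-hand side tends to $0$, so after reindexing $x^k$ is a first-order $\eps$-stationary point for all large $k$. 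I expect the main obstacle to be part (i): the nonmonotone bookkeeping over the sliding maximum window, and in particular making rigorous the induction that transfers the vanishing of $\|x^{\ell(k)}-x^{\ell(k)-1}\|$ to the full sequence via uniform continuity. The one genuinely non-routine idea in (ii) is the prox comparison against $x^*$ forcing $\Phi(x^{k_j+1})\to\Phi(x^*)$, which is exactly what licenses the closed-graph passage to the limit.
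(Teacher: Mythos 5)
Your proposal is correct and follows the same overall route as the paper: for part (i) the paper simply cites \cite[Lemma 4]{wn2009}, and your sliding-window (GLL-type) argument is precisely the content of that lemma; for parts (ii) and (iii) both you and the paper extract the inclusion $0\in\nabla f(x^k)+\bar L_k(x^{k+1}-x^k)+\lambda\,\partial\Phi(x^{k+1})$ from the optimality condition of \eqref{subprob}, then pass to the limit along a subsequence for (ii) and bound the stationarity residual by a multiple of $\|x^{k+1}-x^k\|$ for (iii). The one substantive difference is in (ii): the paper invokes the outer semicontinuity of $\partial\Phi$ directly along $x^{k_j+1}\to x^*$, whereas you first establish the $\Phi$-attentive convergence $\Phi(x^{k_j+1})\to\Phi(x^*)$ by comparing $x^{k_j+1}$ against $x^*$ inside the proximal subproblem. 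This extra step is not cosmetic: under Assumption \ref{assump-phi} the function $\phi$ is only lower semicontinuous and increasing (it may have upward jumps), so $\Phi$ need not be continuous, and the graph-closedness of the limiting subdifferential of \cite[Definition~8.3]{Rock98} is guaranteed only along sequences on which the function values also converge. Your prox-comparison argument supplies exactly the justification that the paper's proof leaves implicit, so your version of (ii) is the more complete one. A minor difference in (iii): you bound $\|\nabla f(x^{k+1})-\nabla f(x^k)\|$ by $L_f\|x^{k+1}-x^k\|$ via the Lipschitz property on $\cU(x^0;\Delta)$, while the paper appeals to uniform continuity of $\nabla f$ on $\cS(x^0)$; both are legitimate given Assumption \ref{assump-pruo} and the fact, from Theorem \ref{le32}(ii), that all iterates stay in $\cS(x^0)$.
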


\begin{proof}
(i) The proof of statement (i) is similar to that of \cite[Lemma 4]{wn2009}.

(ii)  It follows from Theorem \ref{le32} that $\{\bL_k\}$ is bounded. By the first-order optimality condition of \eqref{subprob}, we have
\begin{equation}\label{beforelim}
  0\in \nabla f(x^k)+ \bar{L}_k(x^{k+1}-x^k)+\lambda~\partial\Phi(x^{k+1}),
\end{equation}
where $\Phi(x) = \sum^n_{i=r+1}\phi(|x|_{[i]})$. Let $x^*$ be an accumulation point of 
$\{x^k\}$. Then there exists a subsequence $\mathcal{K}$ such that 
$\{x^k\}_{\mathcal{K}}\to x^*$, which together with $\|x^{k+1}-x^k\|\to 0$ implies that 
$\{x^{k+1}\}_{\mathcal{K}}\rightarrow x^*$. Using this, the boundedness of $\{\bar{L}_k\}$, 
the continuity of $\nabla f$ and the outer semi-continuity of $\partial\Phi(\cdot)$, and 
taking limits on both sides of (\ref{beforelim}) as $k\in\mathcal{K}\rightarrow\infty$, we obtain 
\begin{equation*}
  0\in \nabla f(x^*)+\lambda~\partial\Phi(x^*).
\end{equation*}
Hence, $x^*$ is a first-order stationary point of \eqref{pruo}.

(iii) It follows from \eqref{beforelim} with $k$ replaced by $k-1$ that 
\[
\nabla f(x^k)-\nabla f(x^{k-1}) + \bL_{k-1}(x^{k-1}-x^k) \in \nabla f(x^k)+\lambda~\partial\Phi(x^{k}),
\]
which yields 
\beq \label{eps-subdiff1}
\dist(0, \nabla f(x^k)+\lambda~\partial\Phi(x^{k})) \le \|\nabla f(x^{k-1})-\nabla f(x^k)+\bL_k(x^k-x^{k-1})\| .
\eeq 
In addition, one can observe from Theorem \ref{le32} that $\{x^k\} \subseteq \cS(x^0) = \{x: F(x) \le F(x^0)\}$. By Assumption \ref{assump-pruo}, 
we know that $\nabla f$ is uniformly continuous in $\cS(x^0)$. It follows from these and 
Theorem \ref{npg-converge} (i) that 
\beq \label{dist-x}
\|\nabla f(x^{k-1})-\nabla f(x^k)+\bL_k(x^k-x^{k-1})\| \le \eps
\eeq
holds when $k$ is sufficiently large. By \eqref{eps-subdiff1} and \eqref{dist-x}, we see that  for sufficiently large $k$, there holds
\beq \label{eps-subdiff2}
\dist(0, \nabla f(x^k)+\lambda~\partial\Phi(x^{k})) \le \eps,
\eeq 
and hence $x^k$ is a first-order $\eps$-stationary point of problem \eqref{pruo}.
\end{proof}

\vgap

{\bf Remark:} 
Recall that $\Phi(x) = \sum^n_{i=r+1}\phi(|x|_{[i]})$. Due to the complication of this function, it is generally hard to evaluate $\partial \Phi$ and hence verify \eqref{eps-subdiff2} directly. Nevertheless, one 
can see from the above proof that \eqref{dist-x} implies \eqref{eps-subdiff2}. Therefore, \eqref{dist-x} can be used as a verifiable termination criterion for Algorithm 1 to generate a first-order $\eps$-stationary point of \eqref{pruo}.  

\vgap

In the remainder of this subsection, we discuss how to efficiently solve the proximal subproblem \eqref{subprob} of Algorithm 1. In particular, we show that \eqref{subprob} can be solved as $n-r$ number of one-dimensional problems in the form of \eqref{vdef}. To proceed,  we first study some properties of the function $\nu (t)$ that is defined in \eqref{vdef}. The following lemma shows that $\nu$ is locally Lipschitz everywhere, and it also characterizes the Clarke subdifferential of $\nu$. 
 
\begin{lemma} \label{lem10}
Let $\nu$ be defined in \eqref{vdef}. Suppose that $\phi$ satisfies Assumption \ref{assump-phi}. Then  the following properties hold:
\begin{itemize}
\item [(i)] $\nu$ is well-defined in $\Re$;
\item [(ii)] $\nu$ is locally Lipschitz in $\Re$;
\item [(iii)] $\partial^\c \nu(t) = \co(\{t-u: u \in \M(t)\})$, where $\partial^\c \nu(t)$ denotes the Clarke subdifferential of $\nu$ at $t$, and $\M(t)$ is the set of the optimal solutions of \eqref{vdef} at $t$, that is, 
\beq \label{Mt}
\M (t) = \left\{ u \in \Re : \frac{1}{2} (u-t)^2 + \phi(|u|) = \nu (t) \right\}. 
\eeq
\end{itemize}
\end{lemma}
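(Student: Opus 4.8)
The plan is to prove the three properties in order, exploiting the structure of the one-dimensional problem \eqref{vdef}. For part (i), I would show that for each fixed $t\in\Re$, the infimum defining $\nu(t)$ is actually attained, so that $\nu$ is finite-valued. Observe that $\frac{1}{2}(u-t)^2+\phi(|u|)$ is bounded below by $0$ (since $\phi\ge 0$ under Assumption \ref{assump-phi}, which gives $\phi(0)=0$ and $\phi$ increasing on $[0,\infty)$), and as $|u|\to\infty$ the quadratic term forces the objective to $+\infty$. The objective is lower semi-continuous in $u$ because $\phi$ is lower semi-continuous and $|\cdot|$ is continuous. A lower semi-continuous function that is coercive on $\Re$ attains its minimum on any closed bounded set containing a sublevel set, so the minimum is attained and $\nu(t)$ is a well-defined real number; equivalently this is a special case of Theorem \ref{th1} in one dimension.

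For part (ii), the key observation is that $\nu$ is an infimal expression of the form $\nu(t)=\min_{u}\{\frac{1}{2}(u-t)^2+\phi(|u|)\}$, i.e.\ a pointwise infimum over $u$ of functions each of which is smooth and $1$-Lipschitz-gradient in $t$. I would first establish that $\nu$ is bounded on bounded sets and then prove local Lipschitz continuity directly from the definition. Fix any bounded interval; for $t_1,t_2$ in it, let $u_2\in\M(t_2)$ be a minimizer at $t_2$. Then $\nu(t_1)\le \frac12(u_2-t_1)^2+\phi(|u_2|)$, and subtracting $\nu(t_2)=\frac12(u_2-t_2)^2+\phi(|u_2|)$ gives $\nu(t_1)-\nu(t_2)\le \frac12[(u_2-t_1)^2-(u_2-t_2)^2]$. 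By symmetry one obtains a two-sided bound, and since the minimizers $u$ stay in a bounded set when $t$ ranges over a bounded set (again by coercivity), the difference of squares is controlled by $C|t_1-t_2|$ for a constant $C$ depending only on the interval. This yields local Lipschitz continuity.

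For part (iii), the identity $\partial^\c\nu(t)=\co(\{t-u:u\in\M(t)\})$ is a Danskin-type / marginal-function formula. Since $\nu$ is locally Lipschitz (part (ii)), its Clarke subdifferential is well-defined. I would invoke the standard result on subdifferentiation of a min-function: for $\theta(t,u)=\frac12(u-t)^2+\phi(|u|)$, the partial derivative in $t$ is $\partial_t\theta(t,u)=t-u$, which is continuous in $(t,u)$, and the set of minimizers $\M(t)$ is the argmin set. The Clarke subdifferential of the marginal function $\nu(t)=\min_u\theta(t,u)$ is then the convex hull of $\{\partial_t\theta(t,u):u\in\M(t)\}=\{t-u:u\in\M(t)\}$. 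To make this rigorous I would verify the hypotheses of the relevant marginal-function theorem (e.g.\ from Clarke's or Rockafellar--Wets' calculus): local uniform boundedness of the minimizer set and upper semi-continuity of $\M(\cdot)$ as a set-valued map, both of which follow from coercivity and lower semi-continuity of $\phi$.

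The main obstacle I anticipate is part (iii): unlike parts (i) and (ii), which follow from elementary coercivity and lower semi-continuity arguments, the Clarke subdifferential formula requires careful handling because $\M(t)$ may be multi-valued (for instance at thresholding values of $t$ where two distinct minimizers tie), and because $\phi$ is only assumed lower semi-continuous and increasing, not differentiable or convex. I would need to confirm that the marginal-function calculus applies without smoothness of $\phi$ in the inner variable, relying on the fact that the $t$-dependence enters only through the smooth quadratic term while all the irregularity of $\phi$ is confined to $u$; this separation is precisely what makes the clean formula $\partial^\c\nu(t)=\co(\{t-u:u\in\M(t)\})$ hold.
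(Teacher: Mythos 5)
Your parts (i) and (ii) follow essentially the paper's own route: well-definedness is obtained as a one-dimensional instance of Theorem \ref{th1}, and local Lipschitz continuity is proved by testing the minimizer at one parameter value in the objective at the other, together with the bound on minimizers (the paper gets $|u^*-t|\le |t|$, hence $|u^*|\le 2\max\{|t_1|,|t_2|\}$, by comparing with $u=0$ and using $\phi\ge 0$). For part (iii), however, you take a genuinely different route. The paper proves the two inclusions by hand: $\co(\{t-u: u\in\M(t)\})\subseteq\partial^\c\nu(t)$ via a directional-derivative estimate combined with \cite[Corollary 1.10]{Clarke1975}, and the reverse inclusion via the gradient-limit representation $\partial^\c\nu(t)=\co(S(t))$, showing that any limit of minimizers $u_k\in\M(t_k)$ belongs to $\M(t)$ (this is exactly where lower semi-continuity of $\phi$ enters) and that $\nu'(t_k)=t_k-u_k$ at points of differentiability. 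You instead invoke a Danskin-type marginal-function theorem; this works, but three points in your plan need to be made precise. First, the classical Danskin theorem is unavailable here because it requires joint continuity of the inner function, while $\phi$ is only lower semi-continuous; the correct tool is Clarke's pointwise-maximum theorem (\cite[Theorem 2.8.2]{Clarke83}), which requires only upper semicontinuity of $u\mapsto -\frac12(u-t)^2-\phi(|u|)$, i.e., precisely lsc of $\phi$. Second, that theorem treats maxima, so you must pass to $-\nu(t)=\max_u\{-\frac12(u-t)^2-\phi(|u|)\}$ and use $\partial^\c(-\nu)=-\partial^\c\nu$; moreover the theorem gives \emph{equality} (rather than only the inclusion $\partial^\c\nu(t)\subseteq\co(\{t-u:u\in\M(t)\})$) only when each inner function is Clarke-regular in $t$, which holds here because the $t$-dependence is smooth --- you gesture at this separation of variables, but it is the hypothesis on which the equality hinges, not merely a convenience. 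Third, the index set must be compact, so you must restrict $u$ to a bounded interval containing $\M(t')$ for all $t'$ near $t$, using the same minimizer bound as in part (ii); your "local uniform boundedness of the minimizer set and upper semi-continuity of $\M(\cdot)$" are the right ingredients for this and are provable exactly as you indicate. With these points filled in, your argument is correct and shorter than the paper's, at the price of leaning on the precise hypotheses of a cited calculus theorem, whereas the paper's self-contained two-inclusion proof makes transparent where each assumption on $\phi$ is used.
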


\begin{proof}
(i) The well-definedness of $\nu$ follows from Theorem \ref{th1}.

(ii) Let $t_1 < t_2$ be arbitrarily chosen. For any $t \in [t_1, t_2]$ and $u^* \in \M(t)$, it follows from \eqref{vdef} and $\phi (0) = 0$ that 
\[
\frac{1}{2} (u^*-t)^2 + \phi(|u^*|) = \nu (t) \leq \frac{1}{2} (0-t)^2 + \phi(0) = \frac{1}{2} t^2,
\]
with together with $\phi(|u^*|) \geq 0$ implies $|u^*-t| \le |t|$. By this and 
$t \in [t_1, t_2]$, one has
\beq \label{ineq5}
-2 \max \{|t_1|,|t_2|\} \leq t-|t| \leq u^* \leq t+|t| \leq 2 \max \{|t_1|,|t_2|\}.
\eeq
Hence, we obtain that for any $t \in [t_1, t_2]$, 
\beq \label{u1}
|u^*| \leq 2 \max \{|t_1|,|t_2|\}\quad \quad \forall u^* \in \M(t).
\eeq
For any $\tilde{t}_1,\tilde{t}_2 \in [t_1, t_2]$, let $u^*_1 \in \M(\tilde{t}_1)$ and $u^*_2 \in \M(\tilde{t}_2)$. In view of \eqref{vdef} and \eqref{u1}, we can see that
\begin{align*}
\nu (\tilde{t}_1) & \leq \frac{1}{2} (u^*_2-\tilde{t}_1)^2 + \phi(|u^*_2|) = \frac{1}{2} (u^*_2-\tilde{t}_2)^2 + \phi(|\tilde{u}^*_2|) + (\tilde{t}_2-\tilde{t}_1)\left(u^*_2- \frac{\tilde{t}_1+\tilde{t}_2}{2}\right)\\
& \leq \nu(\tilde{t}_2) + |\tilde{t}_1-\tilde{t}_2| \left(|u^*_2|+\frac{1}{2}|\tilde{t}_1|+\frac{1}{2}|\tilde{t}_2|\right) \leq \nu(\tilde{t}_2) + 3\max \{ |t_1|,|t_2| \} |\tilde{t}_1-\tilde{t}_2|.
\end{align*}
Analogously, one can show that
\beq \nn
\nu (\tilde{t}_2) \leq \nu (\tilde{t}_1) + 3\max \{|t_1|,|t_2|\} |\tilde{t}_1-\tilde{t}_2|.
\eeq
It then follows that 
\beq \nn
|\nu (\tilde{t}_1) - \nu (\tilde{t}_2)| \leq 3\max \{|t_1|,|t_2|\} |\tilde{t}_1-\tilde{t}_2| \quad \quad \forall \tilde{t}_1,\tilde{t}_2 \in [t_1, t_2], 
\eeq
and hence statement (\romannumeral2) holds.

(iii) Since $\nu$ is locally Lipschitz everywhere, it is differentiable almost everywhere, and moreover, $\partial^\c \nu(t)=\co(S(t))$ for all $t\in\Re$, where  
\[
S(t) = \left\{\xi: \exists t_k  \to t \ \mbox{such that} \ \nu'(t_k) \to \xi \right\} \quad\quad \forall t \in \Re,
\]
and $\nu'(t_k)$ denotes the standard derivative of $\nu$ at $t_k$. 

Given any $t \in \Re$, $u^* \in \M(t)$ and $h \in \Re$, we have 
\begin{align*}
(t-u^*)h & = \lim_{\delta \downarrow 0} \frac{1}{\delta}\left[\frac{1}{2} \|t + \delta h - u^*\|^2 - \frac{1}{2} \|t - u^*\|^2\right]\\
& = \lim_{\delta \downarrow 0} \frac{1}{\delta}\left[\left(\frac{1}{2} \|t + \delta h - u^*\|^2 + \phi(|u^*|)\right) - \left(\frac{1}{2} \|t - u^*\|^2 + \phi(|u^*|)\right)\right]\\
& \geq \liminf_{\delta \downarrow 0} \frac{\nu (t+\delta h) - \nu(t)}{\delta},
\end{align*} 
where the last inequality follows from $u^* \in \M(t)$ and the definition of $\nu$.
Using this inequality and \cite[Corollary 1.10]{Clarke1975}, we obtain that $t-u^* \in \partial^\c \nu (t)$. Due to the arbitrarity of $u^* \in \M(t)$ and the convexity of $\partial^\c \nu (t)$,  one further has
\beq \label{ineq3}
\co(\{t-u: u \in \M(t)\}) \subseteq \partial^\c \nu (t).
\eeq

It remains to show $\partial^\c \nu (t) \subseteq \co(\{t-u: u \in \M(t)\})$. Indeed, let $\xi \in S(t)$ be arbitrarily chosen. Then there exists some sequence $\{ t_k \}$ such that $t_k\to t$ and $\nu' (t_k)\to \xi$. Choose $u_k \in \M(t_k)$ arbitrarily and let $\ut$, $\bt$ be such that $\{t_k\} \subseteq [\ut, \bt]$. From \eqref{ineq5}, we know that $|u_k| \leq 2 \max \{|\ut|,|\bt|\}$ and hence $\{u_k\}$ is bounded.  Considering a convergent subsequence if necessary, assume without loss of generality that $u_k \to u^*$ for some $u^*\in\Re$. It follows from $u_k \in \M(t_k)$ and the definition of $\M(\cdot)$ that
\beq \nn
\frac{1}{2} (u-t_k)^2 + \phi(|u|) \geq \frac{1}{2} (u_k-t_k)^2 + \phi(|u_k|) \quad\quad \forall u \in \Re.
\eeq
Taking $\liminf$ on both sides of this relation as $k \to \infty$ and using the lower semi-continuity of $\phi$, we have 
\[
\frac{1}{2} (u-t)^2 + \phi(|u|) \geq \liminf_{k \to \infty} \left(\frac{1}{2} (u_k-t_k)^2 + \phi(|u_k|)\right) \ge \frac{1}{2} (u^*-t)^2 + \phi(|u^*|)\quad\quad \forall u \in \Re.
\]
This yields $u^* \in \M(t)$. In addition, since $\nu$ is differentiable at $t_k$, one has $\partial^c\nu(t_k)=\{\nu'(t_k)\}$, which together with $u_k \in \M(t_k)$ and \eqref{ineq3} implies $\nu'(t_k)=t_k - u_k$ for all $k$.  
In view of this, $t_k \to t$, $u_k \to u^*$ and $\nu'(t_k) \to \xi$, one has
\[
\xi = \lim_{k \to \infty} \nu'(t_k) = \lim_{k \to \infty} (t_k - u_k) = t - u^*.
\]
It follows from this, $u^* \in \M(t)$ and the arbitrarity of $\xi \in S(t)$ 
that $S(t) \subseteq \{t-u: u \in \M(t)\}$, which along with  the fact $\partial^\c \nu(t)=\co(S(t))$ implies that 
$\partial^\c \nu(t) \subseteq \co(\{t-u: u \in \M(t)\})$. The conclusion of statement (iii) immediately follows from this relation and \eqref{ineq3}.
\end{proof}

\gap

We next show that $\nu$ is increasing with respect to $|t|$.

\begin{lemma} \label{lem11}
Let $\nu$ be defined in \eqref{vdef}. Suppose that $\phi$ satisfies Assumption \ref{assump-phi}.  
 Then $\nu(t_1) \geq \nu(t_2)$ for any $t_1$, $t_2$ such that $|t_1| \geq |t_2|$. 
\end{lemma}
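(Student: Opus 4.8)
The plan is to first use a symmetry reduction to pass from $|t_1|\ge|t_2|$ to plain monotonicity on $[0,\infty)$, and then, for the reduced comparison, to manufacture from an optimal solution of the larger-argument problem a competitor for the smaller-argument problem whose objective value does not exceed $\nu(t_1)$.

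First I would observe that $\nu$ is even. Substituting $u\mapsto -u$ in \eqref{vdef} and using $\phi(|{-u}|)=\phi(|u|)$ shows $\nu(-t)=\nu(t)$, hence $\nu(t)=\nu(|t|)$ for all $t$. Consequently the asserted inequality $\nu(t_1)\ge\nu(t_2)$ whenever $|t_1|\ge|t_2|$ is equivalent to $\nu$ being nondecreasing on $[0,\infty)$, so it suffices to fix $0\le t_2\le t_1$ and prove $\nu(t_2)\le\nu(t_1)$. Next I would invoke the bound \eqref{ineq5} derived in the proof of Lemma \ref{lem10}: every $u^*\in\M(t)$ satisfies $t-|t|\le u^*\le t+|t|$, which for $t\ge 0$ forces $0\le u^*\le 2t$. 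Thus, choosing any $u_1\in\M(t_1)$ (nonempty by Lemma \ref{lem10}(i)), we have $u_1\ge 0$, so $\phi(|u_1|)=\phi(u_1)$ and
\[
\nu(t_1)=\tfrac12(u_1-t_1)^2+\phi(u_1).
\]

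I would then construct the competitor $u_2:=\max\{u_1-(t_1-t_2),\,0\}$, which lies in $[0,u_1]$ since $t_1-t_2\ge 0$. Two properties make it work. By monotonicity of $\phi$ and $0\le u_2\le u_1$ we get $\phi(u_2)\le\phi(u_1)$; and a short case check gives $|u_2-t_2|\le|u_1-t_1|$ (if $u_1\ge t_1-t_2$ then $u_2-t_2=u_1-t_1$, so the two are equal, while if $u_1<t_1-t_2$ then $u_2=0$ and $|u_2-t_2|=t_2\le t_1-u_1=|u_1-t_1|$). Feeding $u_2$ into \eqref{vdef} and using $\phi(|u_2|)=\phi(u_2)$ then yields
\[
\nu(t_2)\le\tfrac12(u_2-t_2)^2+\phi(u_2)\le\tfrac12(u_1-t_1)^2+\phi(u_1)=\nu(t_1),
\]
which is the claim.

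The one delicate point, and where I expect to spend the care, is the clamping in the definition of $u_2$: shifting $u_1$ down by $t_1-t_2$ may produce a negative value, at which stage the naive comparison would break. Truncating at $0$ repairs this, but then the subcase $u_1<t_1-t_2$ must be handled separately, and it is precisely there that I would use $\phi(0)=0$ together with $\phi\ge 0$ (both consequences of Assumption \ref{assump-phi}) to bound $\tfrac12 t_2^2$ by $\tfrac12(u_1-t_1)^2\le\nu(t_1)$. The evenness reduction and the monotonicity estimate $\phi(u_2)\le\phi(u_1)$ are routine, so the whole argument hinges on verifying $|u_2-t_2|\le|u_1-t_1|$ in both subcases.
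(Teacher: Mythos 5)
Your proof is correct, but it takes a genuinely different route from the paper's. Both arguments begin with the same evenness reduction ($\nu(-t)=\nu(t)$, so it suffices to prove monotonicity on $[0,\infty)$), but from there the paper goes through nonsmooth analysis: it first shows $\M(t)\subseteq(-\infty,t]$ for $t\ge 0$ by a midpoint contradiction argument, combines this with the Clarke subdifferential formula $\partial^\c\nu(t)=\co(\{t-u: u\in\M(t)\})$ from Lemma \ref{lem10}(iii) to conclude $\partial^\c\nu(t)\subseteq[0,\infty)$ for $t\ge 0$, and then invokes Clarke's mean value theorem to obtain monotonicity. You instead argue by direct comparison of objective values: from a minimizer $u_1\in\M(t_1)$ (nonempty by Lemma \ref{lem10}(i), nonnegative by \eqref{ineq5}) you build the explicit competitor $u_2=\max\{u_1-(t_1-t_2),\,0\}$ for the $t_2$-problem and verify $\phi(u_2)\le\phi(u_1)$ and $|u_2-t_2|\le|u_1-t_1|$; your two-case check is sound, including the clamping case $u_1<t_1-t_2$, where $u_1<t_1-t_2\le t_1$ indeed gives $|u_2-t_2|=t_2<t_1-u_1=|u_1-t_1|$. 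What your approach buys is elementarity and independence: it uses only the existence of minimizers together with the monotonicity and nonnegativity of $\phi$, so it needs neither the local Lipschitzness of $\nu$, nor the subdifferential characterization, nor the mean value theorem --- Lemma \ref{lem10}(ii)--(iii) could be dispensed with entirely if this lemma were the only goal. What the paper's route buys is reuse of machinery it has already built, recording in passing the structural facts that minimizers never exceed $t$ and that $\nu$ has nonnegative Clarke subgradients on $[0,\infty)$, at the cost of heavier tools. One minor remark: your appeal to \eqref{ineq5} for $0\le u_1\le 2t_1$ is legitimate, though you could make the argument fully self-contained by the one-line observation that comparing with $u=0$ in \eqref{vdef} gives $|u_1-t_1|\le t_1$ because $\phi\ge 0$.
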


\begin{proof}
Observe that $\nu(-t) = \nu(t)$ for all $t\in\Re$. To prove this lemma, it thus  suffices to show that $\nu(t)$ is increasing on $[0,\infty)$. We first claim that $\M(t) \subseteq (-\infty,t]$ for all $t \ge 0$, where $\M(t)$ is defined in \eqref{Mt}. Suppose for contradiction that there exists some $t \ge 0$ and $u^* \in \M(t)$ such that $u^* > t$. Let $u = (u^* + t)/2$. 
Clearly $0\le t<u<u^*$. By this and the monotonicity of $\phi$ on $[0,\infty)$, on can see that 
\[
\frac{1}{2} (u-t)^2 +\phi(|u|) < \frac{1}{2} (u^*-t)^2 +\phi(|u^*|),
\]
which contradicts $u^* \in \M(t)$. Hence, $\M(t) \subseteq (-\infty,t]$ 
for all $t \ge 0$.  This together with Lemma \ref{lem10} (iii) implies that 
$\partial^\c \nu(t) \in [0,\infty)$ for all $t \ge 0$. Further, by the mean value theorem \cite[Theorem 2.3.7]{Clarke83},  one has
\[
\nu(t_1) - \nu(t_2) \in \partial^\c\nu(\tilde t) (t_1-t_2) \quad\quad \forall t_1,t_2 \ge 0,
\]
where $\tilde t$ is some point between $t_1$ and $t_2$. Hence, $\nu(t)$ is increasing in $[0,\infty)$ and the conclusion of this lemma holds.
\end{proof}

\gap

We are now ready to discuss how to solve efficiently the subproblem \eqref{subprob} of Algorithm 1.  Clearly, \eqref{subprob} is equivalent to 
\[ 
\min_{x \in \Re^n} \left \{ \frac{1}{2} \left \|x-\left(x^k-\frac{1}{L_k} \triangledown f(x^k)\right)\right \|_2^2 + \frac{\lambda}{L_k}\sum\limits_{i=r+1}^n \phi(|x|_{[i]})\right \},
\]
which is a special case of a more general problem 
\beq \label{p5}
\min_{x \in \Re^n} \left \{ \frac{1}{2} \| x - a \|^2_2 + \tilde{\lambda} \sum\limits_{i=r+1}^n \phi(|x|_{[i]}) \right \}, 
\eeq
for some $a \in \Re^n$ and $\tilde{\lambda} > 0$. In what follows, we show that problem \eqref{p5} can be solved as $n-r$ number of one-dimensional problems in the form of \eqref{vdef}.

\begin{theorem}
Suppose that $\phi$ satisfies Assumption \ref{assump-phi}.  Let $\cI^*$ be the index set corresponding to the $n-r$ smallest entries of $|a|$ and $x^*\in \Re^n$ be defined as follows:
$$x^*_i \in \left\{
    \begin{array}{ll}
      \underset{u \in \Re}{\Argmin}\left \{ \frac{1}{2} (u-a_i)^2 + \tilde{\lambda}\phi(|u|) \right \} & \hbox{if $i \in \cI^*$}, \\
      \{a_i\} & \hbox{otherwise}
    \end{array}
  \right.
\quad i = 1,\ldots, n.$$
Then $x^*$ is an optimal solution of problem \eqref{p5}.
\end{theorem}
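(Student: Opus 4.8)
The plan is to turn the partial regularizer into an ordinary separable one by making explicit the choice of the $r$ coordinates it leaves unpenalized. Write $G(x)$ for the objective in \eqref{p5}, and for an index set $J$ with $|J|=r$ put $G_J(x):=\tfrac12\|x-a\|^2+\tilde\lambda\sum_{i\notin J}\phi(|x_i|)$. Since $\phi$ is nonnegative and increasing under Assumption \ref{assump-phi}, omitting the $r$ largest values $\phi(|x|_{[1]}),\dots,\phi(|x|_{[r]})$ is the same as choosing $J$ of size $r$ to contain the indices of the $r$ largest $|x_i|$, so for every $x\in\Re^n$,
\[
\sum_{i=r+1}^n \phi(|x|_{[i]})=\min_{|J|=r}\ \sum_{i\notin J}\phi(|x_i|).
\]
Consequently $G(x)=\min_{|J|=r}G_J(x)$, and exchanging the two minima,
\[
\min_{x}G(x)=\min_{x}\min_{|J|=r}G_J(x)=\min_{|J|=r}\ \min_{x}G_J(x).
\]

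For fixed $J$ the function $G_J$ is separable, so I would minimize it coordinatewise. Each $i\in J$ contributes $\tfrac12(x_i-a_i)^2$, minimized (with value $0$) at $x_i=a_i$, while each $i\notin J$ contributes $\tfrac12(x_i-a_i)^2+\tilde\lambda\phi(|x_i|)$, whose minimum value is $\tilde\nu(a_i)$ with $\tilde\nu(t):=\min_u\{\tfrac12(u-t)^2+\tilde\lambda\phi(|u|)\}$ and whose minimizers are the elements of $\Argmin_u\{\tfrac12(u-a_i)^2+\tilde\lambda\phi(|u|)\}$. Note $\tilde\nu$ has the form \eqref{vdef} with $\phi$ replaced by $\tilde\lambda\phi$, and $\tilde\lambda\phi$ still satisfies Assumption \ref{assump-phi} because $\tilde\lambda>0$; hence these one-dimensional minimizers exist by Theorem \ref{th1}. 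Thus $\min_x G_J(x)=\sum_{i\notin J}\tilde\nu(a_i)$.

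It remains to choose $J$. Minimizing $\sum_{i\notin J}\tilde\nu(a_i)=\sum_{i=1}^n\tilde\nu(a_i)-\sum_{i\in J}\tilde\nu(a_i)$ over $|J|=r$ amounts to maximizing $\sum_{i\in J}\tilde\nu(a_i)$. Here I would invoke Lemma \ref{lem11} applied to $\tilde\lambda\phi$, which gives that $\tilde\nu$ is nondecreasing in $|t|$; therefore the $r$ indices of largest $|a_i|$ furnish the $r$ largest values $\tilde\nu(a_i)$, so an optimal unpenalized set is $J^*=(\cI^*)^\c$, the complement of the set $\cI^*$ of the $n-r$ smallest entries of $|a|$. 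Substituting $J^*$ into the coordinatewise minimizers reproduces exactly the vector $x^*$ of the statement, namely $x^*_i=a_i$ for $i\notin\cI^*$ and $x^*_i\in\Argmin_u\{\tfrac12(u-a_i)^2+\tilde\lambda\phi(|u|)\}$ for $i\in\cI^*$. Finally the chain $G(x^*)\le G_{J^*}(x^*)=\min_x G_{J^*}(x)=\min_x G(x)\le G(x^*)$ forces equality throughout, so $x^*$ is a global minimizer of \eqref{p5}.

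The main obstacle is the opening identity together with the exchange of minima: one must check with care that $\min_{|J|=r}\sum_{i\notin J}\phi(|x_i|)$ coincides with $\sum_{i=r+1}^n\phi(|x|_{[i]})$ (this uses $\phi\ge0$ and monotonicity and must accommodate ties among the $|x_i|$), and that Lemma \ref{lem11} correctly aligns the maximizing set $J^*$ with the $r$ coordinates of largest magnitude. Once these order-theoretic facts are in place, the remaining separable minimization and the concluding chain of (in)equalities are routine.
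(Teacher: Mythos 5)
Your proof is correct and follows essentially the same route as the paper: rewrite the partial regularizer as a minimum over choices of the $r$ unpenalized coordinates (the paper uses binary selection vectors $s$ with $\|s\|_0=n-r$, a trivially equivalent parameterization), exchange the two minimizations, solve the resulting separable problems via the one-dimensional function $\nu$ of \eqref{vdef}, and invoke Lemma \ref{lem11} to identify the optimal index set as $\cI^*$. Your explicit remark that $\tilde{\lambda}\phi$ still satisfies Assumption \ref{assump-phi} (so that Theorem \ref{th1} and Lemma \ref{lem11} apply to $\tilde{\nu}$) is a small point of care that the paper leaves implicit, but it does not change the argument.
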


\begin{proof}
Let $\nu$ be defined in \eqref{vdef}. In view of  the monotonicity of $\phi$ and the definition of $\nu$, one can observe that
\begin{align*}
& \min_{x \in \Re^n} \frac{1}{2} \| x - a \|^2 + \tilde{\lambda} \sum\limits_{i=r+1}^n \phi(|x|_{[i]})  = \min_{x \in \Re^n} \left \{\min_{\substack{s \in \{0,1\}^n \\ \| s \|_0 = n-r}} \frac{1}{2} \| x - a \|^2 + \tilde{\lambda} \sum\limits_{i=1}^n s_i \phi(|x_i|)\right \} \\
& = \min_{\substack{s \in \{0,1\}^n \\ \| s \|_0 = n-r}} \left \{ \min_{x \in \Re^n} \frac{1}{2} \| x - a \|^2 + \tilde{\lambda} \sum\limits_{i=1}^n s_i \phi(|x_i|) \right \} 
 = \min_{\substack{I \subseteq \{1,\ldots,n\} \\ |I| = n-r}} \sum\limits_{i \in I} \nu(a_i) = \sum\limits_{i \in \cI^*} \nu(a_i),
\end{align*}
where the last equality follows from Lemma \ref{lem11} and the definition of $\cI^*$. In addition, by the definition of $x^*$, one can verify that 
\[
\frac{1}{2} \| x^* - a \|^2 + \tilde{\lambda} \sum\limits_{i=r+1}^n \phi(|x^*|_{[i]}) = \sum\limits_{i \in \cI^*} \nu(a_i),
\]
and hence the conclusion holds.
\end{proof}

\subsection{Sparse recovery for a noiseless linear system}
\label{noiseless-sys}

In this subsection we propose a feasible augmented Lagrangian (FAL) method for solving the partially regularized sparse recovery model for a noiseless linear 
system, namely, problem \eqref{p2}. For convenience of presentation, let $\Phi(x) := \sum^n_{i=r+1} \phi(|x|_{[i]})$ throughout this subsection. 



For any given $\varrho>0$ and $\mu\in\Re^m$, the augmented Lagrangian (AL) function for problem  \eqref{p2} is defined as
\begin{eqnarray}\label{auglag}
\cL(x;\mu,\varrho):= w(x;\mu,\varrho)+\Phi(x),
\end{eqnarray}
where 
\beq \label{w}
w(x;\mu,\varrho) := \mu^T(Ax-b)+\frac{\varrho}{2}\|Ax-b\|^2.
\eeq 
Similar to the classical AL method, our FAL method solves a sequence of AL subproblems in the form of
\begin{eqnarray}\label{subp augmented}
 \min\limits_{x} \ \cL(x;\mu,\varrho)
\end{eqnarray}
while updating the Lagrangian multiplier $\mu$ and the penalty parameter $\varrho$. It is easy to observe that problem \eqref{subp augmented}   
 is a special case of  \eqref{pruo} and satisfies Assumption  \ref{assump-pruo}. Therefore, it can be suitably solved by Algorithm 1.

 It is well known that the classical AL method may converge to an infeasible point. To remedy this pathological behavior, we adopt the same strategies proposed by Lu and Zhang   
\cite{int:Lu} so that the values of the AL function along the solution sequence 
generated by the FAL method are bounded from above, and the magnitude of 
penalty parameters outgrows that of Lagrangian multipliers. 

Throughout this section, let $x^{\rm feas}$ be an arbitrary feasible point of problem \eqref{p2}. We are now ready to present the FAL method for solving \eqref{p2}.

\gap

\noindent
{\bf Algorithm 2: A feasible augmented Lagrangian method for \eqref{p2}} \\ [5pt]
Choose $\mu^0\in \Re^m$, $x^0\in\Re^n$,  $\varrho_0>0$, $\gamma\in (1,\infty)$, $\theta>0$, 
$\eta\in (0,1)$, a decreasing positive sequence $\{\epsilon_k\}$ with 
$\lim\limits_{k\to\infty}\epsilon_k=0$, and a constant $\Upsilon\geq \max\{\Phi (x^{\feas}),\cL(x^0,\mu^0,\varrho_0)\}$.  Set $k=0$.
\begin{itemize}
\item[1)] Apply Algorithm 1 to find an approximate stationary point $x^{k+1}$ for problem \eqref{subp augmented} with $\mu=\mu^k$ and $\varrho=\varrho_k$ such that
    \begin{equation}\label{innerstop}
    \dist(0,\nabla  w(x^{k+1};\mu^k,\varrho_k)+\partial \Phi(x^{k+1}))\leq\epsilon_k,\quad\quad \cL(x^{k+1},\mu^k,\varrho_k)\leq \Upsilon.
    \end{equation}
\item[2)] Set $\mu^{k+1}=\mu^k+\varrho_k(Ax^{k+1}-b)$.
\item[3)] If  $\|Ax^{k+1}-b\|\leq\eta\|Ax^{k}-b\|$, then set $\varrho_{k+1}=\varrho_k$. Otherwise, set $$\varrho_{k+1}=\max\{\gamma\varrho_{k},\|\mu^{k+1}\|^{1+\theta}\}.$$
\item[4)]  Set $k\leftarrow k+1$ and go to Step 1).
\end{itemize}
\noindent
{\bf end}

%
%
%
%

\vgap

We now discuss how to find by Algorithm 1 an approximate stationary point 
$x^{k+1}$ of problem \eqref{subp augmented} with $\mu=\mu^k$ and 
$\varrho=\varrho_k$ such that \eqref{innerstop} holds . Given $k \ge 0$, let $x^k_\init$ denote the initial point of Algorithm 1 when 
applied to solve problem \eqref{subp augmented}  with $\mu=\mu^k$ and 
$\varrho=\varrho_k$, which is specified as follows:
\[
x^k_\init = \left\{
\ba{ll}
x^{\rm feas} & \mbox{if} \ \cL(x^{k},\mu^k,\varrho_k) > \Upsilon, \\
x^{k}  &\mbox{otherwise},
\ea \right.
\]
where $x^{k}$ is the approximate stationary point of problem 
\eqref{subp augmented}  with $\mu=\mu^{k-1}$ and 
$\varrho=\varrho_{k-1}$ obtained at the $(k-1)$th outer iteration of Algorithm 2.  Notice that $Ax^{\rm feas}=b$. It follows 
from \eqref{auglag} and the definition of $\Upsilon$  that
\[
\cL(x^{\rm feas};\mu^k,\varrho_k)  =  \Phi(x^{\rm feas})  \ \le \ \Upsilon.
\]
This inequality and the above choice of $x^k_\init$ imply 
$\cL(x^k_\init,\mu^k,\varrho_k) \le \Upsilon$.
In addition, by Theorem \ref{le32}, we know that all subsequent objective 
values of Algorithm 1 are bounded above the initial objective value. It follows that 
\[
\cL(x^{k+1};\mu^k,\varrho_k) \ \le  \ \cL(x^k_\init,\mu^k,\varrho_k)
\ \le \ \Upsilon,
\]
and hence the second relation of \eqref{innerstop} is satisfied at $x^{k+1}$. Finally, as seen 
from Theorem \ref{npg-converge} (iii),  Algorithm 1 is capable of finding an approximate 
solution $x^{k+1}$ satisfying the first relation of \eqref{innerstop}.

\vgap

We next establish a global convergence result for the above FAL method.

\begin{theorem}
Let $\{x^k\}$ be the sequence generated by Algorithm 2. Suppose that $x^*$ is an accumulation point of $\{x^k\}$. Then the following statements hold:
\bi
\item[(i)] $x^*$ is a feasible point of problem \eqref{p2};
\item[(ii)] Suppose additionally that $\partial^\infty \Phi(x^*) \cap \range(A^T)=\{0\}$.
 Then $x^*$ is a 
first-order stationary point of problem \eqref{p2}, that is, $Ax^*=b$ and 
$0 \in A^T \mu^* + \partial \Phi(x^*)$ for some $\mu^*\in\Re^m$.
\ei
\end{theorem}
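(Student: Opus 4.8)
The plan is to treat the two assertions separately, following the feasible augmented Lagrangian analysis of \cite{int:Lu}. Throughout I will use the multiplier update $\mu^{k+1}=\mu^k+\varrho_k(Ax^{k+1}-b)$ from Step 2, together with the algebraic identity
\[
\cL(x^{k+1};\mu^k,\varrho_k)=\frac{\|\mu^{k+1}\|^2-\|\mu^k\|^2}{2\varrho_k}+\Phi(x^{k+1}),
\]
obtained by completing the square in \eqref{auglag}--\eqref{w}. Since $\Phi\ge 0$ (because $\phi$ is increasing with $\phi(0)=0$), the upper bound $\cL(x^{k+1};\mu^k,\varrho_k)\le\Upsilon$ from \eqref{innerstop} is the main analytic input.

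For (i) I would split on whether $\{\varrho_k\}$ is bounded. If it is bounded, then the ``otherwise'' branch of Step 3, which multiplies $\varrho$ by at least $\gamma>1$, can be invoked only finitely often, so for all large $k$ the test $\|Ax^{k+1}-b\|\le\eta\|Ax^k-b\|$ holds; since $\eta\in(0,1)$ this forces $\|Ax^k-b\|\to 0$, and hence $Ax^*=b$ by continuity. If instead $\varrho_k\to\infty$, I would combine the identity above, $\cL(x^{k+1};\mu^k,\varrho_k)\le\Upsilon$, and $\Phi\ge 0$ to get $\tfrac{\varrho_k}{2}\|Ax^{k+1}-b\|^2\le \Upsilon+\|\mu^k\|\,\|Ax^{k+1}-b\|$, which solving the quadratic in $\|Ax^{k+1}-b\|$ yields $\|Ax^{k+1}-b\|\le \big(2\|\mu^k\|+\sqrt{2\varrho_k\Upsilon}\big)/\varrho_k$. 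The term $\sqrt{2\Upsilon/\varrho_k}\to 0$, so it remains to show $\|\mu^k\|/\varrho_k\to 0$; here the design of Step 3 enters, since at each penalty increase $\varrho_{k+1}\ge\|\mu^{k+1}\|^{1+\theta}$ gives $\|\mu^{k+1}\|/\varrho_{k+1}\le\varrho_{k+1}^{-\theta/(1+\theta)}\to 0$, while between consecutive increases $\varrho$ is frozen and the residuals decay geometrically so that $\mu^k$ cannot outgrow $\varrho_k$. I expect this multiplier/penalty bookkeeping in the unbounded-penalty case to be the main obstacle for (i).

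For (ii) the starting point is that the first relation in \eqref{innerstop}, together with $\nabla w(x^{k+1};\mu^k,\varrho_k)=A^T\mu^{k+1}$, furnishes $\zeta^{k+1}:=-A^T\mu^{k+1}+e^{k+1}\in\partial\Phi(x^{k+1})$ with $\|e^{k+1}\|\le\epsilon_k\to 0$. Let $\{x^{k+1}\}_{k\in S}\to x^*$ realize the accumulation point, and decompose $\mu^{k+1}=\hat\mu^{k+1}+\nu$ with $\hat\mu^{k+1}\in\range(A)$ and $\nu\in\ker(A^T)$ fixed (the update increments lie in $\range(A)$ since $b\in\range(A)$), so that $A^T\mu^{k+1}=A^T\hat\mu^{k+1}$. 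The crux is to prove $\{\hat\mu^{k+1}\}_{k\in S}$ is bounded, which I would do by contradiction: if $\|\hat\mu^{k+1}\|\to\infty$ along a further subsequence, normalize to get $\hat\mu^{k+1}/\|\hat\mu^{k+1}\|\to\bar\mu\in\range(A)$ with $\|\bar\mu\|=1$; dividing the inclusion $\zeta^{k+1}\in\partial\Phi(x^{k+1})$ by $\|\hat\mu^{k+1}\|$ and using $x^{k+1}\stackrel{\Phi}{\to}x^*$ and the definition of the horizon subdifferential yields $-A^T\bar\mu\in\partial^\infty\Phi(x^*)$. Since $-A^T\bar\mu\in\range(A^T)$, the hypothesis $\partial^\infty\Phi(x^*)\cap\range(A^T)=\{0\}$ forces $A^T\bar\mu=0$, whence $\bar\mu\in\range(A)\cap\ker(A^T)=\{0\}$, contradicting $\|\bar\mu\|=1$. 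This boundedness step is exactly where the constraint-qualification hypothesis is indispensable, and I regard it as the heart of (ii).

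With boundedness established, I would pass to a convergent subsequence $\hat\mu^{k+1}\to\mu^*$; then $\zeta^{k+1}=-A^T\hat\mu^{k+1}+e^{k+1}\to -A^T\mu^*$, and the outer semicontinuity of $\partial\Phi$, applied via $x^{k+1}\stackrel{\Phi}{\to}x^*$ exactly as in the proof of Theorem \ref{npg-converge}(ii), gives $-A^T\mu^*\in\partial\Phi(x^*)$, i.e.\ $0\in A^T\mu^*+\partial\Phi(x^*)$. Combined with $Ax^*=b$ from part (i), this shows $x^*$ is a first-order stationary point of problem \eqref{p2}.
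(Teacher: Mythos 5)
Your proposal is correct, and for part (i) it is essentially the paper's argument: the same dichotomy on boundedness of $\{\varrho_k\}$, geometric decay of $\|Ax^k-b\|$ in the bounded case, and in the unbounded case the combination of the bound $\cL(x^{k+1};\mu^k,\varrho_k)\le\Upsilon$ from \eqref{innerstop} with the update rule $\varrho_{k+1}\ge\|\mu^{k+1}\|^{1+\theta}$ to force $\|\mu^k\|/\varrho_k\to 0$ (your completed-square identity is just a repackaging of the paper's inner-product manipulation). For part (ii), however, you take a genuinely different route. The paper first proves that the subgradients $v^k\in\partial\Phi(x^{k+1})$ are bounded, via exactly the normalization/horizon-subdifferential contradiction you use, and then—because the multipliers $\mu^{k+1}$ may still blow up in $\ker(A^T)$ directions—invokes Hoffman's lemma to manufacture an auxiliary bounded sequence $\bar\mu^{k+1}$ with $A^T\bar\mu^{k+1}=\xi^k-v^k$ before passing to the limit. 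You instead decompose $\mu^{k+1}=\hat\mu^{k+1}+\nu$ with $\hat\mu^{k+1}\in\range(A)$ and $\nu\in\ker(A^T)$ fixed (legitimate here since feasibility of \eqref{p2} gives $b\in\range(A)$), and run the contradiction directly on $\hat\mu^{k+1}$, using $\range(A)\cap\ker(A^T)=\{0\}$ to annihilate the normalized limit. The constraint qualification enters both proofs identically; the difference is only in how multiplier non-uniqueness is neutralized: your orthogonal projection is more self-contained (no external lemma) and delivers $\mu^*$ as a limit of projections of the actual algorithm multipliers, while the paper's Hoffman route works without ever splitting $\Re^m$ and would apply verbatim even if one did not know $b\in\range(A)$. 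One further remark on part (i): the bookkeeping you flag as the ``main obstacle'' is a real subtlety, but the paper's own proof elides it in the same way—its relation \eqref{lag-pen} is justified only at indices immediately following a penalty increase, which need not be indices of the subsequence converging to $x^*$—and your sketch (geometric residual decay between increases controlling the growth of $\mu^k$ while $\varrho$ is frozen) supplies precisely the ingredients the paper leaves implicit, so your treatment is at least as rigorous as the published one.
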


\begin{proof} 
Suppose that $x^*$ is an accumulation point of $\{x^k\}$. Considering a convergent subsequence if 
necessary, assume without loss of generality that $x^k \to x^*$ as $k\to \infty$. 

(i) We show that $x^*$ is a feasible point of problem \eqref{p2}, that is, 
$Ax^*=b$, by considering two separate cases as follows.

Case (a): $\{\varrho_k\}$ is bounded. This together with Step 3) of Algorithm 2 implies that  
$\|Ax^{k+1}-b\|\leq\eta\|Ax^{k}-b\|$
holds for all sufficiently large $k$. It then follows from this and $x^k\to x^*$ that $Ax^*=b$.

Case (b): $\{\varrho_k\}$ is unbounded. Considering a subsequence if necessary, assume for 
convenience that $\{\varrho_k\}$ strictly increases and $\varrho_k\to\infty$ as $k\to\infty$. This along with 
Step 3) of Algorithm 2 implies that 
$\varrho_{k+1}=\max\{\gamma\varrho_k,\|\mu^{k+1}\|^{1+\theta }\}$ holds for all $k$. It then follows from this and $\rho_k\to\infty$ that
\beq \label{lag-pen}
0 \le \varrho_k^{-1}\|\mu^k\| \le \varrho_k^{-\theta/(1+\theta)}\to 0.
\eeq
In view of \eqref{auglag} and the second relation in \eqref{innerstop}, one has 
\begin{equation*}
(\mu^k)^T(Ax^{k+1}-b)+\frac{\varrho_k}{2}\|Ax^{k+1}-b\|^2+\Phi(x^{k+1}) \leq \Upsilon.
\end{equation*}
It then follows that
\[
\|Ax^{k+1}-b\|^2 \leq 2\varrho_k^{-1}\left(\Upsilon-\Phi(x^{k+1})\right)+2\varrho_k^{-1}\|\mu^k\| \|Ax^{k+1}-b\|.
\]
Taking limits on both sides of this relation as $k\to\infty$, and using \eqref{lag-pen}, $x^k\to x^*$, $\varrho_k\to\infty$ and the lower 
boundedness of $\Phi$,  we obtain $Ax^*=b$ again.

(ii) We show that $x^*$ is a first-order stationary point of problem \eqref{p2}. It follows from \eqref{w}, the first relation in \eqref{innerstop} and Step 2) of Algorithm 2 that there exist some $\xi^k\in \Re^n$ and $v^k \in \partial \Phi(x^{k+1})$ such that $\|\xi^k\| \leq\epsilon_k$ and 
\begin{equation}\label{linear system}
 \xi^k=A^T\mu^{k+1}+v^k.
\end{equation}
Since $\|\xi^k\|\le \epsilon_k$ and $\epsilon_k \to 0$, we have $\|\xi^k\| \to 0$. Claim that $\{v^k\}$ is bounded. Suppose for contradiction that $\{v^k\}$ is unbounded. Considering a subsequence if necessary,  assume for convenience that $\|v^k\| \to \infty$ 
and $v^k/\|v^k\| \to v$ for some $v\in\Re^n$. Clearly $v \neq 0$. By $x^k \to x^*$,  $v^k \in \partial \Phi(x^{k+1})$, $\|v^k\| \to \infty$ and  
$v^k/\|v^k\| \to v$, one can see that  
$v\in\partial^\infty\Phi(x^*)$.  Dividing both sides of \eqref{linear system} by $\|v^k\|$ and taking limits as $k \to \infty$, we obtain $0 \neq v \in \partial^\infty\Phi(x^*) \cap \range(A^T)$, which 
contradicts the assumption $\partial^\infty \Phi(x^*) \cap \range(A^T)=\{0\}$. Thus $\{v^k\}$ is bounded.  Since \eqref{linear system} is a linear system with respect to $\mu^{k+1}$, 
it follows from Hoffman lemma \cite{Hoffman1952} that there exist $\bar{\mu}^{k+1}\in \Re^m$ and a constant 
$\theta>0$ (independent of $k$) such that
\beq
\A^T \bar{\mu}^{k+1}= \xi^k-v^k, \quad\quad \|\bar{\mu}^{k+1}\|\leq \theta \|\xi^k-v^k\|. \label{bound}
\eeq
Using the second relation of \eqref{bound}  and the boundedness of $\{v^k\}$ and $\{\xi^k\}$, one can  
see that $\{\bar{\mu}^{k+1}\}$ is bounded. Considering a subsequence if necessary, assume for convenience 
 that $\bar{\mu}^{k+1}\to \mu^*$ and $v^k \to v^*$ for some $\mu^*\in\Re^m$ and $v^*\in\Re^n $. By $x^k \to x^*$,  $v^k \in \partial \Phi(x^{k+1})$ and the outer semi-continuity of $\partial \Phi$, one has $v^*\in\partial \Phi(x^*)$.  Using $\xi^k \to 0$, $\bar{\mu}^{k+1}\to \mu^*$, $v^k \to v^*\in\partial \Phi(x^*)$ and taking limits on both sides of the first relation of \eqref{bound} as $k\to\infty$, we obtain  $0\in A^T\mu^*+ \partial \Phi(x^*)$. 
This along with $Ax^*=b$ implies that $x^*$ is a first-order stationary 
point of \eqref{p2}.
\end{proof}

\subsection{Sparse recovery for a noisy linear system}
\label{noisy-sys}


In this subsection we propose a FAL method for solving the partially regularized sparse recovery model for a noisy linear 
system, namely, problem \eqref{pL1}. For convenience of presentation, let $\Phi(x) := \sum^n_{i=r+1} \phi(|x|_{[i]})$ throughout this subsection.

For any given $\varrho>0$ and $\mu \ge 0$, the AL function for problem \eqref{pL1} is defined as
\begin{eqnarray}\label{auglagn}
\tL(x;\mu,\varrho):= \tw(x;\mu,\varrho)+\Phi(x),
\end{eqnarray}
where 
\beq \label{tw}
\tw(x;\mu,\varrho) := \frac{1}{2\varrho}\left([\mu+\varrho(\|Ax-b\|^2-\sigma^2)]_+^2-\mu^2\right).
\eeq 
Similar to the classical AL method, our FAL method solves a sequence of AL subproblems in the form of
\begin{eqnarray}\label{aug-subprobn}
 \min\limits_{x} \ \tL(x;\mu,\varrho).
\end{eqnarray}
By a similar argument as in \cite{clp2014}, one can verify that for any given $\mu \ge 0$ and $\rho \ge 0$, 
problem \eqref{aug-subprobn} is a special case of \eqref{pruo} and satisfies Assumption \ref{assump-pruo}. Therefore, 
the NPG method presented in Algorithm 1 can be suitably applied to solve  \eqref{aug-subprobn}.

For a similar reason as mentioned in Subsection \ref{noiseless-sys}, we adopt 
the same strategies proposed in \cite{int:Lu} so that the feasibility of any accumulation point of the solution sequence generated by our 
method is guaranteed. Throughout this section, let $x^{\rm feas}$ be an arbitrary feasible point of problem \eqref{pL1}. We are now ready to present the FAL method for solving \eqref{pL1}.

\gap

\noindent
{\bf Algorithm 3: A feasible augmented Lagrangian method for \eqref{pL1}} \\ [5pt]
Choose $\mu_0 \ge 0$, $x^0\in\Re^n$,  $\varrho_0>0$, $\gamma\in (1,\infty)$, $\theta>0$, 
$\eta\in (0,1)$, a decreasing positive sequence $\{\epsilon_k\}$ with 
$\lim\limits_{k\to\infty}\epsilon_k=0$, and a constant $\Upsilon\geq \max\{\Phi (x^{\feas}),\tL(x^0,\mu_0,\varrho_0)\}$.  Set $k=0$.
\begin{itemize}
\item[1)] Apply Algorithm 1 to find an approximate stationary point $x^{k+1}$ for problem \eqref{aug-subprobn} with $\mu=\mu_k$ and $\varrho=\varrho_k$ such that
    \begin{equation}\label{innerstopn}
    \dist(0,\nabla \tw(x^{k+1};\mu_k,\varrho_k)+\partial \Phi(x^{k+1}))\leq\epsilon_k,\quad\quad \tL(x^{k+1},\mu_k,\varrho_k)\leq \Upsilon.
    \end{equation}
\item[2)] Set $\mu_{k+1}=[\mu_k+\varrho_k(\|Ax^{k+1}-b\|^2-\sigma^2)]_+$.
\item[3)] If  $[\|Ax^{k+1}-b\|-\sigma]_+\leq\eta[\|Ax^k-b\|-\sigma]_+$, then set $\varrho_{k+1}=\varrho_k$. Otherwise, set $$\varrho_{k+1}=\max\{\gamma\varrho_{k},\mu_{k+1}^{1+\theta }\}.$$
\item[4)] Set $k\leftarrow k+1$ and go to Step 1).
\end{itemize}
\noindent
{\bf end}

%
%
%
%
\vgap
By a similar argument as in Subsection \ref{noiseless-sys}, one can observe that $x^{k+1}$ satisfying \eqref{innerstopn} can be found by Algorithm 1. We next establish a global convergence result for the above FAL method.

\begin{theorem}
Let $\{x^k\}$ be a sequence generated by Algorithm 3. Suppose that $x^*$ is  an accumulation point of $\{x^k\}$. Then the following statements hold:
\bi
\item[(i)] $x^*$ is a feasible point of problem \eqref{pL1};
\item[(ii)] Suppose additionally that $\partial^\infty \Phi(x^*) \cap \cC=\{0\}$ and $A^T(Ax^*-b) \neq 0$ 
if $\|Ax^*-b\|=\sigma$, where  
\[
\cC := \{t A^T(b-Ax^*): t \ge 0\}.
\]
Then $x^*$ is a first-order stationary point of problem \eqref{pL1}, that is, there exists $\mu^* \ge 0$ such that 
\beq \label{KKT}
\ba{l}
 0 \in 2\mu^* A^T(Ax^*-b) + \partial \Phi(x^*), \\ [5pt]
 \|Ax^*-b\| \le \sigma, \quad\quad \mu^*(\|Ax^*-b\|-\sigma)=0.  
\ea
\eeq
\ei
\end{theorem}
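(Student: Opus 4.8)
The plan is to follow the template of the convergence proof just given for Algorithm~2, adapting it to the positive-part penalty $\tw$ in \eqref{tw} and to the scalar multiplier $\mu_k\ge 0$; the multiplier-control steps lean on the feasible augmented Lagrangian machinery of \cite{int:Lu}. The one computation to record at the outset is that, from \eqref{tw},
\beq \nn
\nabla_x\tw(x;\mu,\varrho)=2\,[\mu+\varrho(\|Ax-b\|^2-\sigma^2)]_+\,A^T(Ax-b),
\eeq
so that Step~2) of Algorithm~3 yields the clean identity $\nabla_x\tw(x^{k+1};\mu_k,\varrho_k)=2\mu_{k+1}A^T(Ax^{k+1}-b)$. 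Since $x^*$ is an accumulation point of $\{x^k\}$, it is also one of $\{x^{k+1}\}$, so I would fix a subsequence $\cK$ with $x^{k+1}\to x^*$ and take all limits along it, using throughout that $\Phi\ge 0$ (as $\phi\ge 0$ under Assumption~\ref{assump-phi}) and that $\{\varrho_k\}$ is nondecreasing.

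For statement (i) I would argue by the usual dichotomy on $\{\varrho_k\}$. If $\{\varrho_k\}$ is bounded, then since it can only increase by a factor $\ge\gamma>1$ it is eventually constant, so the first branch of Step~3) holds for all large $k$ and $[\|Ax^{k+1}-b\|-\sigma]_+\le\eta[\|Ax^{k}-b\|-\sigma]_+$ with $\eta\in(0,1)$ forces the feasibility residual to $0$; continuity then gives $\|Ax^*-b\|\le\sigma$. If $\{\varrho_k\}\to\infty$, I would instead use the second relation of \eqref{innerstopn}, which reads $\tfrac{1}{2\varrho_k}(\mu_{k+1}^2-\mu_k^2)+\Phi(x^{k+1})\le\Upsilon$. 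Writing $d_{k+1}:=\|Ax^{k+1}-b\|^2-\sigma^2$, on the indices where $d_{k+1}>0$ the update gives $\tfrac{1}{2\varrho_k}(\mu_{k+1}^2-\mu_k^2)=\mu_k d_{k+1}+\tfrac{\varrho_k}{2}d_{k+1}^2\ge\tfrac{\varrho_k}{2}d_{k+1}^2$, so $\Phi\ge 0$ yields $d_{k+1}\le\sqrt{2\Upsilon/\varrho_k}\to 0$; together with $[d_{k+1}]_+=0$ elsewhere this gives $[\,\|Ax^{k+1}-b\|^2-\sigma^2\,]_+\to 0$ and hence again $\|Ax^*-b\|\le\sigma$.

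For statement (ii), the first relation of \eqref{innerstopn} and the identity above furnish $\xi^k$ with $\|\xi^k\|\le\epsilon_k\to 0$ and $v^k\in\partial\Phi(x^{k+1})$ such that $\xi^k=2\mu_{k+1}A^T(Ax^{k+1}-b)+v^k$. I would then split on whether the constraint is active at $x^*$. If $\|Ax^*-b\|<\sigma$, then $d_{k+1}\to d^*<0$ along $\cK$ and the dual update $\mu_{k+1}=[\mu_k+\varrho_k d_{k+1}]_+$ drives $\mu_{k+1}\to 0$ along a subsequence; since $A^T(Ax^{k+1}-b)$ is bounded, $v^k=\xi^k-2\mu_{k+1}A^T(Ax^{k+1}-b)\to 0$, and outer semicontinuity of $\partial\Phi$ gives $0\in\partial\Phi(x^*)$, so $\mu^*=0$ satisfies \eqref{KKT}. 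If $\|Ax^*-b\|=\sigma$, then $A^T(Ax^*-b)\ne 0$ by hypothesis, and I would first show $\{v^k\}$ is bounded: otherwise, passing to a subsequence with $\|v^k\|\to\infty$ and $v^k/\|v^k\|\to v\ne 0$ (so $v\in\partial^\infty\Phi(x^*)$), dividing the identity by $\|v^k\|$ and using $A^T(Ax^{k+1}-b)\to A^T(Ax^*-b)\ne 0$ would give $v=t^*A^T(b-Ax^*)\in\cC$ for some $t^*\ge 0$, contradicting $\partial^\infty\Phi(x^*)\cap\cC=\{0\}$. With $\{v^k\}$ bounded, $2\mu_{k+1}A^T(Ax^{k+1}-b)=\xi^k-v^k$ is bounded, and because $A^T(Ax^*-b)\ne 0$ this bounds the scalars $\{\mu_{k+1}\}$; passing to a further subsequence with $\mu_{k+1}\to\mu^*\ge 0$ and $v^k\to v^*\in\partial\Phi(x^*)$ yields $0=2\mu^*A^T(Ax^*-b)+v^*$, while the complementarity in \eqref{KKT} is automatic since $\|Ax^*-b\|=\sigma$.

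I expect the main obstacle to be the multiplier control in statement (ii). In the active case the constraint qualification $\partial^\infty\Phi(x^*)\cap\cC=\{0\}$ keeps $\{v^k\}$ bounded, and the nondegeneracy $A^T(Ax^*-b)\ne 0$ then bounds the scalar multipliers along a single direction, which is in fact cleaner than the Hoffman-based reconstruction needed in the noiseless proof. The genuinely delicate point is the inactive case: because $x^{k+1}\to x^*$ only along $\cK$, showing $\mu_{k+1}\to 0$ cannot be read off from a single step of the dual update and instead requires the balancing between $\{\varrho_k\}$ and $\{\mu_k\}$ enforced by Step~3), exactly as in the feasible augmented Lagrangian analysis of \cite{int:Lu}. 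Verifying complementarity, rather than the simpler exact feasibility $Ax^*=b$ of the noiseless case, is the other feature that distinguishes this argument.
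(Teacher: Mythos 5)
Your proposal is correct and follows essentially the same route as the paper's proof: the same dichotomy on the boundedness of $\{\varrho_k\}$ for part (i), and the same active/inactive split for part (ii), with the constraint qualification $\partial^\infty\Phi(x^*)\cap\cC=\{0\}$ used through the same blow-up contradiction. Your two departures are cosmetic and both sound: in part (i) discarding the cross term $\mu_k d_{k+1}\ge 0$ gives $[d_{k+1}]_+\le\sqrt{2\Upsilon/\varrho_k}$ directly (the paper instead routes this bound through \eqref{lag-penn}); and in the active case of part (ii) you bound $\{v^k\}$ first and then the multipliers, whereas the paper supposes $\mu_{k+1}\to\infty$ and obtains $v^k/\mu_{k+1}\to-2A^T(Ax^*-b)$, a nonzero element of $\partial^\infty\Phi(x^*)\cap\cC$ --- the two contradictions are identical in substance. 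The one step you leave as an assertion is the inactive case, $\mu_{k+1}\to 0$; the paper closes it in two lines. Whenever the penalty is increased, Step 3) enforces $\varrho_{k+1}\ge\mu_{k+1}^{1+\theta}$, which yields $0\le\varrho_k^{-1}\mu_k\le\varrho_k^{-\theta/(1+\theta)}\to 0$ (this is \eqref{lag-penn}); then writing Step 2) as
\[
\mu_{k+1}=\varrho_k\left[\varrho_k^{-1}\mu_k+\|Ax^{k+1}-b\|^2-\sigma^2\right]_+
\]
and using $\|Ax^{k+1}-b\|^2-\sigma^2\le\hat\sigma^2-\sigma^2<0$ for all large $k\in\cK$ (such $\hat\sigma<\sigma$ exists by strict inactivity), the bracket is eventually negative, so $\mu_{k+1}=0$ exactly, not merely in the limit, for large $k\in\cK$; then $v^k=\xi^k-2\mu_{k+1}A^T(Ax^{k+1}-b)\to 0$ and outer semicontinuity give $0\in\partial\Phi(x^*)$, as you indicate. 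Your caution about this point is in fact warranted beyond what the paper acknowledges: \eqref{lag-penn} is derived under $\varrho_k\to\infty$, so when $\{\varrho_k\}$ stays bounded one must argue differently (the Step 3) test then eventually always passes, the residuals $[\|Ax^j-b\|-\sigma]_+$ decay geometrically, hence the cumulative increase of $\mu_j$ over all iterations is finite, while each iteration in $\cK$ decreases $\mu$ by at least $\varrho_0(\sigma^2-\hat\sigma^2)$ or zeroes it, forcing $\mu_{k+1}=0$ eventually along $\cK$); with that supplement your argument, like the paper's, is complete.
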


\begin{proof} 
Suppose that $x^*$ is an accumulation point of $\{x^k\}$. Considering a convergent subsequence if 
necessary, assume for convenience that $x^k\to x^*$ as $k\to \infty$. 

(i) We show that $x^*$ is a feasible point of problem \eqref{p2}, that is, $\|Ax^*-b\| \le \sigma$, by considering two separate cases as follows. 

Case (a): $\{\varrho_k\}$ is bounded. This together with Step 3) of Algorithm 3 implies that  
\[
[\|Ax^{k+1}-b\|-\sigma]_+\leq\eta[\|Ax^k-b\|-\sigma]_+
\]
holds for all sufficiently large $k$. It then follows from this and $x^k\to x^*$  that $[\|Ax^*-b\|-\sigma]_+=0$ and hence 
$\|Ax^*-b\| \le \sigma$. 

Case (b): $\{\varrho_k\}$ is unbounded. Considering a subsequence if necessary, assume for 
convenience that $\{\varrho_k\}$ strictly increases and $\varrho_k \to \infty$ as $k\to\infty$. This along with 
Step 3) of Algorithm 2 implies that 
$\varrho_{k+1}=\max\{\gamma\varrho_k,\|\mu^{k+1}\|^{1+\theta }\}$ holds for all $k$. It then follows from this and $\rho_k \to \infty$ that
\beq \label{lag-penn}
0 \le \varrho_k^{-1}\mu_k\le \varrho_k^{-\theta/(1+\theta)}\to 0.
\eeq
In view of \eqref{auglagn} and the second relation in \eqref{innerstopn}, one has 
\begin{equation*}
\frac{1}{2\varrho_k}\left(\left[\mu_k+\varrho_k\left(\|Ax^{k+1}-b\|^2
-\sigma^2\right)\right]_+^2-\mu^2_k\right) + \Phi(x^{k+1}) \leq \Upsilon.
\end{equation*}
It follows that
\[
\left(\varrho_k^{-1}\mu_k+\|Ax^{k+1}-b\|^2
-\sigma^2\right)_+^2 \leq 2\varrho_k^{-1}[\Upsilon-\Phi(x^{k+1})] + \varrho^{-2}_k\mu^2_k. 
\]
Taking limits on both sides of this relation as $k\to\infty$, and using \eqref{lag-penn}, $x^k\to x^*$, $\varrho_k\to\infty$ and the lower 
boundedness of $\Phi$, we obtain $(\|Ax^*-b\|^2-\sigma^2)_+=0$, 
which clearly yields $\|Ax^*-b\| \le \sigma$.

(ii) We show that $x^*$ is a first-order stationary point of problem \eqref{pL1}. It follows from \eqref{tw}, the first relation in \eqref{innerstopn} and Step 2) of Algorithm 3 that there exist some $\xi^k\in \Re^n$ and $v^k \in \partial \Phi(x^{k+1})$ such that $\|\xi^k\| \leq\epsilon_k$ and  
\begin{equation}\label{linear systemn}
 \xi^k=2\mu_{k+1} A^T(Ax^{k+1}-b)+v^k.
\end{equation}
since $\|\xi^k\|\le \epsilon_k$ and $\epsilon_k \to 0$, we have $\|\xi^k\| \to 0$. We next consider two separate cases to complete the rest of the proof.

Case (a): $\|Ax^*-b\| < \sigma$. This together with $x^k \to x^*$ implies 
that $\|Ax^k-b\| \le \hat\sigma$ for some  $\hat\sigma<\sigma$ and 
all sufficiently large $k$. It then follows from this, \eqref{lag-penn} and 
Step 2) of Algorithm 3 that for all sufficiently large $k$,
\[
\mu_{k+1}=[\mu_k+\varrho_k(\|Ax^{k+1}-b\|^2-\sigma^2)]_+ = \varrho_k [\varrho^{-1}_k\mu_k+\|Ax^{k+1}-b\|^2-\sigma^2]_+ = 0.
\]
This together with \eqref{linear systemn} and $\xi^k \to 0$ implies $v^k \to 0$. It follows from this, $v^k \in \partial \Phi(x^{k+1})$, $x^k\to x^*$ and 
the semi-continuity of $\partial \Phi$ that $0\in\partial \Phi(x^*)$. Hence, \eqref{KKT} holds with $\mu^*=0$. 

Case (b): $\|Ax^*-b\| = \sigma$. By this and  the assumption, we know  $A^T(Ax^*-b)\neq0$. Claim that 
 $\{\mu_{k+1}\}$ is bounded. Suppose for contradiction that $\{\mu_{k+1}\}$ is unbounded. Assume without loss of generality that 
 $\mu_{k+1} \to \infty$. Recall that $x^k \to x^*$ and $\xi^k \to 0$. Dividing both sides of  
 \eqref{linear systemn} by $\mu_{k+1}$ and taking limits as $k\to\infty$, we obtain 
\[
\lim\limits_{k\to\infty} v^k/\mu_{k+1}= -2A^T(Ax^*-b).
\]
This together with $v^k\in\partial \Phi(x^k)$, $x^k\to x^*$ and $\mu_{k+1}\to\infty$ implies 
$-2A^T(Ax^*-b)\in\Phi^\infty(x^*)$ and hence $-2A^T(Ax^*-b)\in\Phi^\infty(x^*)\cap\cC$, which 
contradicts the assumption $\Phi^\infty(x^*)\cap\cC=\{0\}$ due to $A^T(Ax^*-b)\neq0$. Therefore,   
 $\{\mu_{k+1}\}$ is bounded. This together with \eqref{linear systemn} and 
 the boundedness of $\{x^k\}$ and $\{\xi^k\}$ implies that $\{v^k\}$ is bounded. Considering 
 a convergent subsequence if necessary, assume for convenience that 
  $(\mu_{k+1},v^k) \to (\mu^*,v^*)$. Using this, $x^k\to x^*$, $\xi^k\to 0$, and 
  taking limits on both sides of \eqref{linear systemn} as $k\to\infty$, we obtain that 
 \[
 2\mu^*A^T(Ax^*-b) + v^* = 0.
 \]
 Notice $v^*\in\partial\Phi(x^*)$. Hence, the first relation of \eqref{KKT} holds with such $\mu^*$. 
 In addition, the other two relations of \eqref{KKT} hold due to 
the assumption $\|Ax^*-b\| = \sigma$. This completes the proof.
\end{proof}

\section{Numerical results}
\label{results}

In this section we conduct numerical experiments to test the performance 
of the partially regularized models studied in this paper. Due to the paper 
length limitation, we only present the computational results for the partial $\ell_1$ regularized model, that is, $\phi(\cdot)=|\cdot|$. We also compare the performance of this model with some existing models. 
The codes for them are written in Matlab.  All computations are performed by Matlab R2015b running on a Dell desktop with a 3.40-GHz Intel Core i7-3770 processor and 16 GB of RAM.  

\subsection{Sparse logistic regression}\label{expmt1}

Sparse logistic regression has numerous applications in machine learning, computer vision, data mining, bioinformatics and signal processing 
(e.g., see \cite{Koh2007,LuZh13}). Given $m$ samples $\{a^1,\ldots, a^m\}$ with $n$ features, and $m$ binary outcomes $b_1,\ldots,b_m$, the goal of sparse logistic regression is to seek a sparse vector to minimize the {\it average logistic loss} function that is defined as
\[
l_{\avg}(w):= \frac{1}{m}\sum\limits_{i=1}^m \log(1+\exp(-b_i w^T a^i)) \quad\quad w\in \Re^n.
\]
This problem can be formulated as 
\beq
\label{slr}
\min_w \{l_{\avg}(w):\|w\|_0 \leq r\}, 
\eeq
where $r \in [1,n]$ is some integer for controlling the sparsity of the solution. 
Given that problem \eqref{slr} is generally hard to solve, one popular approach for finding an approximate solution of  
\eqref{slr} is by solving the full $\ell_1$ regularized problem:
\beq\label{l1}
\min_w l_{\avg}(w)+\lambda \sum\limits_{i=1}^n|w|_i
\eeq
for some $\lambda > 0$ (e.g., see \cite{Koh2007}).  Since the full $\ell_1$ regularization is typically biased,  we are interested in finding an approximate solution of \eqref{slr} by solving the partial $\ell_1$ regularized model:
\beq\label{pl1}
\min_w l_{\avg}(w)+\hat{\lambda} \sum\limits_{i=r+1}^n|w|_{[i]} 
\eeq
for some $\hat{\lambda} > 0$. Our aim below is to compare the performance of 
models \eqref{l1} and \eqref{pl1}. In particular, we solve them with suitable $\lambda$ and $\hat \lambda$ to find a solution with the same cardinality and then compare their respective average logistic loss. 

We apply Algorithm 1 (the NPG method) presented in Subsection \ref{npg-method} to solve problems \eqref{l1} and \eqref{pl1}. In our implementation of Algorithm 1, we set $L_{\min}=10^{-8}$, 
$L_{\max}=10^{8}$, $\tau=2$, $c=10^{-4}$ and $N=5$. In addition, we choose the initial point to be zero and set $L_k^0$ according to \eqref{Lk0}. We use \eqref{dist-x} as the termination criterion for Algorithm 1 and set the associated accuracy parameter $\epsilon$ to be $10^{-5}$.

In the first experiment, we compare the solution quality of models \eqref{l1}  and \eqref{pl1} on the random data of six different sizes. 
 For each size, we generate 10 instances in which the samples $\{a^1,\ldots, a^m\}$ and their outcomes $b_1,\ldots,b_m$ are generated in the same manner as described in \cite{Koh2007}. In detail, for each instance we choose an equal number of positive and negative samples, that is, $m_+ = m_- = m/2$, where $m_+$ (resp., $m_-$) is the number of samples with outcome $1$ (resp., $-1$). The features of positive (resp., negative) samples are independent and identically distributed, drawn from a normal distribution $N(\mu,1)$, where $\mu$ is in turn drawn from a uniform distribution on $[0,1]$ (resp., $[-1,0]$). For each instance, we first apply Algorithm 1 to solve \eqref{l1} with four different values of $\lambda$, which are $0.5\lambda_{\max}$, $0.25\lambda_{\max}$, $0.1\lambda_{\max}$ and $0.01\lambda_{\max}$, where $\lambda_{\max} = \|\sum^m_{i=1} b_i a^i\|_{\infty}/(2m)$. It is not hard to verify that $w=0$ is an optimal solution of \eqref{l1} for all $\lambda \ge \lambda_{\max}$ and thus $\lambda_{\max}$ is the upper bound on the useful range of $\lambda$.  For each $\lambda$, let $K$ be the cardinality of the approximate solution of \eqref{l1} found by Algorithm 1. Then we apply Algorithm 1 to solve \eqref{pl1} respectively with $r = \lceil 0.1K \rceil,\ \lceil 0.2K \rceil,\ \ldots,\ \lceil 0.9K \rceil,\ K$ and some $\hat{\lambda}$ (depending on $r$) so that the resulting approximate solution has cardinality no greater than $K$. 
The average $l_{\avg}$ over 10 instances of each size for different  $\lambda$ and $r$ is presented in Figure \ref{f1}. The result of model \eqref{l1} corresponds to the part of this figure with $r=0$. We can observe that model \eqref{pl1} with the aforementioned positive $r$ substantially outperforms model \eqref{l1} in terms of the solution quality since it generally achieves lower average logistic loss while the sparsity of their solutions is similar. In addition, the average logistic loss of for model \eqref{pl1} becomes smaller as $r$ gets closer to $K$, which indicates more alleviation on the bias of the solution.
%

\begin{figure}
\centering
\subfigure{
\includegraphics[scale=0.415]{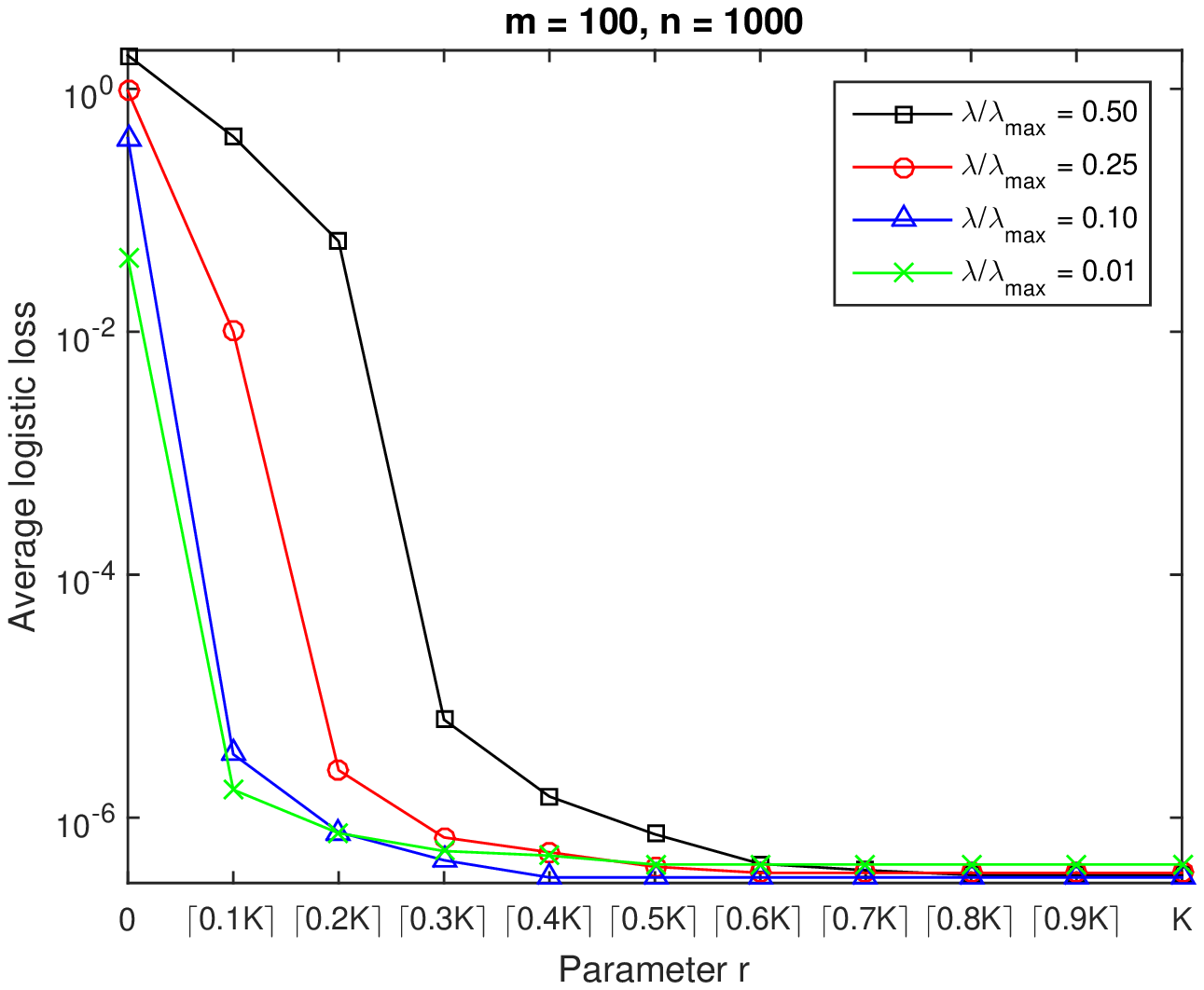}
\includegraphics[scale=0.415]{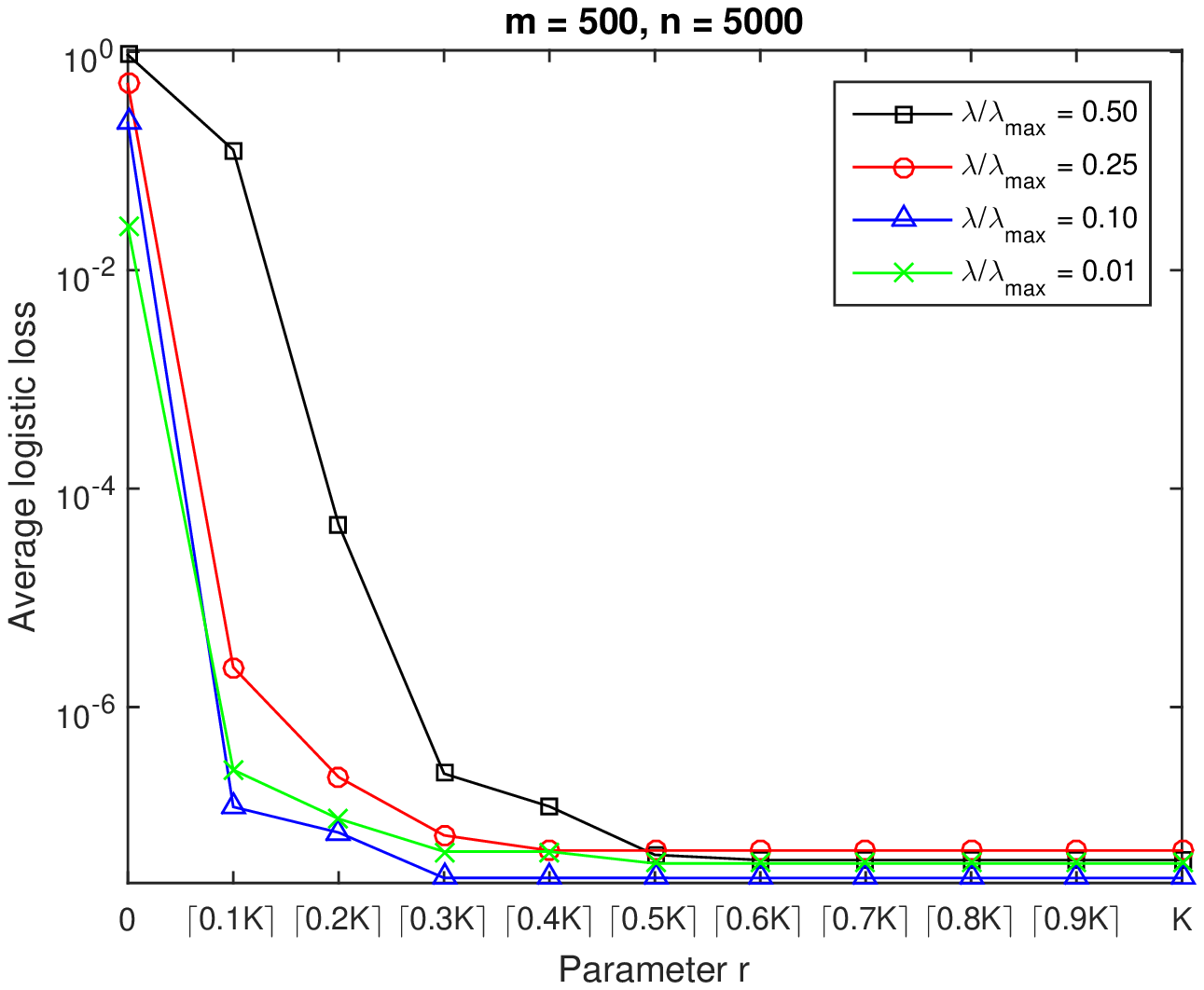}
}
\subfigure{
\includegraphics[scale=0.415]{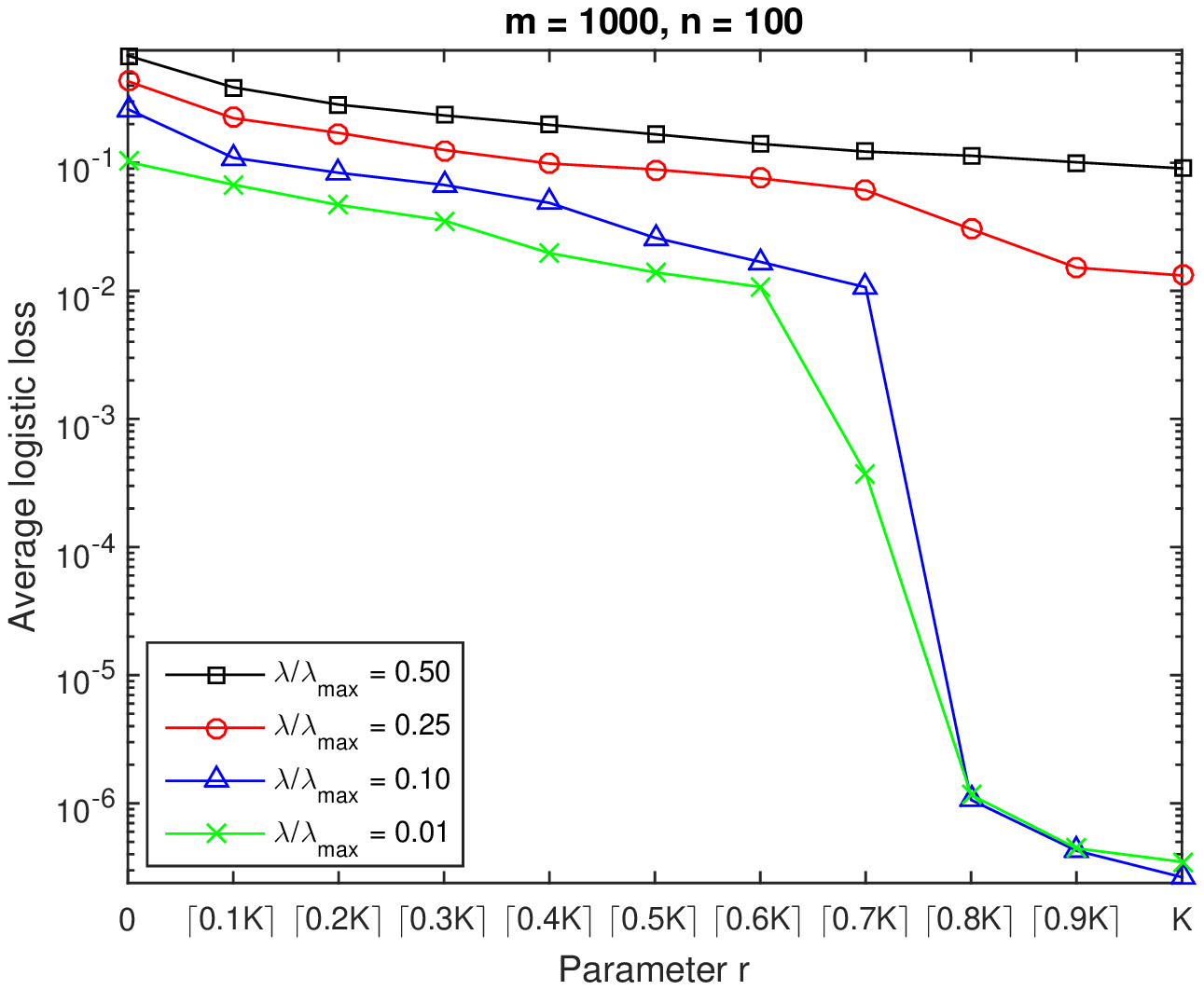}
\includegraphics[scale=0.415]{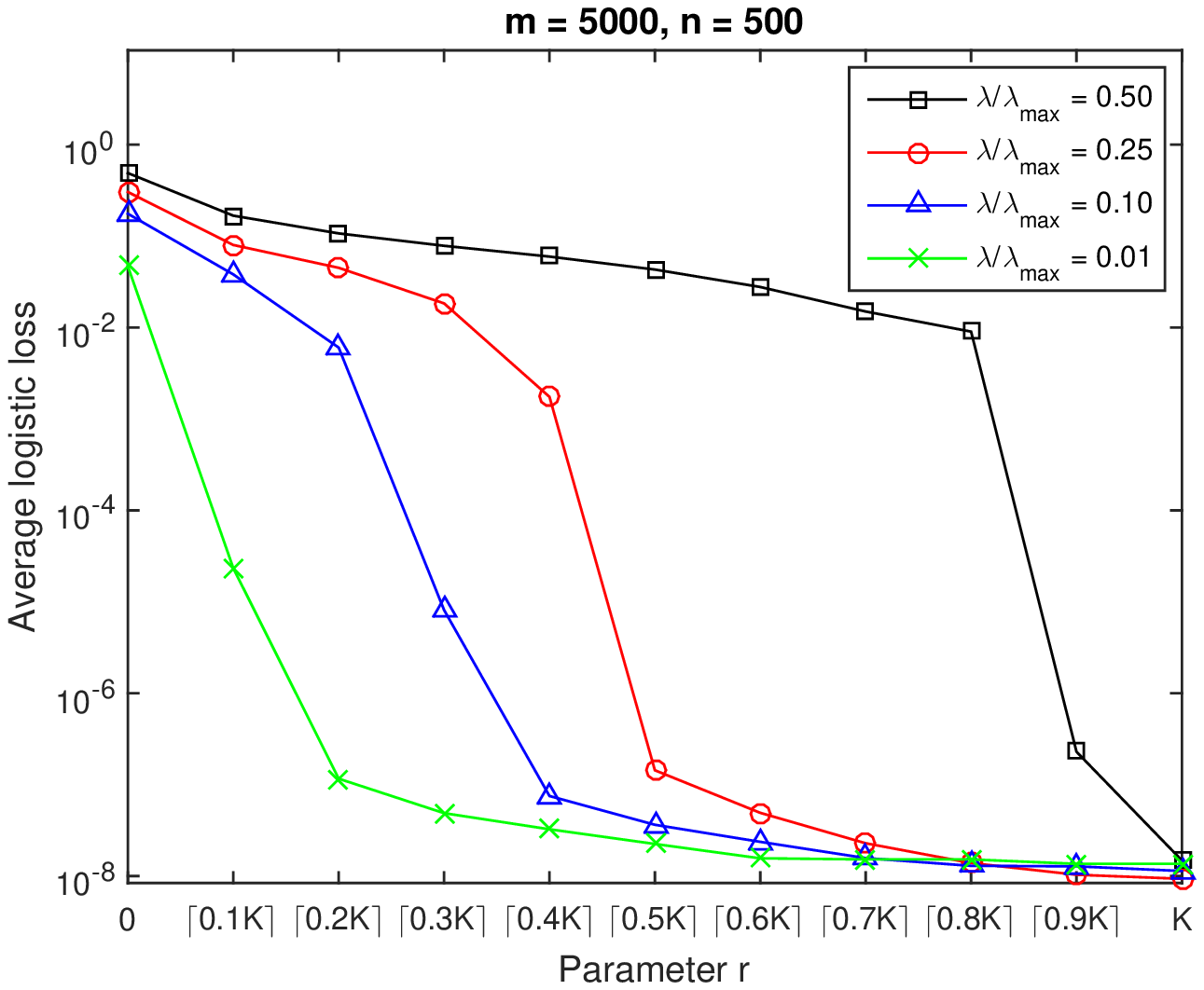}
}
\subfigure{
\includegraphics[scale=0.415]{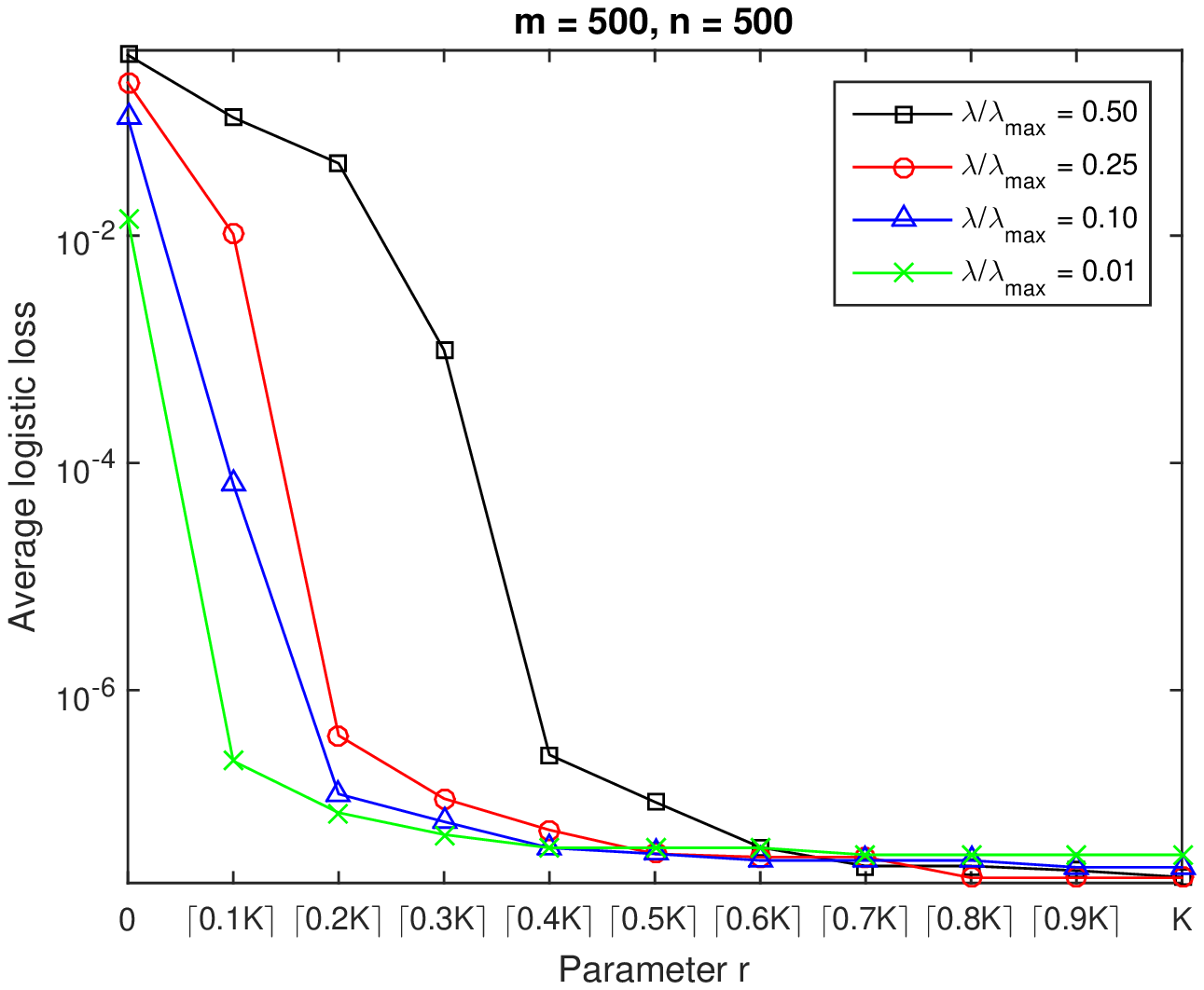}
\includegraphics[scale=0.415]{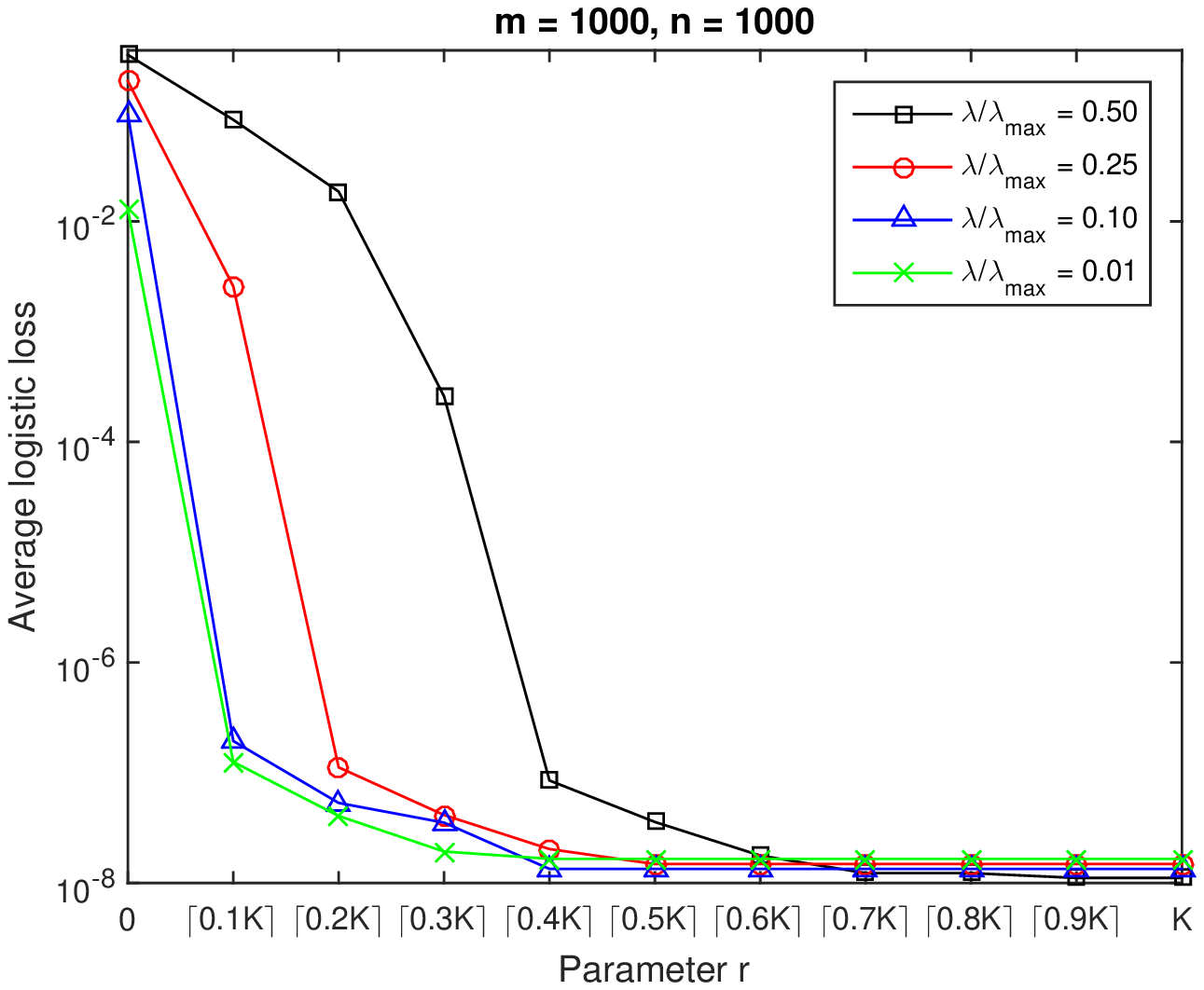}
}
\caption{Computational results on six random problems}
\label{f1}
\end{figure}

In the second experiment, we compare the solution quality of models \eqref{l1} and \eqref{pl1} on three small- or medium-sized benchmark data sets which are from the UCI machine learning bench market repository \cite{Lichman2013} and other sources \cite{Golub1996}. The first data set is the colon tumor gene expression data \cite{Golub1996} with more features than samples; the second one is the ionosphere data \cite{Lichman2013} with fewer features than samples; and the third one is the musk data \cite{Lichman2013} where we take as many samples as features. We discard the samples with missing data, and standardize each data set so that the sample mean is zero and the sample variance is one. We repeat the above experiment for these data sets and present the computational results in Figure \ref{f2}. One can see that the solution of model \eqref{pl1} achieves lower average logistic loss than that of model \eqref{l1} while they have same cardinality. In addition, the average logistic loss of model \eqref{pl1} decreases as $r$ increases.

\begin{figure}[t!]
\centering
\subfigure{
\includegraphics[scale=0.415]{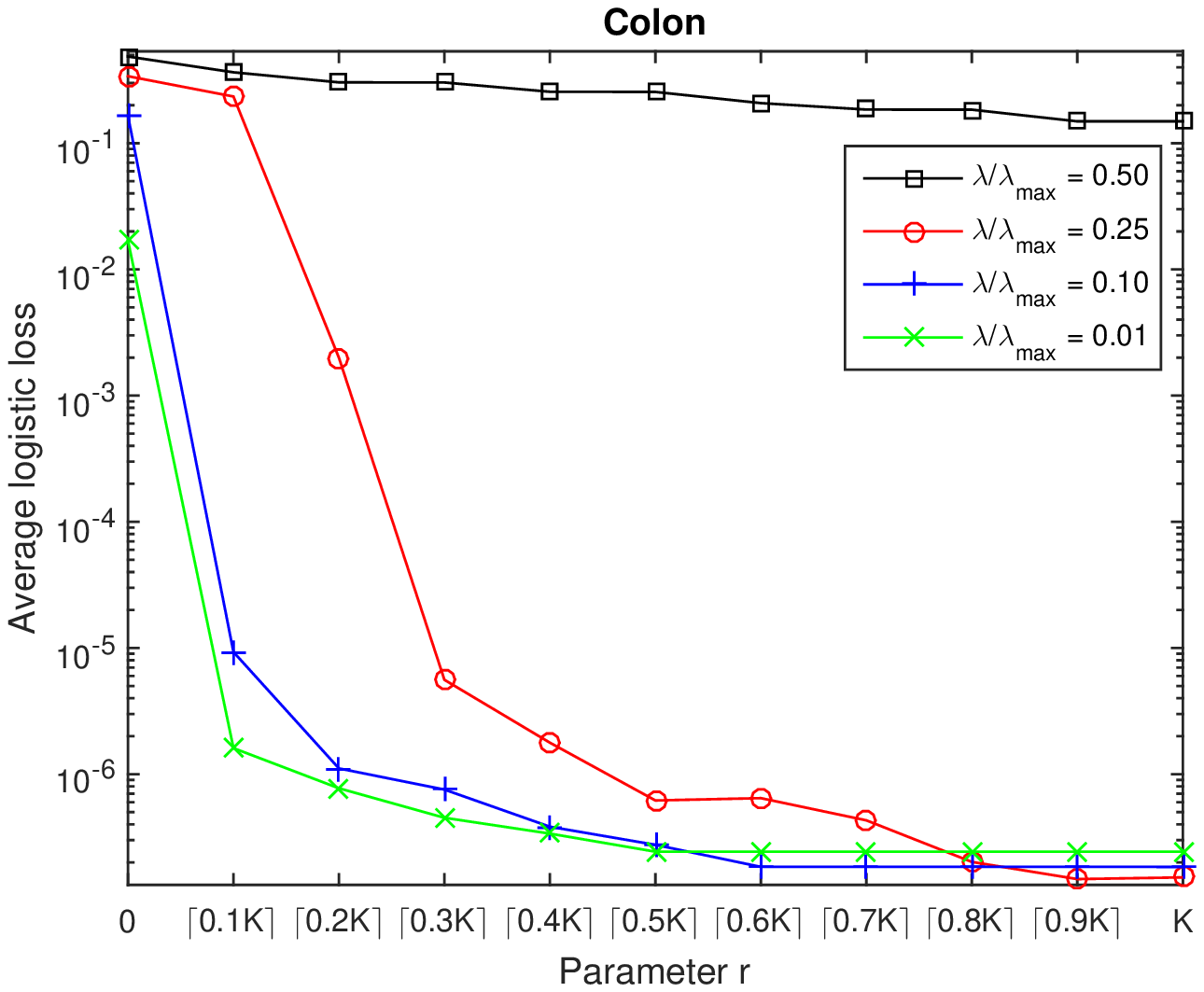}
}
\subfigure{
\includegraphics[scale=0.415]{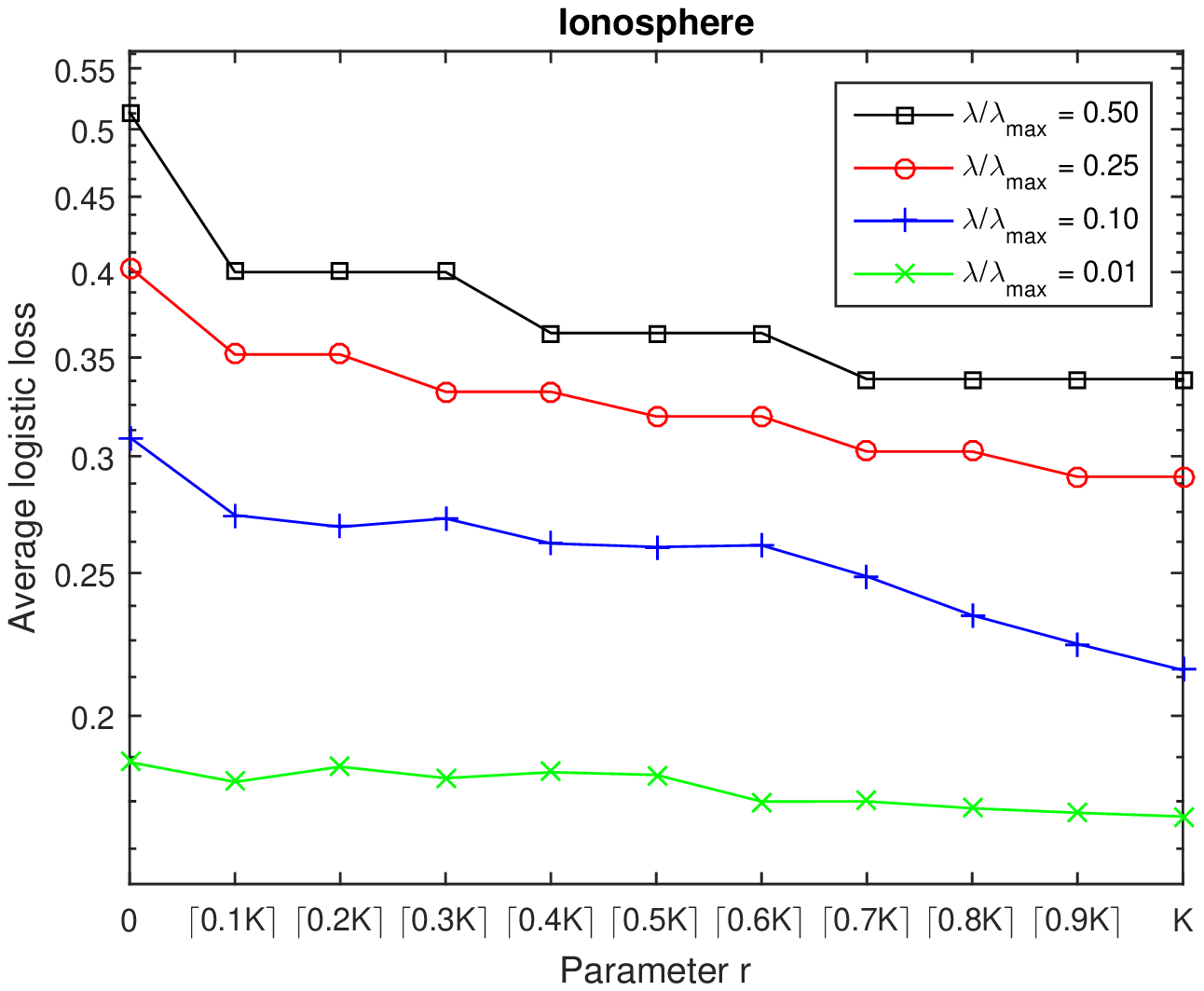}
\includegraphics[scale=0.415]{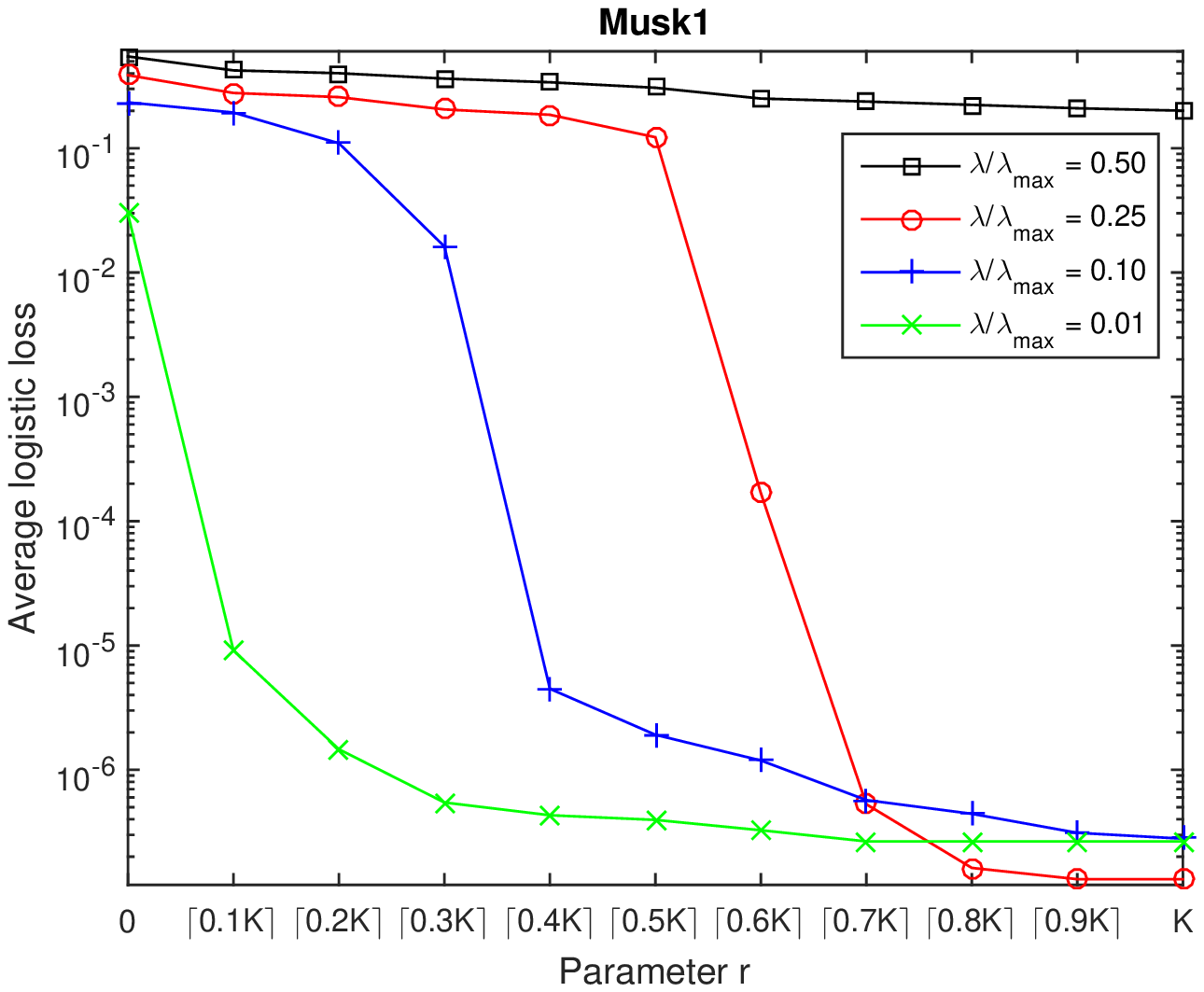}
}
\caption{Computational results on three real data sets}
\label{f2}
\end{figure}

\subsection{Sparse recovery for a linear system}

In this subsection we consider finding a sparse solution to a linear system, which can be formulated as
\beq \label{lsl0}
\min_x \left\{ \|x\|_0: \|Ax-b\|^2 \leq \sigma^2 \right\},
\eeq
where $A \in \Re^{m \times n}$, $b\in \Re^n$ and $\sigma \geq 0$ is 
the noise level (that is, $\sigma=0$ for the noiseless case and $\sigma>0$ for the noisy case). 
Given that problem \eqref{lsl0} is typically hard to solve, one common approach in the literature for finding an approximate solution of \eqref{lsl0} is by solving the model:
\beq \label{Lp}
\min_{x} \left\{ \sum\limits_{i=1}^n \phi(|x|_i): \|Ax-b\|^2 \leq \sigma^2 \right\}
\eeq
for some regularizer $\phi:\Re_+ \to \Re_+$. 
Our aim below is to compare the performance of model \eqref{Lp} with the partial $\ell_1$ regularized model
\beq \label{lspl1}
\min_x \left\{ \sum\limits_{i=r+1}^n|x|_{[i]}: \|Ax-b\|^2 \leq \sigma^2 \right\}
\eeq
for some integer $r>0$. In our comparison, the particular $\phi$ for \eqref{Lp} is chosen to be the  regularizers $\ell_1$, $\ell_q$, Log, Capped-$\ell_1$, MCP and SCAD, which are presented in Section \ref{intro}. The associated parameters for them are chosen as  $q=0.5$, $\varepsilon=10^{-3}$,  $\nu=10^{-2}$, $\lambda=1$, $\alpha=2.7$ and $\beta=3.7$ that are commonly used in the literature. 

We apply FAL methods (Algorithms 2 and 3) proposed in Subsections \ref{noiseless-sys} and \ref{noisy-sys}  to solve  both \eqref{Lp} and \eqref{lspl1} with $\sigma=0$ and $\sigma>0$, respectively. 
We now address the initialization and termination criterion for Algorithms 2 and 3. In particular, we set $x^\feas=A\backslash b$, $x^0=0$, $\mu^0=0$, $\mu_0=0$, $\rho_0=1$, $\gamma=5$, $\eta=0.25$, $\theta=10^{-2}$,  $\epsilon_0=1$ and $\epsilon_k = \max\{0.1\epsilon_{k-1},10^{-5}\}$ for $k\ge 1$. In addition, we choose 
the termination criterion $\|Ax^k-b\|_{\infty} \le 10^{-5}$ and $\epsilon_k \le 10^{-4}$  for Algorithm 2, and $[\|Ax^k-b\| _{\infty}-\sigma]_+ \le 10^{-5}$ and $\epsilon_k \le 10^{-4}$ for Algorithm 3. The augmented Lagrangian subproblems arising in FAL are solved by Algorithm 1, whose  parameter 
setting and termination criterion are the same as those specified in Subsection \ref{expmt1}. 

We next conduct numerical experiments to compare the performance of models \eqref{Lp} and \eqref{lspl1} for the aforementioned $\phi$. In the first experiment, we compare them on some noiseless random data that is generated 
in the same manner as described in \cite{FoLa09}.
We set $m=128$ and $n=512$. For each $K\in \{1,2,\ldots,64\}$,  we first randomly generate $100$ pairs of $K$-sparse vector $x^*\in\Re^n$ and  $m \times n$ data matrix $A$. In particular, we randomly choose $K$ numbers from $\{1,2,\ldots,512\}$ as the support for $x^*$ and then generate the nonzero entries of $x^*$ and matrix $A$ according to the standard Gaussian distribution. Finally we orthonormalize the rows of $A$ and set $b=Ax^*$.  We apply Algorithm 2 to solve models \eqref{Lp} and \eqref{lspl1} with $r = \lceil 0.1K \rceil,\ \lceil 0.2K \rceil,\ \ldots,\ \lceil 0.9K \rceil,\ K$, $\sigma=0$ and the aforementioned $\phi$. It was observed in our experiment that all models can exactly recover $x^*$ for $K\in \{1,2,\ldots,19\}$. Therefore, we will not report the results for these $K$. To evaluate the solution quality for the rest of $K$, we adopt a criterion as described in 
\cite{CaWaBo08}.
We say that a sparse vector $x$ is {\it successfully recovered} by an estimate $\hat{x}$ if $\|x-\hat{x}\|< 10^{-3}$. 
The computational results for the instances generated above are plotted in Figure \ref{f4}. In detail, the average frequency of success over all instances for model \eqref{lspl1} with  $r = \lceil 0.1K \rceil,\ \lceil 0.2K \rceil,\ \ldots,\ \lceil 0.9K \rceil,\ K$ against $K\in \{20,21,\ldots,64\}$ is presented in the first subfigure.  The average frequency of success of model \eqref{lspl1} with  $r = K$ and model \eqref{Lp} with the aforementioned $\phi$ against $K\in \{20,21,\ldots,64\}$ is plotted in the second subfigure, where the partial $\ell_1$ represents model \eqref{lspl1} with  $r = K$ and the others represent model \eqref{Lp} with various $\phi$, respectively. Analogously, the accumulated CPU time over all instances for each model against $K\in \{20,21,\ldots,64\}$ is displayed in the last subfigure. One can observe from the first subfigure that for each $K$, the average frequency of success of model \eqref{lspl1} becomes higher as $r$ increases. In addition, from the last two subfigures, we can see that for each $K\in \{20,21,\ldots,64\}$, model \eqref{lspl1} with $r=K$ generally has higher average frequency of success than and comparable average CPU time to model \eqref{Lp} with various $\phi$.


\begin{figure}[t!]
\centering
\subfigure[Frequency of success for model \eqref{lspl1}]{
\includegraphics[scale=0.408]{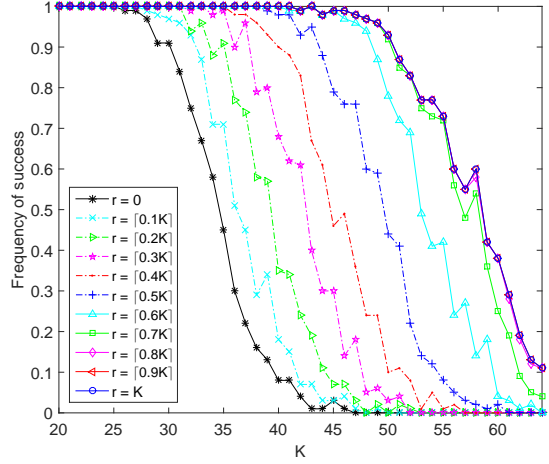}}
\\
\subfigure[Frequency of success for models \eqref{Lp} and \eqref{lspl1}]{
\includegraphics[scale=0.408]{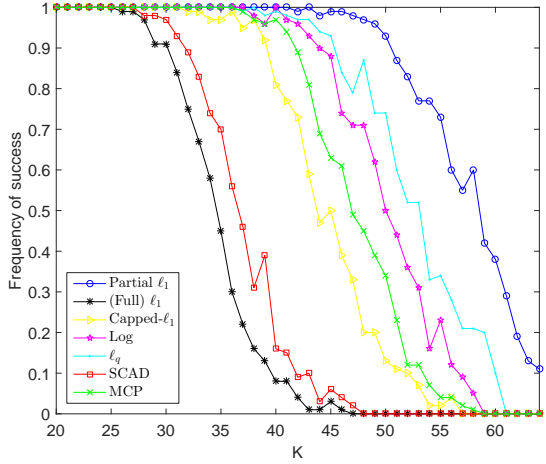}}
\subfigure[Average CPU time for models \eqref{Lp} and \eqref{lspl1}]{
\includegraphics[scale=0.408]{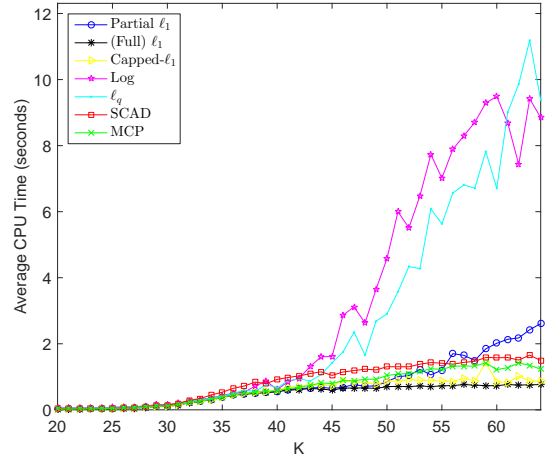}}
\caption{Computational results on random noiseless data}
\label{f4}
\end{figure}

In the second experiment, we compare the performance of models \eqref{Lp} and \eqref{lspl1} with the aforementioned $\phi$ on some noisy random data. For each $K\in \{1,2,\ldots,64\}$,  we generate $100$ pairs of $K$-sparse vector $x^*\in\Re^n$ and $m \times n$ data matrix $A$ in the same manner as above. For each pair of $x^*$ and $A$, we set $b=Ax^*+\xi$ and $\sigma=\|\xi\|$, where $\xi$ is drawn from a normal distribution with mean $0$ and variance $10^{-4}$. For each such instance, we apply Algorithm 3 to solve the models \eqref{Lp} and \eqref{lspl1} and compute their corresponding relative error according to $\text{rel\_err}=\|\hat{x}-x\|/\|x\|$, which evaluates how well a sparse vector $x$ is recovered by an estimate $\hat{x}$ 
as described in \cite{CaWaBo08}. The computational results of this experiment are presented in Figure \ref{f5}.
We plot the average relative error over all instances for model \eqref{lspl1} with $r = \lceil 0.1K \rceil,\ \lceil 0.2K \rceil,\ \ldots,\ \lceil 0.9K \rceil,\ K$ against $K$ in the first subfigure. The average relative error of model \eqref{lspl1} with $r=K$ and \eqref{Lp} with the aforementioned $\phi$ against $K$ is displayed in the second  subfigure, where the partial $\ell_1$ represents model \eqref{lspl1} with  $r = K$ and the others represent model \eqref{Lp} with various $\phi$, respectively. Similarly, the accumulated CPU time over all instances for each model against $K$ is presented in the last subfigure. One can see from the first subfigure that for each $K$, the average relative error of model \eqref{lspl1} becomes lower as $r$ increases. From the second subfigure, we can observe that when $0<K<38$, the average relative error of  model \eqref{lspl1}  with $r=K$ is comparable to that of model \eqref{Lp} with $\phi$ chosen as $\ell_q$, Log, and Capped-$\ell_1$,  while it is lower than the one with $\phi$ chosen as $\ell_1$, SCAD and MCP.  Nevertheless,  when $38\leq K \leq 64$, model \eqref{lspl1}  with $r=K$ has lower 
average relative error than the other models. In addition, as seen from the last subfigure, model \eqref{lspl1} with $r=K$ generally has comparable  average CPU time to model \eqref{Lp} with various $\phi$.

\begin{figure}[t!]
\centering
\subfigure[Average relative error for model \eqref{lspl1}]{
\includegraphics[scale=0.408]{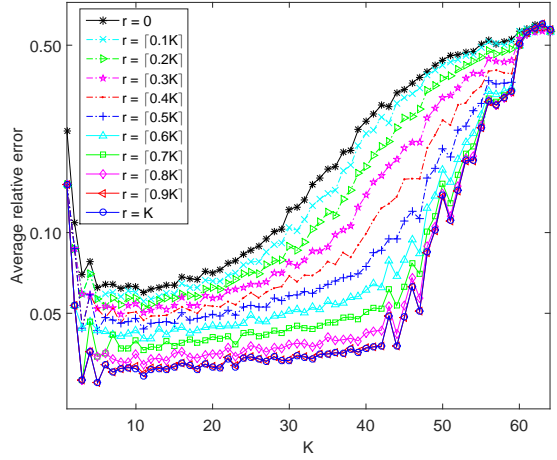}}
\\
\subfigure[Average relative error for models \eqref{Lp} and \eqref{lspl1}]{
\includegraphics[scale=0.408]{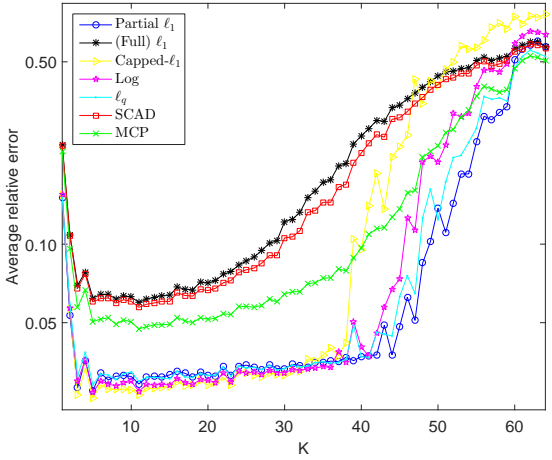}}
\subfigure[Average CPU time for models \eqref{Lp} and \eqref{lspl1}]{
\includegraphics[scale=0.408]{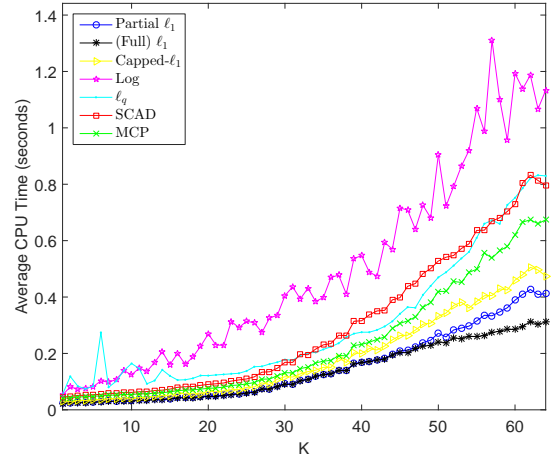}}
\caption{Computational results on random noisy data}
\label{f5}
\end{figure}

\section{Concluding remarks}
\label{remarks}

In this paper we proposed a class of models with partial regularization for recovering a sparse solution of a linear system. We studied some theoretical properties for these models including sparsity inducing, local or global recovery and also stable recovery. 
We also developed an efficient first-order feasible augmented Lagrangian method for solving them whose subproblems are solved by a nonmonotone proximal gradient 
method. The global convergence of these methods were also established. Numerical results on compressed sensing and sparse logistic regression demonstrate that our proposed models substantially outperform the widely used ones in the literature in terms of solution quality.
      
It shall be mentioned that the partially regularized model has recently been proposed in 
\cite{Gao10,GaSu10} for finding a low-rank matrix lying in a certain convex subset of positive semidefinite cone. In particular, the function $\sum^n_{i=r+1} \lambda_i(X)$ was used       
as a partial regularizer there, where $0 <r <n$ and $\lambda_i(X)$ is the $i$th largest 
eigenvalue of an $n \times n$ positive semidefinite matrix $X$. Clearly, this regularizer can be generalized to a class of partial regularizers in the form of $\sum^l_{i=r+1}\phi(\sigma_i(X))$, where $\sigma_i(X)$ is the $i$th largest singular value of a 
$m \times n$ matrix $X$, $l=\min\{m,n\}$ and $\phi$ satisfies Assumption \ref{assump-phi}. Most results of this paper can be generalized to the following model for finding a low-rank matrix:
\[
\min\limits_{X\in\Re^{m\times n}} \left\{\sum^l_{i=r+1}\phi(\sigma_i(X)): \|{\cal A}(X)-b\| \le \sigma \right\},
\]
where ${\cal A}: \Re^{m\times n} \to \Re^p$ is a linear operator, $b\in\Re^p$ and $\sigma \ge 0$. 
This is beyond this paper and left as a future research.

\end{document}